\title{Metric vs topological receptive entropy of semigroup actions}
\author{Andrzej Bi\'s\footnote{University of Lodz - Poland, andrzej.bis@wmii.uni.lodz.pl, ORCID iD:0000-0002-3920-6044} \and Dikran Dikranjan\footnote{University of Udine - Italy, dikran.dikranjan@uniud.it, ORCID iD:0000-0002-1159-9958} \and Anna Giordano Bruno\footnote{corresponding author, University of Udine - Italy, anna.giordanobruno@uniud.it, ORCID iD:0000-0003-3431-2240} \and Luchezar Stoyanov\footnote{University of Western Australia - Australia, luchezar.stoyanov@uwa.edu.au, ORCID iD:0000-0002-4637-9277}}
\date{}
\newlength{\bibitemsep}\setlength{\bibitemsep}{.1\baselineskip plus .05\baselineskip minus .05\baselineskip}
\newlength{\bibparskip}\setlength{\bibparskip}{0pt}
\let\oldthebibliography\thebibliography
\renewcommand\thebibliography[1]{%
  \oldthebibliography{#1}%
  \setlength{\parskip}{\bibitemsep}%
  \setlength{\itemsep}{\bibparskip}%
}
\begin{document}


\maketitle

\def\R{{\mathbb R}}
\def\T{{\mathbb T}}
\def\S{{\mathbb S}}
\def\Z{{\mathbb Z}}
\def\C{{\mathbb C}}
\def\N{{\mathbb N}}

\def\pa{\underline{\mathfrak{par}}}
\def\paa{\mathrm{par}}
\def\cov{\mathfrak{cov}}
\def\covv{\mathrm{cov}}

\def\aa{{\mathcal A}}
\def\bb{{\mathcal B}}
\def\cc{{\mathcal C}}
\def\dd{{\mathcal D}}
\def\mm{{\mathcal M}}
\def\nn{{\mathcal N}}
\def\ee{{\mathcal E}}
\def\kk{{\mathcal K}}

\def\g{{\bf g}}
\def\h{{\bf h}}
\def\k{{\bf k}}
\def\kbar{\overline{k}}
\def\n{{\bf n}}
\def\r{{\bf r}}
\def\s{{\bf s}}
\def\ep{\epsilon}
\def\tr{\tilde{r}}
\def\ts{\tilde{s}}

\def\m{{\sf m}}
\def\om{\overline{\m}}

\def\Glc{{\rm Gl}(k,\C)}
\def\glc{{\rm gl}(k,\C)}
\def\sl{{\rm sl}(2,\R)}
\def\Gl{{\rm Gl}(2,\R)}
\def\SO{{\rm SO}(n)}
\def\e{\emptyset}
\def\dist{\mbox{\rm dist}}
\def\diam{\mbox{\rm diam}}
\def\con{\mbox{\rm const}}

\def\th{\tilde{h}}
\def\id{\mathop{\rm id}\nolimits}

\def\bs{\bigskip}
\def\ms{\medskip}
\def\endofproof{$\Box$\bs
}

\def\hh{\widehat{h}}
\def\tN{\widetilde{N}}
\def\Int{\mbox{\rm Int}}

\def\sp{$\spadesuit$}

\newtheorem{theorem}{Theorem}[section]
\newtheorem{fact}[theorem]{Fact}
\newtheorem{corollary}[theorem]{Corollary}
\newtheorem{lemma}[theorem]{Lemma}
\newtheorem{proposition}[theorem]{Proposition}
\newtheorem{conjecture}[theorem]{Conjecture}

\newtheorem{claim}
{Claim}

\numberwithin{equation}{section}

\newtheorem{problem}[theorem]{Problem}
   \newtheorem{construction}{Construction}

\theoremstyle{definition}
\newtheorem{definition}[theorem]{Definition}
\newtheorem{remark}[theorem]{Remark}
\newtheorem{remarks}[theorem]{Remarks}
\newtheorem{question}[theorem]{Question}
\newtheorem{example}[theorem]{Example}

\renewcommand{\theequation}{\arabic{section}.\arabic{equation}}

\abstract{We study the receptive metric entropy for semigroup actions on probability spaces, inspired by a similar notion of topological entropy introduced by Hofmann and Stoyanov \cite{HS}. We analyze its basic properties and its relation with the classical metric entropy. 
In the case of semigroup actions on compact metric spaces we compare the receptive metric entropy with the receptive topological entropy looking for a Variational Principle. With this aim we propose several characterizations of the receptive topological entropy. Finally we introduce a receptive local metric entropy inspired by a notion by Bowen generalized in the classical setting of amenable group actions by Zheng and Chen \cite{ZC}, and we prove partial versions of the Brin-Katok Formula and the local Variational Principle.}

\medskip
\noindent Keywords: topological entropy, metric entropy, receptive entropy, slow entropy, local metric entropy, semigroup action, regular system, Variational Principle, local Variational Principle.

\noindent MSC2020: 28D20, 37A35, 37B40, 37C05.

\medskip
\noindent Data sharing not applicable to this article as no datasets were generated or analysed during the current study.

\section{Introduction}

%
 
In \cite{HS} a general notion of \emph{receptive topological entropy} (the term ``receptive'' was coined later on) was introduced and studied for a uniformly continous action $T\colon G\times X \to X$ of a locally compact semigroup $G$ on a metric space $(X,d)$. The concept defined there depends on a system $\Gamma = (N_0, N_{1},\ldots,N_{n},\ldots)$ of compact subsets of the acting semigroup $G$ satisfying $N_{k} N_{m} \subseteq N_{k+m}$ for every $k,m\in\Z_+$; such $\Gamma$ is called a \emph{regular system} in $G$. The receptive topological entropy of $T$ with respect to $\Gamma$ is defined by 
\begin{equation}\label{1.1}
\th (T,\Gamma) = \sup\left\{ \lim_{\ep \to 0}{\limsup_{n \rightarrow \infty}} \frac{1}{n} \log s_{n}(\ep , K, T) \colon K\subseteq X,\ K\ \text{compact}\right\},
\end{equation}
where $s_n(\ep,K,T)$ is the maximum size of an $(n,\ep)$-separated set of $K$ (i.e., a set $E \subseteq K$ such that for any distinct $x,y \in E$ there exists $g \in N_n$ with $d(gx,gy) > \ep$). 

In case $G=\Z_+$, the action $T$ is generated by the uniformly continuous selfmap $f=T(1,-)\colon X\to X$, and taking $N_n=[0,n]$ for every $n\in\Z_+$, the receptive topological entropy $\th(T,\Gamma)$ coincides with the classical Bowen-Dinaburg's topological entropy $h(f)$.
As in that situation, in \eqref{1.1} one can use the so called $(n,\ep)$-spanning sets and $r_n(\ep,K,T)$ (see Section~\ref{recsec}).

A topological entropy similar to \eqref{1.1} was studied, albeit in a different context, by Ghys, Langevin and Walczak \cite{GLW}, and later by Bi\'s \cite{Bis1}.

In the present paper we study a natural similar definition of a \emph{receptive metric entropy} $\th_\mu(T,\Gamma)$ of a measure-preserving action $T\colon G\times X\to X$ of a discrete semigroup $G$ on a probability space $(X,\mu)$ with respect to a regular system $(N_n)_{n\in\N}$ in $G$. 
Also in this case, for $G=\Z_+$, $f=T(1,-)$ and $\Gamma=([0,n])_{n\in\N_+}$, the receptive metric entropy $\th_\mu(T,\Gamma)$ coincides with the classical metric entropy $h_\mu(f)$ by Kolmogorov and Sinai.

\smallskip
The dependence of the receptive entropies on the regular system $\Gamma$ is present starting from the classical case of actions of ${\Z}_{+}$. Indeed the regular system  $\Gamma =([0,n])_{n\in\Z_+}$ is accepted as natural and is very rarely mentioned explicitly, however, in general $\th(T,\Gamma)$ varies with  $\Gamma$. 
Kushnirenko \cite{Ku} pointed this out for the case of the metric entropy of measure-preserving selfmaps $f$. He defined the concept of a metric $A$-entropy of $f$, where $A=(t_{n})_{n\in\Z_+}$ is a sequence of positive integers.  In particular, he proved that for $A=(2^{n})_{n\in\Z_+}$, the metric $A$-entropy is different from the classical metric entropy, which corresponds to the choice $A=(n)_{n\in\Z_+}$. 
A similar topological concept, called \emph{topological sequence entropy}, was introduced and studied several years later by Goodman \cite{Go}, who also investigated the relationship between his concept and the measure-theoretic analogue defined by Kushnirenko. After that there has been a significant activity in studying both kinds of sequence entropies  -- see for example \cite{De,Ca,FrS,Le}. 

On the other hand, there exist well-known definitions of metric entropy $h_\mu$ and topological entropy $h$ for amenable group and semigroup actions (see for example \cite{Ru1,Ru2,Con,Oll,Mi1,E,El} for these notions and their main properties) and in this case the value of the entropy does not depend on the choice of the F\o lner sequence used in the definition. 
However, with these classical definitions, both the metric and the topological entropy are frequently zero (see Section~\ref{classicalhm} and Section~\ref{classicaltop} below). For example this is the case for smooth actions $T$ of ${\Z}_{+}^{k}$, with $k > 1$, on manifolds: as mentioned in \cite{HS}, while $h(T)=0$, the receptive topological entropy $\th(T)$ is not trivially zero. 
Other examples are given below where $\th_\mu(T)$ and $\th(T)$ are non-zero, while the classical entropies $h_\mu(T)$ and $h(T)$ are both zero. Conversely, as one can see from the definitions, when $h_\mu(T)$ or $h(T)$ is non-zero our definitions produce $\infty$.

\medskip
In this paper first of all, in Section~\ref{recmetric} we study the basic properties of the receptive metric entropy $\th_\mu(T,\Gamma)$ of a measure-preserving action $T\colon G\times X\to X$ on a probability space $(X,\mu)$, where $\Gamma$ is a regular system in $G$. 
We compare $\th_\mu(T,\Gamma)$ with the classical metric entropy $h_\mu(T)$ in case $\Gamma$ is also a F\o lner sequence (see Section~\ref{classicalhm}), and we point out that the receptive metric entropy does not vanish in many of the cases when the classical one does (see Proposition~\ref{Proposition2.4} and Example~\ref{Example2.5}). Further properties concerning restriction actions and products of actions are discussed in Section~\ref{further}.

The receptive topological entropy $\th(T,\Gamma)$ is recalled in Section~\ref{rectop}, together with some results from \cite{HS}. Also in this case we recall that several natural actions where the receptive topological entropy does not vanish while the classical one does
 (see \cite{HS} and also \cite{Sch}).

\smallskip
In Section~\ref{equiv} we propose several different versions of the receptive topological entropy. We start considering a version using open covers for continuous actions on compact topological spaces, following the classical idea of Adler, Konhein and McAndrew \cite{AKMc}.

We generalize a well-known notion by Bowen \cite{Bow2} defining a receptive-like entropy $b(T,\Gamma)$ for a continuous action $T\colon G\times X\to X$ of a semigroup $G$ on a compact metric space $X$, where $\Gamma$ is a regular system in $G$.
Then $b(T,\Gamma)\leq\th(T,\Gamma)$, and the equality $b(T,\Gamma)=\th(T,\Gamma)$ holds when $G$ is commutative and finitely generated and $\Gamma$ is standard (i.e., $N_1$ generates $G$ and $N_n=N_1^n$ for every $n\in\Z_+$). This is proved in Theorem~\ref{Theorem5.3}, which very roughly follows the line of \cite{Bow2}, but its proof is more subtle and requires much more effort and computations.

It turns out that this receptive entropy $b(T,\Gamma)$ coincides with another entropy-like quantity  $c(T,\Gamma)$ defined by using some ideas of Pesin \cite{P} (see Theorem~\ref{Theorem5.5}).

\smallskip
In Section~\ref{VPsec}, we discuss the relation of the receptive topological entropy with the receptive metric entropy, looking for some Variational Principle. 
Following \cite[Chapter 6]{Wa},  for a compact metric space $X$ we consider Borel probability measures on $X$
having as $\sigma$-algebra of measurable sets precisely the $\sigma$-algebra of Borel sets; any such measure is necessarily inner and outer regular \cite[Theorem 6.1]{Wa}.
The classical {Variational Principle} due to Goodwyn \cite{G} and Goodman~\cite{Go1} states that, in case $G=\Z_+$ and $f=T(1,-)\colon X\to X$ is a continuous selfmap of the compact metric space $X$, then $$h(f)=\sup\{h_\mu(f)\colon \mu\ \text{$f$-invariant Borel probability measure}\}.$$
It is well-known that the Variational Principle holds also for continuous actions of amenable groups or of countable cancellative semigroups on compact metric spaces for the classical topological and metric entropy defined by means of a F\o lner sequence (see \cite{Oll,OllP} and \cite{Mi1,ST} respectively).

We conjecture that the Variational Principle holds for the receptive topological entropy and the receptive metric entropy (see Conjecture~\ref{MainConj}): if $G$ is a semigroup, $\Gamma=(N_n)_{n\in\Z_+}$ a regular system in $G$ and $T\colon G\times X\to X$ a continuous action on a compact metric space $X$, denote by $M(X,T)$ the family of all $T$-invariant Borel probability measure on $X$ and assume that $M(X,T)$ is non-empty; then 
\begin{equation}\label{VPintro}
\th(T,\Gamma) = \sup \{\th_\mu(T,\Gamma)\colon \mu\in M(X,T)\}.
\end{equation}
This remains an open problem, nevertheless we prove some partial results.
We give also a brief comment on the algebraic entropy and its connection to the topological entropy both in the classical and in the ``receptive'' cases.

\smallskip
To further clarify the relationships between the concepts considered in this paper, for a semigroup action $T\colon G\times X\to X$ on a compact metric space $X$ and $\Gamma$ a regular system in $G$, in Section~\ref{local}  we introduce a receptive local metric entropy $\underline h_\mu^{loc}(T,\Gamma)$, where $\mu$ is a Borel probability measure on $X$; this is defined by
$$\underline h_\mu^{loc}(T,\Gamma) = \int _X h_\mu^{loc}(x) \ d\mu,$$
where $h_\mu^{loc}(x)$  is the receptive local metric entropy at $x \in X$. 
The latter entropy was defined in \cite{Bis3} in the case when the acting semigroup is a finitely generated group and the regular system is standard, while an analogous local metric entropy was defined in \cite{ZC} for actions of amenable groups. All these authors were inspired by a similar notion by Brin and Katok \cite{BrK}.

We prove that, when $\mu$ is a $T$-invariant Borel probability measure on $X$,
\begin{equation}\label{B-KFormula}
\underline h_\mu^{loc}(T,\Gamma)\leq\th_\mu(T,\Gamma)
\end{equation}
 (see Theorem~\ref{Proposition6.2}) in the spirit of the Brin-Katok Formula. It remains an open problem to verify whether equality holds  (see Question~\ref{Ques99}), under suitable hypotheses, as it is known for the classical case of countable amenable group actions studied by Zheng and Chen \cite{ZC}. A positive response to this problem would ensure one of the inequalities in the Variational Principle \eqref{VPintro} in view of the following argument.

In the classical setting of countable amenable group actions a local version of the Variational Principle was established by Zheng and Chen \cite{ZC}, by using a version of topological entropy inspired by the one by Bowen \cite{Bow2}. So, we conjecture that also in our setting, possibly under suitable hypotheses, a ``local Variational Principle'' holds (see Conjecture~\ref{lVPconj}):
if $G$ is a semigroup, $\Gamma=(N_n)_{n\in\Z_+}$ a regular system in $G$, $T\colon G\times X\to X$ a continuous action on a compact metric space $X$, then 
\begin{equation}\label{lVPintro}
\th(T,\Gamma)=\sup\{\underline h_\mu^{loc}(T,\Gamma)\colon \mu\ \text{Borel probability measure on $X$}\}. 
\end{equation}
This was proved for $\Z_+$-actions by Feng and Huang  \cite[Theorem 1.2]{FH}. On the other hand, the inequality $\geq$ in \eqref{lVPintro} can be proved by applying Theorem~\ref{Theorem5.3} and Theorem~\ref{Theorem5.5}, 
as follows. In Theorem~\ref{Proposition6.1} we show the intermediate inequality $\th(T,\Gamma)\geq \underline S_{\mu}(T,\Gamma)$,  
where $\underline S_\mu(T,\Gamma)$ denotes the essential supremum of the measurable function $h_\mu^{loc}\colon X\to \R_{\geq0}$. As 
$\underline S_{\mu}(T,\Gamma) \geq \underline h_\mu^{loc}(T,\Gamma)$, we get half of the local Variational Principle, namely, 
the inequality $\geq$ in \eqref{lVPintro}. 
Nevertheless, also in this case the problem of the validity of the local Variational Principle \eqref{lVPintro} remains open. 

The conjecture on the validity of the local Variational Principle in \eqref{lVPintro} is supported also by a similar result obtained in \cite{KC} for the so-called slow entropy. Indeed, starting from the above-mentioned problem that the classical metric and topological entropy vanish very often, Katok and Thouvenot \cite{KT} proposed a notion of slow metric entropy for $\Z^d$-actions (see also Hochman \cite{Hochman}.
A slow topological entropy inspired by Bowen's topological entropy from \cite{Bow2} was introduced by Kong and Chen \cite{KC} for $\Z^d$-actions on compact metric spaces (note that the slow topological entropy does not coincide with the classical one even in the case $d=1$). In the same setting, Kong and Chen introduced also a notion of slow lower local metric entropy (which does not coincide with that in \cite{KT}) and proved that the counterpart of the local Variational Principle for the slow entropies holds.

\medskip
We warmly thank the referees for their careful reading and useful comments and suggestions.

\subsection{Notation and terminology}

 For a set $X$ we denote by $\covv(X)$ the {\em set of covers} of $X$ and by $\paa(X)$ the {\em set of partitions} of $X$. 
For $\ee_1,\ee_2, \ldots, \ee_k\in \covv(X)$ let 
$$\bigvee_{i=1}^k \ee_i=\{E_1\cap E_2\cap\ldots \cap E_k\colon E_i\in \ee_i\ \forall i\in\{1,2,\ldots,k\}\}.$$
If $\ee_1,\ee_2, \ldots,\ee_k\in \paa(X)$, then $\bigvee_{i=1}^k \ee_i\in \paa(X)$ as well. 
For a topological space $X$ we denote by $\cov(X)$ the {\em subfamily of $\covv(X)$ consisting of open covers} of $X$. If $X$ is a measure space with a probability measure we denote by $\pa(X)$ the {\em subfamily of $\paa(X)$ consisting of finite  partitions} of $X$ by means of measurable subsets.

The set $\covv(X)$ carries a natural preorder: if $\ee$ and $\aa$ are families of subsets of  a set $X$ we write $\aa  \prec \ee$ if $\ee$ {\em refines} $\aa$, i.e., every $E \in \ee$ is contained in some element of $\aa$.  If  $\ee$ and $\aa$ are partitions of $X$, then $\aa  \prec \ee$ implies that every $A \in \aa$ is a union of elements of $\ee$.

 If  $T\colon G \times X\to X,\ T(g,x) = g\, x ,$ is an action of a semigroup $G$ on a set $X$, for $g\in G$ we use the notation $g$  also for the selfmap $g\colon x\mapsto gx$ of $X$ and in such a sense we write $g^{-1}A$ to denote the inverse image of a subset $A$ of $X$ under this selfmap.  For $g\in G $ and $\aa\in \covv(X)$ we let $g^{-1} \aa = \{g^{-1} A \colon A \in \aa\}$.

\begin{remark}
In this sense, when $G$ acts on a topological space $X$ by continuous  self-maps, this action induces also an action of $G$ on the lattice of open sets of $X$ by $U\mapsto g^{-1}U$ and consequently also on $\cov(X)$. 
 
Furthermore, when  $G$ acts on a probability space $X$ by {measure-preserving self-maps}, then  $G$ acts also on the Boolean $\sigma$-algebra of measurable sets of $X$, by $B\mapsto g^{-1}B$,  and consequently also on $\pa(X)$.
\end{remark}

 We consider only semigroups $G$ with identity $e$ and actions $T\colon G\times X\to X$ such that $T(e,-)=id_X$.

\subsection{Regular systems}

\begin{definition}
A \emph{regular system} in a semigroup $G$ is by definition a sequence of subsets of $G$ of the form
$\Gamma = (N_0, N_{1}, \ldots ,N_{n}, \ldots )$
such that 
\begin{equation}\label{EqRedSys}
e\in N_0 \quad \text{and} \quad N_i N_j \subseteq N_{i+j}\ \text{for all}\ i,j\in\Z_+.
\end{equation}
Throughout this paper we assume that \emph{all sets $N_n$ are finite} in a regular system $\Gamma=(N_n)_{n\in\Z_+}$.
\end{definition}


\begin{example} 
A prominent example of regular system is obtained by taking in \eqref{EqRedSys} a finite set $N_1$ of generators of a finitely generated semigroup $G$ with $N_0 \subseteq N_1$ (and so $e\in N_1$) and $N_m=N_1^m$ for all $m\geq 1$. 
Following \cite{HS}, we call such regular systems {\em standard}.  
Clearly, {only finitely generated semigroups may have standard regular systems}, in case $G = \bigcup_{i\in\Z_+} N_i$ is imposed. 

Our main example is  $G = {\Z}^{k}$ (or $G = \Z^k_+$) with a {standard regular system} given, for $n\in \Z_+$, by  $N_n = [-n,n]^k$ (respectively, $N_n = [0,n]^k$).
\end{example}

When $T\colon G\times X\to X$ is an action of a semigroup $G$ on a set $X$, given a regular system $\Gamma=(N_n)_{n\in\Z_+}$ in $G$ and  a cover $\aa$ of $X$, define for every $n\geq1$ a  cover of $X$ by 
$$\aa^n_\Gamma = \bigvee_{g \in N_n} g^{-1}\aa = \left\{ \bigcap_{g\in N_n} g^{-1} A_{g} \colon  A_{g} \in \aa \mbox{ for  all } g\in N_n
\right\}.$$
Clearly, $\aa^n_\Gamma$ is a partition whenever $\aa$ is a partition. {If the regular system is fixed we briefly write $\aa^n$ instead of $\aa^n_\Gamma.$}

\section{Metric entropy}\label{recmetric}

\subsection{Definition of the receptive metric entropy}

Assume that $(X, \mm, \mu)$ is a probability space with a probability measure $\mu$ defined on the $\sigma$-algebra $\mm$ of subsets of $X$, and let 
\begin{equation}\label{2.3}
T\colon G \times X\to X, \quad T(g,x) = g\, x ,
\end{equation}
be a \emph{measure-preserving action} of $G$ on $X$, i.e., the map $g\colon X\to X$, $x\mapsto gx$, is measure-preserving for all $g \in G$. 
For a fixed regular system $\Gamma=(N_n)_{n\in\Z_+}$ in $G$, we define the \emph{metric entropy} $\th_\mu(T) = \th_\mu(T,\Gamma)$ of $T$
with respect to $\Gamma$ essentially using the approach in \cite{HS}. Such definition in the case $G = \Z^2$ was given in the honours thesis  (diploma work) \cite{T} of Leon Todorovich, however without much follow up.

Recall (see e.g. \cite[Chapter 4]{Wa}) that if $\aa = \{A_1, A_2, \ldots, A_k\}\in \pa(X)$, 
then the \emph{entropy of $\aa$} is defined by
$$H_\mu(\aa) = - \sum_{i=1}^k \mu(A_i) \, \log \mu(A_i),$$
with the usual agreement that $0\, \log 0 = 0$. Since the measure $\mu$ is $T$-invariant, it follows that $H_\mu (g^{-1} \aa) = H_\mu(\aa)$ for all $g \in G$.

\begin{definition}
Let $G$ be a semigroup, $\Gamma=(N_n)_{n\in\Z_+}$ a regular system in $G$, and $(X,\mathcal M,\mu)$ a probability space.
The \emph{receptive metric entropy} of the measure-preserving action $T\colon G\times X\to X$ with respect to $\aa\in\pa(X)$ and $\Gamma$ is
\begin{equation}\label{2.4}
\th_{\mu}(T, \aa, \Gamma) = \limsup_{n\to\infty} \frac{1}{n} H_\mu \left(\aa^n_\Gamma\right).
\end{equation}
The \emph{receptive metric entropy of $T$} with respect to $\Gamma$ is $\th_\mu(T,\Gamma) = \sup \{\th_{\mu} (T,\aa,\Gamma)\colon \aa\in\pa(X) \}.$
\end{definition}

When the system $\Gamma$ is clear from the context and no confusion is possible, we shall omit $\Gamma$.

\begin{remark}\label{R2.4}
In the classical case $G = \Z_+$, consider the measure-preserving action $T\colon G\times X\to X$ on the probability space $(X,\mathcal M,\mu)$ as a single measure-preserving transformation $T \colon X\to X$ (i.e., $T(1,-)=T$). With respect to the standard regular system $\Gamma=([0,n])_{n\in\Z_+}$, 
$$\th_{\mu}(T, \aa) = \lim_{n\to\infty} \frac{1}{n} H_\mu \left(\bigvee_{i =0}^{n} T^{-i}\aa \right) = \inf_{n\geq 1} \frac{1}{n} H_\mu \left(\bigvee_{i =0}^{n} T^{-i}\aa \right) = h_\mu(T,\aa),$$
so $\th_\mu(T)$ coincides with the Kolmogorov-Sinai metric entropy $ h_\mu(T)$ of $T$.
\end{remark}

\begin{example}\label{rec0}
The trivial action $T\colon G\times X\to X$ of a semigroup $G$ on a probability space $(X,\mathcal M,\mu)$ has always  $\th_\mu(T) = 0$ with respect to any regular system $\Gamma=(N_n)_{n\in\Z_+}$ in $G$. Indeed, for every $\aa\in\pa(X)$ one has $\aa_\Gamma^n = \aa$, so the limit in \eqref{2.4} is $0$, hence $\th_\mu(T,\aa)=0$ regardless  of $\Gamma$. This gives $\th_\mu(T) = 0$. Similarly, $\th_\mu(T) = 0$ when $G$ is finite. 
\end{example}

\begin{remark}
The computation of the receptive measure entropy can be reduced to the case of faithful actions. Indeed, the measure-preserving action $T\colon G\times X \to X$ of a semigroup $G$ on a measure probability space $(X,\mathcal M,\mu)$ can be factorized in an obvious way through an action $T'\colon G' \times X \to X$ of a quotient $G'$ of $G$ that faithfully acts on $X$ (i.e., if $g x = x$ for all $x\in X$, then $g = e_{G'}$). Since, given a regular system $\Gamma=(N_n)_{n\in\Z_+}$ in $G$, for every $n\geq1$, $\aa^n_\Gamma$ coincides for both actions and for every $\aa\in\pa(X)$,  we deduce that $\th_\mu(T,\aa)= \th_\mu(T',\aa)$. 
\end{remark}

It follows immediately from the definition that the receptive metric entropy is a \emph{conjugacy invariant}.  This comes also from the following more general property of monotonicity (obtained with $H=G$ and $\eta = id_G$): 

\begin{proposition}\label{bis}
Let $G$ be a semigroup, $\Gamma=(N_n)_{n\in\Z_+}$ a regular system in $G$ and $(X,\mathcal M,\mu)$ a probability space. Let  $(Y,\nn, \nu)$ be another probability space, let $S\colon H \times Y\to Y$ be a measure-preserving action and let there exist a measure-preserving surjective map $\varphi\colon X\to Y$ and a homomorphism $\eta\colon G\to H$ with $S(\eta(g), \varphi(x)) = \varphi (T(g,x))$  for all $g \in G$ and $x \in X$ (briefly, $\eta(g)\circ\varphi=\varphi\circ g$ for every $g\in G$). 
$$\xymatrix{ X\ar[d]_{\varphi} \ar[r]^{g}& X\ar[d]^{\varphi}\\
Y \ar[r]_{\eta(g)} & Y}$$
Then $\th_\nu(S,\eta(\Gamma)) \leq \th_\mu(T,\Gamma)$ with respect to a fixed regular system $\Gamma=(N_n)_{n\in\Z_+}$ in $G$. If $\eta$ is an isomorphism and $\varphi$ is an isomorphism (i.e., $\varphi$ is a bijection),  then $\th_\nu(S,\eta(\Gamma)) = \th_\mu(T,\Gamma)$.
\end{proposition}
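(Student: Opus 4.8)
The plan is to transfer partitions of $Y$ back to $X$ along $\varphi$, and to check that this pullback is compatible with the dynamical refinements on both sides, so that the Shannon entropies agree term by term. First I would fix a partition $\bb\in\pa(Y)$ and form its pullback $\varphi^{-1}\bb=\{\varphi^{-1}B\colon B\in\bb\}$. Since $\varphi$ is measurable, surjective and measure-preserving, $\varphi^{-1}\bb\in\pa(X)$ with $\mu(\varphi^{-1}B)=\nu(B)$ for every $B\in\bb$; hence $H_\mu(\varphi^{-1}\cc)=H_\nu(\cc)$ for every $\cc\in\pa(Y)$. This is the only place the measure-preserving and surjectivity hypotheses on $\varphi$ are used.

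The key step is an intertwining identity. From $\eta(g)\circ\varphi=\varphi\circ g$ one gets, for each $g\in G$ and each measurable $B\subseteq Y$, the equality $g^{-1}\varphi^{-1}B=(\varphi\circ g)^{-1}B=(\eta(g)\circ\varphi)^{-1}B=\varphi^{-1}(\eta(g)^{-1}B)$, that is $g^{-1}\varphi^{-1}\bb=\varphi^{-1}(\eta(g)^{-1}\bb)$. Joining over $g\in N_n$ and pulling $\varphi^{-1}$ outside the join yields
\[
(\varphi^{-1}\bb)^n_\Gamma=\bigvee_{g\in N_n} g^{-1}\varphi^{-1}\bb=\varphi^{-1}\left(\bigvee_{g\in N_n}\eta(g)^{-1}\bb\right).
\]
Next I would identify the inner join with the refinement on the $Y$-side, namely $\bigvee_{g\in N_n}\eta(g)^{-1}\bb=\bigvee_{h\in\eta(N_n)}h^{-1}\bb=\bb^n_{\eta(\Gamma)}$: repeating a factor in a join does not change the partition, and whenever $\eta(g)=\eta(g')=h$ the product $h^{-1}B_g\cap h^{-1}B_{g'}=h^{-1}(B_g\cap B_{g'})$ is empty unless $B_g=B_{g'}$, so the nonempty atoms are exactly those indexed by $h\in\eta(N_n)$. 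Combining the two displays gives $(\varphi^{-1}\bb)^n_\Gamma=\varphi^{-1}(\bb^n_{\eta(\Gamma)})$, and therefore $H_\mu\big((\varphi^{-1}\bb)^n_\Gamma\big)=H_\nu\big(\bb^n_{\eta(\Gamma)}\big)$ for every $n$.

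Dividing by $n$ and taking $\limsup$ then gives $\th_\mu(T,\varphi^{-1}\bb,\Gamma)=\th_\nu(S,\bb,\eta(\Gamma))$ for every $\bb\in\pa(Y)$. Taking the supremum over $\bb\in\pa(Y)$ and using that $\{\varphi^{-1}\bb\colon\bb\in\pa(Y)\}\subseteq\pa(X)$, I obtain
\[
\th_\nu(S,\eta(\Gamma))=\sup_{\bb\in\pa(Y)}\th_\mu(T,\varphi^{-1}\bb,\Gamma)\leq\sup_{\aa\in\pa(X)}\th_\mu(T,\aa,\Gamma)=\th_\mu(T,\Gamma),
\]
which is the desired inequality. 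For the equality under the bijectivity hypotheses, when $\varphi$ is an isomorphism of probability spaces the pullback $\bb\mapsto\varphi^{-1}\bb$ is a bijection $\pa(Y)\to\pa(X)$ (with inverse $\aa\mapsto\{\varphi(A)\colon A\in\aa\}$), so the two suprema range over matching families and the inequality becomes an equality; alternatively one applies the monotonicity already proved to $\varphi^{-1}$ and $\eta^{-1}$ to get the reverse inequality.

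The main obstacle is not the inequality of entropies, which reduces to the measure-preserving property of $\varphi$, but the bookkeeping in the join $\bigvee_{g\in N_n}\eta(g)^{-1}\bb$ when $\eta$ fails to be injective on $N_n$: one must verify carefully that collapsing repeated factors leaves the partition unchanged and that the surviving nonempty atoms are precisely those indexed by $\eta(N_n)$, so that the refinement on $X$ pulls back from $\bb^n_{\eta(\Gamma)}$ rather than from a strictly finer partition. Everything else is routine.
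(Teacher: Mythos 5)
Your proof is correct and follows essentially the same route as the paper: pull back partitions along $\varphi$, establish the identity $(\varphi^{-1}\bb)^n_\Gamma=\varphi^{-1}\bigl(\bb^n_{\eta(\Gamma)}\bigr)$, transfer entropy via measure preservation, take suprema, and obtain the equality case by applying the inequality to $\varphi^{-1}$ and $\eta^{-1}$. The only cosmetic difference is in handling non-injectivity of $\eta$ on $N_n$: you collapse repeated factors and discard empty atoms directly, whereas the paper proves one inclusion via chosen representatives $g$ with $\eta(g)=h$ and then invokes the fact that nested partitions coincide.
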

\begin{proof} 
Given $\bb = \{ B_1,B_2, \ldots, B_k\}\in\pa(Y)$, $\varphi^{-1}(\bb) = \{\varphi^{-1}(B_1),\varphi^{-1}(B_2), \ldots,\varphi^{-1}(B_k)\}\in\pa(X)$ with $\mu(\varphi^{-1}(B_i)) = \nu(B_i)$ for all $i\in\{1,2,\ldots,k\}$, as $\varphi$ is measure-preserving. Fixed $n \geq 1$, we verify that
\begin{equation}\label{==}
(\varphi^{-1}( \bb))_\Gamma^n= \varphi^{-1}\left(\bb^n_{\eta(\Gamma)}\right).
\end{equation}
To this end, take $B\in\varphi^{-1}\left(\bb^n_{\eta(\Gamma)}\right)$, i.e., $B:=\varphi^{-1}\left(\bigcap_{h\in\eta(N_n)}h^{-1}B_h\right)$
with $B_h\in\bb$ for every $h\in \eta(N_n)$. 
Then 
$$B=\bigcap_{h\in\eta(N_n)}\varphi^{-1}(h^{-1}B_h)=\bigcap_{g\in N_n}\varphi^{-1}(\eta(g)^{-1}B_{\eta(g)})=\bigcap_{g\in N_n}g^{-1}\varphi^{-1}(B_{\eta(g)})\in (\varphi^{-1}( \bb))_\Gamma^n,$$
where for each $h\in \eta(N_n)$ an element $g\in N_n$ is chosen with $\eta(g) = h$. This proves the inclusion $\varphi^{-1}\left(\bb^n_{\eta(\Gamma)}\right)\subseteq (\varphi^{-1}( \bb))_\Gamma^n$. Since these are partitions, we conclude that \eqref{==} holds.
Since $\varphi$ is measure-preserving, this implies $$H_\nu \left(\bb^n_{\eta(\Gamma)}\right) = H_\mu \left(\varphi^{-1}(\bb^n_{\eta(\Gamma)})\right)\leq H_\mu \left((\varphi^{-1}( \bb))_\Gamma^n\right),$$ so $\th_\nu (S, \bb,\eta(\Gamma)) \leq \th_\mu(T, \varphi^{-1}(\bb),\Gamma)\leq \th_\mu(T,\Gamma)$. Therefore $\th_\nu(S, \eta(\Gamma)) \leq \th_\mu(T,\Gamma)$.

\smallskip
If $\eta$ is an isomorphism and $\varphi$ is an isomorphism too, taking $\varphi^{-1}$ and $\eta^{-1}$ in place of $\varphi$ and $\eta$, and $\eta(\Gamma)$ in place of $\Gamma$ (now $\eta^{-1}(\eta(\Gamma))=\Gamma$), the above argument gives $\th_\mu(T,\Gamma) \leq \th_\nu(S,\eta(\Gamma))$. So $\th_\mu(T,\Gamma) = \th_\nu(S,\eta(\Gamma))$.
\end{proof}

\subsection{First properties of the receptive metric entropy}

\begin{proposition}\label{Proposition2.6}
Let $G$ be a semigroup, $\Gamma=(N_n)_{n\in\Z_+}$ a regular system in $G$, $(X,\mathcal M,\mu)$ a probability space and $T\colon G\times X\to X$ a measure-preserving action.
Let $p\geq 1$ be an integer and $\Gamma' =(N_{pn})_{n\in\Z_+}$. 
Then $\th_\mu(T,\aa,\Gamma') =  p \cdot \th_\mu(T,\aa,\Gamma)$ for every $\aa\in\pa(X)$, and thus $\th_\mu(T,\Gamma') = p\cdot \th_\mu(T,\Gamma).$
\end{proposition}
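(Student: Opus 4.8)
The plan is to reduce everything to one elementary identity relating $\Gamma'$ to $\Gamma$ and then to a comparison of two $\limsup$'s. First I would check that $\Gamma'=(N_{pn})_{n\in\Z_+}$ is indeed a regular system: $e\in N_0=N_{p\cdot 0}$, and $N_{pi}N_{pj}\subseteq N_{pi+pj}=N_{p(i+j)}$, so \eqref{EqRedSys} holds for $\Gamma'$. Next, substituting the sets of $\Gamma'$ directly into the definition of $\aa^n_\Gamma$ gives
$$\aa^n_{\Gamma'} = \bigvee_{g\in N_{pn}} g^{-1}\aa = \aa^{pn}_\Gamma,$$
so $H_\mu(\aa^n_{\Gamma'}) = H_\mu(\aa^{pn}_\Gamma)$ for every $n\geq 1$. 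Writing $\phi(m)=\tfrac1m H_\mu(\aa^m_\Gamma)$ and $a_m=H_\mu(\aa^m_\Gamma)$, this turns the claim $\th_\mu(T,\aa,\Gamma')=p\,\th_\mu(T,\aa,\Gamma)$ into
$$\th_\mu(T,\aa,\Gamma')=\limsup_{n\to\infty}\frac1n H_\mu(\aa^{pn}_\Gamma)=p\,\limsup_{n\to\infty}\phi(pn)=p\,\limsup_{m\to\infty}\phi(m),$$
that is, into the statement that the $\limsup$ of $\phi$ along the multiples of $p$ equals the $\limsup$ of $\phi$ along all $m$.

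One inequality is free: $(\phi(pn))_n$ is a subsequence of $(\phi(m))_m$, so $\limsup_n\phi(pn)\leq\limsup_m\phi(m)$, which already yields $\th_\mu(T,\aa,\Gamma')\leq p\,\th_\mu(T,\aa,\Gamma)$. The substance of the proposition is the reverse inequality, and the tool I would use is monotonicity of the sequence $a_m$. Indeed, once $N_m\subseteq N_{m+1}$ one has $\aa^m_\Gamma\prec\aa^{m+1}_\Gamma$ (each atom $\bigcap_{g\in N_{m+1}}g^{-1}A_g$ of $\aa^{m+1}_\Gamma$ sits inside the atom $\bigcap_{g\in N_m}g^{-1}A_g$ of $\aa^m_\Gamma$), hence $a_m\leq a_{m+1}$ by the standard fact that a finer partition has larger entropy. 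I expect this monotonicity step to be the main point to secure: it is precisely what forces the $\limsup$ along the sparse subsequence $(pn)$ to ``catch up'' to the full $\limsup$, and it is where the structure of the regular system enters, through the inclusion $N_m\subseteq N_{m+1}$ (automatic for standard systems and, more generally, whenever $e\in N_1$). Subadditivity of $a_m$ would be the natural alternative, but the regular-system axiom only gives $N_mN_k\subseteq N_{m+k}$, which controls $a_{m+k}$ from below rather than above, so monotonicity is the route I would take.

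Granting monotonicity, I would close the reverse inequality by a squeeze. For each $m$ set $n=\lceil m/p\rceil$, so that $m\leq pn<m+p$ and hence $a_m\leq a_{pn}$; using $a_{pn}\geq 0$ and $m\geq pn-p+1$,
$$\phi(m)=\frac{a_m}{m}\leq\frac{a_{pn}}{m}=\frac{a_{pn}}{pn}\cdot\frac{pn}{m}\leq \phi(pn)\cdot\frac{pn}{pn-p+1}.$$
Since $\tfrac{pn}{pn-p+1}\to 1$ as $m\to\infty$, taking $\limsup$ gives $\limsup_m\phi(m)\leq\limsup_n\phi(pn)$, and together with the trivial inequality this proves $\th_\mu(T,\aa,\Gamma')=p\,\th_\mu(T,\aa,\Gamma)$ for every $\aa\in\pa(X)$. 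Finally, taking the supremum over $\aa\in\pa(X)$ on both sides, the constant factor $p$ pulls out of the supremum, yielding $\th_\mu(T,\Gamma')=p\,\th_\mu(T,\Gamma)$ and completing the argument.
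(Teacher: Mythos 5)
Your proof is correct and takes essentially the same approach as the paper's: the subsequence inequality handles one direction, and for the converse you round $m$ up to the next multiple $pn$ of $p$, use monotonicity of $a_m=H_\mu(\aa^m_\Gamma)$, and let the ratio $pn/m\to 1$, which is exactly the paper's argument (there phrased via a subsequence $(n_r)$ realizing the $\limsup$ and the bound $a_{n_r}\leq a_{(m_r+1)p}$ with $m_r=\lfloor n_r/p\rfloor$). If anything you are more explicit than the paper, which silently invokes the refinement inequality $H_\mu(\aa^{n_r}_\Gamma)\leq H_\mu(\aa^{(m_r+1)p}_\Gamma)$ that, as you correctly note, rests on the inclusion $N_m\subseteq N_{m'}$ for $m\leq m'$ (automatic for standard systems or whenever $e\in N_1$).
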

\begin{proof}
Given $\aa\in\pa(X)$,  it is enough to prove that
\begin{equation}\label{2.7}
\th_\mu(T, \aa, \Gamma):=\limsup_{n\to\infty} \frac{1}{n} H_\mu \left(\bigvee_{g \in N_n} g^{-1}\aa \right) 
= \limsup_{n\to\infty} \frac{1}{n p } H_\mu \left(\bigvee_{g \in N_{pn}} g^{-1}\aa \right)=:\frac{1}{p}\th_\mu(T,\aa,\Gamma').
\end{equation}
Since $\frac{1}{np} H_\mu(\bigvee_{g\in N_{np}}g^{-1}A)$ is a subsequence of $\frac{1}{n} H_\mu(\bigvee_{g\in N_n}g^{-1}A)$, we immediately get the inequality 
$$\limsup_{n\to\infty} \frac{1}{n} H_\mu \left(\bigvee_{g \in N_n} g^{-1}\aa \right) \geq \limsup_{n\to\infty} \frac{1}{np} H_\mu \left(\bigvee_{g \in N_{np}} g^{-1}\aa \right).$$
To prove the converse inequality, note that there exists an increasing sequence of integers $(n_r)_{r\in\Z_+}$ with $n_r\to \infty$ and 
such that 
\begin{equation}\label{submission}
\limsup_{n\to\infty} \frac{1}{n} H_\mu \left(\bigvee_{g \in N_n} g^{-1}\aa \right) = \lim_{r\to\infty} \frac{1}{n_r} H_\mu \left(\bigvee_{g \in N_{n_r}} g^{-1}\aa \right).
\end{equation}
For every  $r \geq p$ put $m_r := \lfloor \frac{n_r}{p}\rfloor \geq 1$, so that
$$m_r p \leq n_r < (m_r +1) p .$$
Then 
$$\frac{1}{n_r} H_\mu \left(\bigvee_{g \in N_{n_r}} g^{-1}\aa \right) \leq  \frac{1}{n_r} H_\mu \left(\bigvee_{g \in N_{(m_r+1)p}} g^{-1}\aa \right) 
= \frac{(m_r+1)p}{n_r} \cdot \frac{1}{(m_r+1)p} H_\mu \left(\bigvee_{g \in N_{(m_r+1)p}} g^{-1}\aa \right) , $$
which, together with \eqref{submission}, implies
$$\limsup_{n\to\infty} \frac{1}{n} H_\mu \left(\bigvee_{g \in N_n} g^{-1}\aa \right)=\lim_{r\to\infty} \frac{1}{n_r} H_\mu \left(\bigvee_{g \in N_{n_r}} g^{-1}\aa \right)  \leq  \limsup_{n\to\infty} \frac{1}{np} H_\mu \left(\bigvee_{g \in N_{np}} g^{-1}\aa \right).$$
Hence \eqref{2.7} holds.
\end{proof}

For a slight generalization of this property see \cite[Remark 2.3(b)]{B-S1}. 

For $G =\Z_+$ the equality $\th_\mu(T,\Gamma') = p\cdot \th_\mu(T,\Gamma)$ is known as ``logarithmic law'', since it becomes $h_\mu(T^p)=p\cdot h_\mu(T)$, where $T$ is the measure-preserving transformation $T(1,-)$ and $\Gamma=([0,n])_{n\in\Z_+}$.

\medskip
 It is natural to compare the metric entropy of each individual map $g\in G$ with $\th_\mu(T)$. It turns out that for elements $g \in N_1$ the metric entropy of $g$ is bounded above by $\th_\mu(T)$.

\begin{proposition}\label{Proposition2.3} 
Let $G$ be a semigroup, $\Gamma=(N_n)_{n\in\Z_+}$ a regular system in $G$, $(X,\mathcal M,\mu)$ a probability space and $T\colon G\times X\to X$ be a measure-preserving action. If $g \in N_m$ for some integer $m\geq 1$, then $h_\mu(g) \leq m\cdot \th_\mu(T)$. Thus,
\begin{equation}\label{2.6}
\sup_{g\in N_1} h_\mu(g) \leq \th_\mu(T) .
\end{equation}
\end{proposition}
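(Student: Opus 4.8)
The plan is to reduce everything to a single partition $\aa$ and then exploit that the iterates of $g$ sit inside the sets $N_{im}$ of the regular system. Recall that the Kolmogorov--Sinai entropy of the measure-preserving transformation $g\colon X\to X$ is
$$h_\mu(g)=\sup_{\aa\in\pa(X)}h_\mu(g,\aa),\qquad h_\mu(g,\aa)=\lim_{n\to\infty}\frac1n H_\mu\Big(\bigvee_{i=0}^{n-1}g^{-i}\aa\Big),$$
the limit existing by subadditivity. Hence it suffices to fix $\aa\in\pa(X)$ and prove $h_\mu(g,\aa)\le m\cdot\th_\mu(T,\aa,\Gamma)$; taking the supremum over $\aa\in\pa(X)$ then yields $h_\mu(g)\le m\cdot\th_\mu(T,\Gamma)$, and the particular case $m=1$ together with the supremum over $g\in N_1$ gives \eqref{2.6}.

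First I would record the combinatorial core. Since $g\in N_m$, an easy induction using $N_iN_j\subseteq N_{i+j}$ gives $g^i\in N_m^{\,i}\subseteq N_{im}$ for every $i\ge 0$ (with $g^0=e\in N_0$). As the sets of the regular system are nested and $im\le mn$ for $0\le i\le n-1$, this places the whole orbit segment $\{g^0,g^1,\dots,g^{n-1}\}$ inside $N_{mn}$. Consequently $\aa^{mn}_\Gamma=\bigvee_{h\in N_{mn}}h^{-1}\aa$ refines $\bigvee_{i=0}^{n-1}g^{-i}\aa$, that is $\bigvee_{i=0}^{n-1}g^{-i}\aa\prec\aa^{mn}_\Gamma$, and monotonicity of partition entropy under refinement yields
$$H_\mu\Big(\bigvee_{i=0}^{n-1}g^{-i}\aa\Big)\le H_\mu\big(\aa^{mn}_\Gamma\big)\qquad\text{for all }n\ge 1.$$

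Then I would divide by $n$ and pass to the limit. Letting $n\to\infty$ in the last inequality gives
$$h_\mu(g,\aa)=\lim_{n\to\infty}\frac1n H_\mu\Big(\bigvee_{i=0}^{n-1}g^{-i}\aa\Big)\le\limsup_{n\to\infty}\frac1n H_\mu\big(\aa^{mn}_\Gamma\big).$$
The right-hand side is exactly $m\cdot\th_\mu(T,\aa,\Gamma)$: writing $\Gamma'=(N_{mn})_{n\in\Z_+}$ one has $\aa^n_{\Gamma'}=\aa^{mn}_\Gamma$, so $\limsup_n\frac1n H_\mu(\aa^{mn}_\Gamma)=\th_\mu(T,\aa,\Gamma')=m\cdot\th_\mu(T,\aa,\Gamma)$ by Proposition~\ref{Proposition2.6} with $p=m$. (Even without Proposition~\ref{Proposition2.6} one gets the needed bound, since $\frac1n H_\mu(\aa^{mn}_\Gamma)=m\cdot\frac1{mn}H_\mu(\aa^{mn}_\Gamma)$ is $m$ times a subsequence of $\frac1k H_\mu(\aa^{k}_\Gamma)$, whose $\limsup$ is at most $\th_\mu(T,\aa,\Gamma)$.) Thus $h_\mu(g,\aa)\le m\cdot\th_\mu(T,\aa,\Gamma)\le m\cdot\th_\mu(T,\Gamma)$, and the suprema finish the proof.

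The one place that needs care is the index bookkeeping confining $g^0,\dots,g^{n-1}$ to $N_{mn}$: this is where both the submultiplicativity $N_iN_j\subseteq N_{i+j}$ and the nestedness of the $(N_k)$ are used, and it is precisely what forces the multiplicative constant $m$. A secondary subtlety is that the refinement only controls the subsequence $(mn)_n$ rather than every index, so one must check that the normalized $\limsup$ along this subsequence does not exceed $m\cdot\th_\mu(T,\aa,\Gamma)$; Proposition~\ref{Proposition2.6} packages this cleanly, but it is worth verifying that no larger value is lost by restricting to multiples of $m$.
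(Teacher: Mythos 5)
Your proof is correct and takes essentially the same route as the paper's: fix $\aa\in\pa(X)$, use $g^i\in N_{nm}$ (for $0\le i\le n$) to obtain the refinement $\bigvee_{i}g^{-i}\aa\prec\aa^{nm}_\Gamma$, apply monotonicity of $H_\mu$ under refinement, and convert $\limsup_{n\to\infty}\frac1n H_\mu(\aa^{mn}_\Gamma)$ into $m\cdot\th_\mu(T,\aa,\Gamma)$ via Proposition~\ref{Proposition2.6} with $p=m$, exactly as the paper does (your parenthetical subsequence argument is a valid one-line substitute, since only the inequality $\leq$ is needed). The one caveat is that the ``nestedness'' $N_{im}\subseteq N_{nm}$ you invoke is not literally part of the paper's definition of a regular system (which requires only $e\in N_0$ and $N_iN_j\subseteq N_{i+j}$); however, the paper's own proof tacitly assumes the very same fact when it asserts $g^i\in N_{nm}$ for all $i\in\{0,1,\ldots,n\}$, and it does hold in the cases of interest (e.g.\ for standard regular systems, where $e\in N_1$ gives $N_k=N_k\{e\}\subseteq N_kN_1\subseteq N_{k+1}$), so your write-up is exactly as complete as the paper's --- and more candid about where that hypothesis enters.
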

\begin{proof}
Let $g \in N_m$.  Given $\aa\in\pa(X)$, Remark~\ref{R2.4} yields 
$$h_\mu(g,\aa) =  \lim_{n\to \infty} \frac{1}{n} H_\mu \left(\bigvee_{i =0}^{n} g^{-i}\aa \right).$$
Since $g^i \in N_{n m}$ for all $i\in\{0,1, \ldots, n\}$, it follows that 
$$ \bigvee_{i =0}^{n} g^{-i}\aa\prec\bigvee_{h\in N_{mn}} h^{-1}\aa.$$
 By well-known properties of partitions (see e.g. \cite[Theorem 4.3]{Wa}), $\bb \prec \cc$ in $\pa(X)$ entails $H_\mu (\bb) \leq H_\mu (\cc)$. 
Thus, letting $\Gamma'=(N_{mn})_{n\in\Z_+}$ and applying Proposition~\ref{Proposition2.6} in the last equality, we deduce that 
$$h_\mu(g,\aa) 
\leq   \limsup_{n\to \infty} \frac{1}{n} H_\mu \left(\bigvee_{h\in N_{mn}} h^{-1}\aa \right)=\th_\mu(T,\aa,\Gamma') =m\cdot\th_\mu(T,\aa, \Gamma).$$
Therefore $h_\mu(g) \leq m\cdot \th_\mu(T)$.
\end{proof}

\begin{remark}
The receptive metric entropy of a measure-preserving action $T\colon G\times X\to X$ as above depends on the choice of the regular system $\Gamma$. One can be tempted to get rid of this dependence by taking 
\begin{equation}\label{AQ}
\sup \{\th_\mu(T,\Gamma)\colon\Gamma \ \mbox{ is a regular system for} \ G\}. 
\end{equation}
Proposition~\ref{Proposition2.6} shows that \eqref{AQ}  cannot be  a good remedy, as it can take only the values $0$ and $\infty$.  
\end{remark}

\subsection{Classical metric entropy}\label{classicalhm}

Recall that a discrete countably infinite cancellative semigroup $G$ is called \emph{left amenable}  if it admits a \emph{F\o lner sequence}, i.e., a sequence $(F_n)_{n\in\Z_+}$ of finite subsets of $G$ such that 
$$\lim_{n\to\infty} \frac{|F_n \; \Delta \; gF_n|}{|F_n|} = 0.$$
For countably infinite groups left and right amenability coincide. Finite semigroups are obviously amenable. 

The classical definition of \emph{metric entropy of a measure-preserving amenable semigroup action} $T\colon G \times X\to X$ on a probability space $(X,\mathcal M,\mu)$, such that $(N_n)_{n\in\Z_+}$ is a F\o lner sequence, is 
$$h_\mu(T) = \sup\{h_{\mu} (T,\aa)\colon \aa\in\pa(X)\},$$
where 
$$h_\mu(T, \aa) = \lim_{n\to\infty} \frac{1}{|N_n|} H_\mu \left(\bigvee_{g \in N_n} g^{-1}\aa \right).$$
See \cite{Con,Ru1,Ru2,E}. 
In this case the limit is known to exist (see e.g. \cite[Theorem 2.1]{Con}).

\begin{example}
When $G$ is a finite semigroup, for a measure-preserving action $T\colon G\times X\to X$  of $G$ on the probability space $(X,\mathcal M,\mu)$, $h_\mu(T,\aa)>0$ for every $\aa\in\pa(X)$ and one may even get $h_\mu(T)= \infty$ if $X$ has partitions $\aa$ with arbitrarily large $H_\mu(\aa)$.
This should be compared with the value $0$ of $\th_\mu(T)$ when $G$ is  finite (see Example~\ref{rec0}).
\end{example}

For $G =\Z_+^k$ every standard regular system gives rise to a F\o lner sequence (see \cite{B-S1}), while in $G = \Z_+$, $(N_n)_{n\in\Z_+}$ with $N_n =\{0,2,4,\ldots,2n\}$ for every $n\in\Z_+$, is a regular system which is not a F\o lner sequence.

\begin{remark}
Let $G$ be a semigroup and suppose that the regular system $\Gamma=(N_n)_{n\in\Z_+}$ in $G$ is a {F\o lner} sequence and satisfies  
\begin{equation}\label{growth}
|N_n| \geq c n^2\ \mbox{ for some constant }\  c > 0\ \mbox{ and all }\ n \geq 1.
\end{equation}
Let $T\colon G\times X\to X$ be a measure-preserving action on a probability space $(X,\mathcal M,\mu)$.
It then follows from the definitions that if for some $\aa\in\pa(X)$ we have $\th_\mu(T,\aa) < \infty$, then $h_\mu(T,\aa) = 0$. Thus, $\th_\mu(T) < \infty $ implies $h_\mu(T) = 0$. Equivalently, if $h_\mu(T) > 0$, then $\th_\mu(T) = \infty$.

The condition \eqref{growth} is available whenever $G$ is a finitely generated group that is not commensurable with a cyclic group (e.g., $G= \Z^k$ 
for some $k \geq 2$, see \cite{B-S1} for more detail).
Recall that two subgroups $H_1$ and $H_2$ of a group $G$ are said to be commensurable if the subgroup $H_1\cap H_2$ has finite index both in $H_1$ and $H_2$.
\end{remark}

In contrast with Proposition~\ref{Proposition2.3}, in the classical case, as mentioned  for example in \cite{Con}, $h_\mu(T) \leq h_\mu(g)$ for any $g \in G$. More importantly, we have the following fact proved by Conze \cite[Theorem 2.3]{Con} (where the case $k =2$ is dealt with, the general case is proved in a  similar way).

\begin{proposition}\label{Proposition2.4} 
Let $G = \Z^k$, $k >1$, let $g_1,g_2, \ldots,g_k$ be generators of $G$, and let $T\colon G\times X\to X$ be a measure-preserving action of $G$ on a probability space $(X,\mathcal M,\mu)$. If $h_\mu(g_i) < \infty$ for some $i\in\{1,2, \ldots, k\}$, then $h_\mu(T) = 0$.
\end{proposition}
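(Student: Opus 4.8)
\emph{Plan.} The idea is to use $k>1$ to turn the finiteness of $h_\mu(g_i)$ into a bound that decays when the acting lattice is rescaled. Relabel the generators so that $i=1$. Since $g_1,\dots,g_k$ generate $\Z^k$, they form a $\Z$-basis, so for each integer $m\ge 1$ the subgroup $mG=\langle g_1^m,\dots,g_k^m\rangle$ has index $m^k$ in $G$, with fundamental domain the box $R_m=\{g_1^{c_1}\cdots g_k^{c_k}\colon 0\le c_i<m\}$. Fix an arbitrary $\aa\in\pa(X)$, consider the restricted action $T|_{mG}$, and introduce the refined partition $\widetilde\aa=\bigvee_{g\in R_m}g^{-1}\aa$.

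The technical heart is the rescaling identity
\begin{equation*}
\bigvee_{h\in B^{(m)}_n}h^{-1}\widetilde\aa=\bigvee_{u\in R_{mn}}u^{-1}\aa,
\end{equation*}
where $B^{(m)}_n=\{(g_1^m)^{d_1}\cdots(g_k^m)^{d_k}\colon 0\le d_i<n\}$ is the F\o lner box of $mG$. This follows from the disjoint tiling $R_{mn}=B^{(m)}_n\cdot R_m$ (every $u\in R_{mn}$ factors uniquely as a product of an element of $B^{(m)}_n$ and one of $R_m$) together with commutativity of $G$. Dividing by $|B^{(m)}_n|=n^k$ and passing to the limit along the subsequence $mn$ (the classical amenable limit exists, so any subsequence works) gives $h_\mu(T|_{mG},\widetilde\aa)=m^k\,h_\mu(T,\aa)$. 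Since $\widetilde\aa$ is one particular partition, $m^k h_\mu(T,\aa)\le h_\mu(T|_{mG})$, and taking the supremum over $\aa$ yields $m^k h_\mu(T)\le h_\mu(T|_{mG})$.

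Next I combine this with two facts available for the $\Z^k$-action $T|_{mG}$: the bound recalled just before the statement, namely $h_\mu(T|_{mG})\le h_\mu(g)$ for every $g\in mG$, applied to $g=g_1^m\in mG$; and the logarithmic law $h_\mu(g_1^m)=m\,h_\mu(g_1)$ of Kolmogorov and Sinai. Chaining these,
\begin{equation*}
m^k\,h_\mu(T)\le h_\mu(T|_{mG})\le h_\mu(g_1^m)=m\,h_\mu(g_1),
\end{equation*}
so that $h_\mu(T)\le m^{1-k}\,h_\mu(g_1)$ for every $m\ge 1$. As $k>1$ and $h_\mu(g_1)<\infty$ by hypothesis, letting $m\to\infty$ forces $h_\mu(T)=0$.

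I expect the main obstacle to be the rescaling identity together with its normalization: one must check that the fundamental-domain refinement $\widetilde\aa$ exactly reproduces the join over the larger box $R_{mn}$, that $R_{mn}=B^{(m)}_n\cdot R_m$ is a genuine bijection (this is precisely where $\{g_i\}$ being a basis, and not merely a generating set, is used), and that the F\o lner normalizations at the two scales differ exactly by the index factor $m^k$. Once this identity is in place the remaining steps reduce to the two recalled/classical inequalities and an elementary limit; the hypothesis $k>1$ enters only through $m^{1-k}\to 0$, which is exactly why the rank-one case, where the same chain produces merely $h_\mu(T)\le h_\mu(g_1)$, does not collapse to zero.
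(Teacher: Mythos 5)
Your argument is correct, but note that the paper contains no proof of this proposition to compare against: it is imported from Conze \cite[Theorem 2.3]{Con}, and the only related material in the text is the inequality $h_\mu(T)\leq h_\mu(g)$, recalled (also from \cite{Con}, without proof) in the sentence immediately preceding the statement. Given that recalled inequality, your derivation is sound: a $k$-element generating set of $\Z^k$ is automatically a basis, so $R_{mn}=B^{(m)}_n\cdot R_m$ holds exactly and the join identity $\bigvee_{h\in B^{(m)}_n}h^{-1}\widetilde{\aa}=\bigvee_{u\in R_{mn}}u^{-1}\aa$ follows from $h^{-1}(g^{-1}A)=(gh)^{-1}A$; the renormalization along the subsequence $(mn)_n$ is legitimate because the classical limit exists and is F\o lner-independent (both facts recalled in the paper); the logarithmic law $h_\mu(g_1^m)=m\,h_\mu(g_1)$ is classical; and the chain $m^k h_\mu(T)\leq h_\mu(T\restriction_{m\Z^k})\leq m\,h_\mu(g_1)$ forces $h_\mu(T)\leq m^{1-k}h_\mu(g_1)\to 0$, which is exactly where $k>1$ enters. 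The one caveat is logical rather than mathematical: for $\Z^k$-actions the inequality you invoke is essentially equivalent in strength to the proposition itself (conversely, if $h_\mu(g)<\infty$ for some $g\neq e$, write $g$ as a power of a primitive vector, extend that vector to a basis, and the proposition yields $h_\mu(T)=0\leq h_\mu(g)$), so your proof has content only because that inequality admits an independent proof. It does --- the direct argument, essentially Conze's, writes the box join as an iterated join in the $g_1$-direction and the transverse directions and bounds $h_\mu(T,\aa)$ by $\lim_n n^{1-k}h_\mu(g_1)$ --- but observe that this direct route proves the proposition in one stroke, via the same mechanism as yours (the mismatch between the volume normalization $n^k$ and the linear Kolmogorov--Sinai scaling). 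What your version buys is a clean separation of concerns: all the measure-theoretic input is quarantined in the recalled inequality, while the new work, the finite-index rescaling $m^k h_\mu(T)\leq h_\mu(T\restriction_{m\Z^k})$, is elementary, of independent interest, and makes transparent why the rank-one case only yields $h_\mu(T)\leq h_\mu(g_1)$ rather than vanishing.
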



The next example shows that unlike the classical  metric entropy the receptive one does not vanish in this situation.

\begin{example}\label{Example2.5} 
Let $f\colon X\to X$ be a measure-preserving transformation of the probability space $(X,\mm, \mu)$, and let $G = \Z^k_+$ with $k > 1$ and the standard regular system $\Gamma=(N_n)_{n\in\Z_+}$ with $N_n = [0,n]^k$.
Define the action $T\colon G \times X\to X$ by 
$$T((m_1,m_2, \ldots, m_k), x) = f^{m_1+m_2+ \ldots + m_k}(x).$$
Assume $h_\mu(f) < \infty$. It then follows from Proposition~\ref{Proposition2.4} that $h_\mu(T) = 0$. 
On the other hand, we compute that $\th_\mu(T) = k\cdot h_\mu(f)$. Indeed, given $\aa\in\pa(X)$, for every $n\geq 1$,
$$\bigvee_{(m_1,m_2,\ldots,m_k) \in N_n} f^{-m_1- m_2-\ldots - m_k} \aa = \bigvee_{0 \leq m \leq kn} f^{-m}\aa,$$
so
$$\frac{1}{n} H_\mu \left(\bigvee_{(m_1,m_2,\ldots,m_k) \in N_n} f^{-m_1- m_2-\ldots - m_k} \aa \right)=\frac{1}{n} H_\mu \left(\bigvee_{0 \leq m \leq k n} f^{-m}\aa\right) =  k\cdot \frac{1}{kn} H_\mu \left(\bigvee_{0 \leq m \leq k n} f^{-m}\aa\right),$$
and taking limits as $n \to \infty$ (the limit exists in the right-hand side by Remark~\ref{R2.4}, so it exists in the left-hand side as well),  we get $\th_\mu(T, \aa) = k\cdot h_\mu(f, \aa)$. Hence $\th_\mu(T) = k\cdot h_\mu(f) < \infty$.

\smallskip
To get a particular example, let $X = \prod_{-\infty}^\infty  \{ 1,2,\ldots,r\}$, let $f=\sigma\colon X\to  X$ be the Bernoulli shift on $r$ symbols with probabilities $p_1,p_2, \ldots, p_r > 0$ and $\sum_{i=1}^r p_i = 1$, and let $\mu$ be the corresponding invariant (product) measure on $X$. Then $h_\mu(f) = - \sum_{i=1}^r p_i\, \log p_i < \infty$ (see e.g. \cite[Theorem 4.26]{Wa}). Then for the action $T$ defined above it follows that $\th_\mu(T) =  - k\, \sum_{i=1}^r p_i\, \log p_i < \infty$.
\end{example}

\section{Further properties of the receptive metric entropy}\label{further}

We begin with some facts about subactions. For $T\colon G\times X\to X$ a measure-preserving action of the semigroup $G$ on the probability space $(X,\mm, \mu)$, and $H$ a subsemigroup of $G$,  let $T\restriction_H\colon H \times X\to X$ be the action of $H$ on $X$ induced by $T$.

\begin{proposition}\label{Proposition3.1} 
Let $G$ be a semigroup, let  $\Gamma=(N_n)_{n\in\Z_+}$ be a regular system in $G$, and let $T\colon G\times X\to X$ be a measure-preserving action of $G$ on the probability space $(X,\mm, \mu)$. Let $H$ be a subsemigroup of $G$, and let $S=T\restriction_H$. Define the regular system $\Gamma _H= (M_n)_{n\in\Z_+}$ in $H$, by $M_n = N_n \cap H$ for all $n \geq 0$.  Then:
\begin{itemize}
\item[(a)] $\th_\mu(S,\Gamma_H) \leq \th_\mu(T,\Gamma )$;
\item[(b)] for $G = \Z^k_+$ with the standard regular system given by $N_n = [0,n]^k$ and its subsemigroup 
$H = p_1\Z_+ \times p_2\Z_+ \times \ldots \times p_k \Z_+$, where $p_1,p_2, \ldots, p_k \geq 1$ are given integers, $\th_\mu(S,\Gamma_H) \leq\th_\mu(T,\Gamma) \leq (p_1 p_2 \ldots p_k)\cdot \th_\mu (S,\Gamma_H).$
\end{itemize}
\end{proposition}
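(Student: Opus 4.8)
The plan is to deduce both statements from the two standard properties of the entropy functional $H_\mu$ on $\pa(X)$ — monotonicity under refinement (\cite[Theorem 4.3]{Wa}) and subadditivity $H_\mu(\bb\vee\cc)\leq H_\mu(\bb)+H_\mu(\cc)$ — combined, for part (b), with a covering argument that exploits the lattice structure of $\Z_+^k$. First I would record that $\Gamma_H=(M_n)_{n\in\Z_+}$ is genuinely a regular system in $H$: since $e\in H$ one has $e\in N_0\cap H=M_0$, and $M_iM_j=(N_i\cap H)(N_j\cap H)\subseteq N_iN_j\cap H\subseteq N_{i+j}\cap H=M_{i+j}$, so that $\th_\mu(S,\Gamma_H)$ is well defined.

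For part (a), fix $\aa\in\pa(X)$. Because $M_n=N_n\cap H\subseteq N_n$, the join over the smaller index set is the coarser partition, i.e. $\aa^n_{\Gamma_H}=\bigvee_{g\in M_n}g^{-1}\aa\ \prec\ \bigvee_{g\in N_n}g^{-1}\aa=\aa^n_\Gamma$. Monotonicity then gives $H_\mu(\aa^n_{\Gamma_H})\leq H_\mu(\aa^n_\Gamma)$ for every $n$; dividing by $n$, passing to the $\limsup$, and taking the supremum over $\aa\in\pa(X)$ yields $\th_\mu(S,\Gamma_H)\leq\th_\mu(T,\Gamma)$.

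For part (b) the left inequality is exactly part (a), so the substantial point is the right inequality. The key combinatorial observation is that $N_n=[0,n]^k$ is covered by a number of translates of $M_n$ that is bounded independently of $n$: writing each coordinate $x_i\in[0,n]$ as $x_i=q_ip_i+r_i$ with $0\leq r_i<p_i$ (so that $q_ip_i=x_i-r_i\leq n$ and hence $(q_1p_1,\dots,q_kp_k)\in M_n$) shows that
$$N_n\subseteq\bigcup_{v\in V}(v+M_n),\qquad V:=\prod_{i=1}^{k}\{0,1,\dots,p_i-1\},\quad |V|=p_1p_2\cdots p_k.$$
Since $G=\Z_+^k$ is commutative, for $v\in V$ and $w\in M_n$ the selfmap $v+w$ equals $v\circ w=w\circ v$, so $(v+w)^{-1}\aa=w^{-1}(v^{-1}\aa)$. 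Combining the covering with this factorization gives the refinement
$$\aa^n_\Gamma\ \prec\ \bigvee_{v\in V}\bigvee_{w\in M_n}(v+w)^{-1}\aa=\bigvee_{v\in V}(v^{-1}\aa)^n_{\Gamma_H}.$$

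I would then apply monotonicity followed by subadditivity to obtain $H_\mu(\aa^n_\Gamma)\leq\sum_{v\in V}H_\mu\big((v^{-1}\aa)^n_{\Gamma_H}\big)$. Dividing by $n$, taking $\limsup_n$ (using that the limsup of a finite sum is at most the sum of the limsups), and bounding each term by $\th_\mu(S,v^{-1}\aa,\Gamma_H)\leq\th_\mu(S,\Gamma_H)$ gives $\th_\mu(T,\aa,\Gamma)\leq(p_1\cdots p_k)\,\th_\mu(S,\Gamma_H)$; taking the supremum over $\aa\in\pa(X)$ completes the proof. The step requiring the most care is the covering estimate together with the commutativity-based factorization $(v+w)^{-1}\aa=w^{-1}(v^{-1}\aa)$: this is simultaneously the source of the multiplicative constant $p_1\cdots p_k$ and the only place where the special structure of $G=\Z_+^k$, rather than that of an arbitrary semigroup, is used.
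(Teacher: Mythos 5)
Your proposal is correct and follows essentially the same route as the paper: part (a) via monotonicity of $H_\mu$ under refinement, and part (b) via the covering $N_n\subseteq F+M_n$ with $F=\prod_{i=1}^k\{0,\dots,p_i-1\}$ (obtained from division with remainder in each coordinate) followed by subadditivity of $H_\mu$. The only cosmetic difference is at the last step: the paper uses invariance $H_\mu(s^{-1}\bb)=H_\mu(\bb)$ for each measure-preserving translate $s\in F$ to get $H_\mu(\aa^n_\Gamma)\leq |F|\,H_\mu(\aa^n_{\Gamma_H})$ and hence the bound with the same partition $\aa$, whereas you bound each term $H_\mu\bigl((v^{-1}\aa)^n_{\Gamma_H}\bigr)$ by $\th_\mu(S,v^{-1}\aa,\Gamma_H)\leq\th_\mu(S,\Gamma_H)$, which yields the same conclusion.
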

\begin{proof} 
(a) Clearly, for $\aa\in\pa(X)$ and for all $n \geq 1$, $
\frac{1}{n} H_\mu \left( \bigvee_{u \in M_n} u^{-1}\aa\right) \leq \frac{1}{n} H_\mu \left( \bigvee_{g \in N_{n}} g^{-1}\aa \right)$,
and so $\th_\mu(S, \aa) \leq \th_\mu(T, \aa)$.

(b) The first inequality is item (a).  Without loss of generality we may assume that $1\le p_1 \leq p_2 \leq \ldots \leq p_k$. Let  $F = \prod_{i=1}^k \{0,1, \ldots, p_i-1 \},$ with $|F| = p_1 p_2 \ldots p_k$. Let $n \geq p_k$. Then $N_n = M_n + F$. Indeed, for any $g = (m_1,m_2, \ldots, m_k) \in N_n$ and for every $i\in\{1, 2,\ldots, k\}$ we can write $m_i = t_i p_i + s_i$ for some integers $t_i \geq 1$ and $s_i\in\{0,1,\ldots, p_i -1\}$.
Thus, $g = u + s$, where $u = (t_1p_1,t_2p_2, \ldots, t_k p_k) \in M_n$ and $s = (s_1,s_2, \ldots, s_k) \in F$. 
The above argument implies that, for $\aa\in\pa(X)$ and an integer $n\geq 1$,
$$\bigvee_{g \in N_n}-g + \aa= \bigvee_{s\in F, u \in M_n} -(u+s) + \aa =\bigvee_{s\in F} -s + \left(\bigvee_{u\in M_n}-u + \aa\right),$$
and therefore
$$H_\mu\left(\bigvee_{g\in N_n}-g+\aa\right)\leq\sum_{s\in F}H_\mu\left(-s+\bigvee_{u\in M_n}-u+\aa \right)=\sum_{s\in F}H_\mu\left(\bigvee_{u\in M_n}-u+\aa\right),$$
the last equality due to the fact that each $s\in F$ is measure-preserving.  Thus, for all $n \geq 1$,
$$\frac{1}{n} H_\mu\left( \bigvee_{g \in N_n}-g + \aa\right)  \leq |F|\; \frac{1}{n}  H_\mu\left(\bigvee_{u\in M_n}- u + \aa\right),$$
which gives $\th_\mu(T, \aa,\Gamma) \leq |F|\cdot \th_\mu(S, \aa,\Gamma_H)$. Hence $\th_\mu(T,\Gamma)\leq |F|\cdot \th_\mu(S,\Gamma_H)$.
\end{proof}

Now we make a different choice of the regular system in a specific case. 

\begin{lemma}\label{(b)}
Let $G$ be a commutative semigroup, $\Gamma=(N_n)_{n\in\Z_+}$ a regular system in $G$, and let $T\colon G\times X\to X$ be a measure-preserving action of $G$ on the probability measure space $(X,\mm, \mu)$. Let $p \geq 1$, $H=pG$, and let $S=T\restriction_H$. Consider $G$ with the regular system 
$p\Gamma=(pN_n)_{n\in\Z_+}$ and $H$ with the regular system $\Gamma_H=(N_{n} \cap H)_{n\in\Z_+}$.
Then $\th_\mu(T, p\Gamma)=\th_\mu(S, p\Gamma)\leq p\, \th_\mu(S, \Gamma_H)$.
\end{lemma}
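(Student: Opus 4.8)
The plan is to reduce the whole statement to a single refinement estimate between the two regular systems $p\Gamma$ and $\Gamma_H$ inside $H$, and then to invoke Proposition~\ref{Proposition2.6} applied to the restricted action $S$. First I would record that $p\Gamma=(pN_n)_{n\in\Z_+}$ is a regular system whose sets are contained in $H$: each element of $pN_n$ has the form $pg$ with $g\in N_n$, hence lies in $pG=H$; moreover $e=pe\in pN_0$, and, crucially using commutativity, $pg+pg'=p(g+g')$ with $g+g'\in N_i+N_j\subseteq N_{i+j}$, so $pN_i+pN_j\subseteq pN_{i+j}$. This is the one place where commutativity of $G$ is essential (the same computation also shows $H=pG$ is a subsemigroup containing $e$, so $\Gamma_H$ is genuinely a regular system in $H$).

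The first equality $\th_\mu(T,p\Gamma)=\th_\mu(S,p\Gamma)$ is then immediate. For any $\aa\in\pa(X)$ and any $n$, the partition $\aa^n_{p\Gamma}=\bigvee_{h\in pN_n}h^{-1}\aa$ involves only the self-maps $h\colon X\to X$ with $h\in pN_n\subseteq H$, and these maps are literally the same whether regarded as part of the $G$-action $T$ or of its restriction $S=T\restriction_H$. Hence $H_\mu(\aa^n_{p\Gamma})$ is the same real number for both actions; applying $\limsup_n\frac1n(\cdot)$ and then $\sup$ over $\aa$ yields the claimed equality.

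For the inequality the key combinatorial step is the inclusion $pN_n\subseteq N_{pn}\cap H=:M_{pn}$, where I write $M_m:=N_m\cap H$ for the sets of $\Gamma_H$. Indeed $pg=g+\cdots+g$ ($p$ summands) lies in $N_n+\cdots+N_n\subseteq N_{pn}$ by the defining property of $\Gamma$, and $pg\in H$, so $pg\in M_{pn}$. Since $pN_n\subseteq M_{pn}$, enlarging the index set refines the join, so $\aa^n_{p\Gamma}\prec\bigvee_{h\in M_{pn}}h^{-1}\aa$, and therefore $H_\mu(\aa^n_{p\Gamma})\leq H_\mu\bigl(\bigvee_{h\in M_{pn}}h^{-1}\aa\bigr)$ by monotonicity of entropy under refinement (\cite[Theorem 4.3]{Wa}, as used in Proposition~\ref{Proposition2.3}).

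Finally I would divide by $n$ and pass to the limit superior. Since $\bigvee_{h\in M_{pn}}h^{-1}\aa$ is exactly the $n$-th partition of the $p$-dilated system $(M_{pn})_{n\in\Z_+}$ of $\Gamma_H$, the last displayed inequality gives $\th_\mu(S,\aa,p\Gamma)\leq\th_\mu(S,\aa,(M_{pn})_{n\in\Z_+})$, while Proposition~\ref{Proposition2.6} applied to the action $S$ with the integer $p$ and the system $\Gamma_H$ gives $\th_\mu(S,\aa,(M_{pn})_{n\in\Z_+})=p\cdot\th_\mu(S,\aa,\Gamma_H)$. Combining the two and taking the supremum over $\aa\in\pa(X)$ produces $\th_\mu(S,p\Gamma)\leq p\,\th_\mu(S,\Gamma_H)$, which together with the equality of the second paragraph completes the proof. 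I expect no serious obstacle; the only points needing care are verifying that $p\Gamma$ really lands in $H$ (where commutativity enters) and aligning the indices $n\mapsto pn$ so that Proposition~\ref{Proposition2.6} applies verbatim to $\Gamma_H$.
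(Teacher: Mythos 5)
Your proof is correct and takes essentially the same route as the paper's: the same key inclusion $pN_n\subseteq N_{pn}\cap H$, followed by refinement and monotonicity of $H_\mu$, and the index rescaling $n\mapsto pn$ on the system $\Gamma_H$. The only cosmetic differences are that you invoke Proposition~\ref{Proposition2.6} as a black box for the rescaling step (the paper instead uses directly the subsequence inequality for $\limsup$, which is exactly the easy half of that proposition), and that you spell out the equality $\th_\mu(T,p\Gamma)=\th_\mu(S,p\Gamma)$ and the verification that $p\Gamma$ and $\Gamma_H$ are regular systems landing in $H$, points the paper leaves implicit.
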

\begin{proof} 
Let $\aa\in\pa(X)$. For every $n \geq 1$ and every $g \in N_n$ we have $pg \in p N_n \subseteq N_{pn} \cap H$, so 
$$\bigvee_{u \in pN_n}u^{-1}\aa\prec \bigvee_{g \in N_{pn} \cap H}g^{-1}\aa .$$
Thus, for all $n \geq 1$,
$$\frac{1}{n} H_\mu \left( \bigvee_{u \in pN_n} u^{-1}\aa \right) \leq \frac{1}{n} H_\mu \left(\bigvee_{g\in N_{pn} \cap H}g^{-1}\aa \right) \leq
p\cdot \frac{1}{pn} H_\mu \left(\bigvee_{g\in N_{pn} \cap H}g^{-1}\aa \right).$$
Obviously, 
$$\limsup_{n\to \infty}  \frac{1}{pn} H_\mu \left(\bigvee_{g\in N_{pn} \cap H}g^{-1}\aa \right) \leq  \limsup_{n\to \infty}  \frac{1}{n} H_\mu \left(\bigvee_{g\in N_{n} \cap H}g^{-1}\aa \right) .$$
Combining this with the above gives
\begin{align*}
\th_\mu(T,\aa,p\Gamma) 
& = \limsup_{n\to \infty}  \frac{1}{n} H_\mu \left( \bigvee_{u \in pN_n} u^{-1}\aa \right) 
\leq p \; \limsup_{n\to \infty}  \frac{1}{pn} H_\mu \left(\bigvee_{g\in N_{pn} \cap H}g^{-1}\aa \right) \\
& = p\; \limsup_{n\to \infty}  \frac{1}{n} H_\mu \left(\bigvee_{g\in N_{n} \cap H}g^{-1}\aa \right)  = p \, \th_\mu(S,\aa,\Gamma_H) \qedhere.
\end{align*}
\end{proof}

\begin{example}\label{pH}
Let $G$, $p\geq1$, $H$, $X$, $T,S$ and $\Gamma=(N_n)_{n\in\Z_+}$ be as in Lemma~\ref{(b)}. Then, $\th_\mu(T,p\Gamma)=\th_\mu(S,p\Gamma)\leq \th_\mu(T,\Gamma)$ according to 
Proposition~\ref{bis} with $Y=X$, $\varphi = id_X$ and with $\eta$ the homomorphism $\eta:G \to H$ defined by 
$\eta(g)=pg$ for $g\in G$. By the final assertion of that proposition, $\th_\mu(S, p\Gamma)= \th_\mu(T,\Gamma)$, provided 
$\eta$ is an isomorphism (equivalently, injective).  

Let us apply this observation to the monoid $G=\Z_+^k$ with $k\geq 1$ and standard regular system $\Gamma=(N_n)_{n\in\Z_+}$, with 
$N_n=[0,n]^k$ for every $n\in\Z_+$. Now for every $p \geq 1$  the homomorphism $\eta:G \to H$, defined above,
is injective. So for the subsemigroup $H = pG$ of $G$ and  for every measure-preserving action $T\colon G\times X\to X$ on a probability measure space $(X,\mm, \mu)$, 
the above argument gives $\th_\mu(S, p\Gamma)= \th_\mu(T,\Gamma)$  for the restriction $S$ of $T$ on $H$. Since, obviously 
$p\Gamma=\Gamma_H=(pN_n)_{n\in\Z_+}$ in this case, we can add to the above equality also $\th_\mu(T, p\Gamma)=\th_\mu(S, p\Gamma)=  \th_\mu (S,\Gamma_H) =\th_\mu(T,\Gamma)$.
This should be compared with the much less sharp inequalities from Proposition \ref{Proposition3.1}(b) and from Lemma~\ref{(b)}.
\end{example}

Next we deal with products of measure-preserving actions. The proof of the following proposition is similar to that of the corresponding result about  the case $G = \Z_+$ (see e.g. the proof of \cite[Theorem 4.23]{Wa}). {To prove Proposition~\ref{Proposition3.2} we need the following lemma.}

\begin{lemma}\label{ablogab} 
For finite partitions $\mathcal{A}=\{A_1,A_2,\ldots,A_k\}$ of a probability space $(X_1, \mathcal M_1,\mu_1)$ and $\mathcal{B}=\{B_1,B_2,\ldots, B_l\}$ of a probability space $(X_2, \mathcal M_2,\mu_2)$ let $a_i=\mu_1(A_i)$ and $b_j=\mu_2(B_j),$
for any $i\in\{1,2,\ldots,k\}$ and $j\in\{1,2,\ldots,l\}$. Then $$\sum_{i,j} a_i b_j(\log a_i+\log b_j)=  \sum_{i=1}^k a_i \log a_i +\sum_{j=1}^l b_j \log b_j.$$
\end{lemma}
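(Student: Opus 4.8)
The plan is to prove the identity
$$\sum_{i,j} a_i b_j(\log a_i+\log b_j)=  \sum_{i=1}^k a_i \log a_i +\sum_{j=1}^l b_j \log b_j$$
by a direct computation, exploiting the fact that $\{a_i\}$ and $\{b_j\}$ are probability distributions, i.e.\ $\sum_{i=1}^k a_i=1$ and $\sum_{j=1}^l b_j=1$ (these hold because $\aa$ and $\bb$ are finite partitions of probability spaces, so the measures of their cells sum to the total measure $1$).

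First I would split the double sum on the left-hand side into the two natural pieces according to the two summands $\log a_i$ and $\log b_j$:
$$\sum_{i,j} a_i b_j(\log a_i+\log b_j)= \sum_{i=1}^k\sum_{j=1}^l a_i b_j \log a_i + \sum_{i=1}^k\sum_{j=1}^l a_i b_j \log b_j.$$
Then I would treat each double sum separately by factoring out the variable that is constant along the inner sum. In the first term, $\log a_i$ and $a_i$ do not depend on $j$, so the inner sum over $j$ produces $\sum_{j=1}^l b_j = 1$, leaving $\sum_{i=1}^k a_i \log a_i$. Symmetrically, in the second term, $\log b_j$ and $b_j$ do not depend on $i$, so the inner sum over $i$ produces $\sum_{i=1}^k a_i = 1$, leaving $\sum_{j=1}^l b_j \log b_j$. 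Adding the two results yields exactly the right-hand side.

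There is essentially no obstacle here: this is a routine rearrangement of a finite double sum, and the only ingredient beyond Fubini-type reordering of finite sums is the normalization $\sum_i a_i = \sum_j b_j = 1$, which is immediate from the definition of a finite partition of a probability space. The usual convention $0\log 0=0$ causes no difficulty, since any cell of measure zero simply contributes $0$ to both sides. The lemma will then be used in the proof of Proposition~\ref{Proposition3.2} to compute the entropy of a product partition $\aa\times\bb$, where the measures of product cells factor as $a_i b_j$, giving the additivity $H_{\mu_1\times\mu_2}(\aa\times\bb)=H_{\mu_1}(\aa)+H_{\mu_2}(\bb)$.
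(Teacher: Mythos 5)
Your proof is correct: the paper states Lemma~\ref{ablogab} without proof (treating it as routine), and your computation — splitting the double sum and using the normalizations $\sum_i a_i = \sum_j b_j = 1$ coming from the fact that $\aa$ and $\bb$ are partitions of probability spaces — is exactly the standard argument the authors implicitly rely on. Nothing is missing, and your remark on the convention $0\log 0=0$ and on how the lemma feeds into the proof of Proposition~\ref{Proposition3.2} matches the paper's use of it there.
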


\begin{proposition}\label{Proposition3.2} 
For $i=1,2$, let $G_i$ be a commutative finitely generated semigroup with a regular system $\Gamma^{(i)}= (N_{n}^{(i)})_{n\in\Z_+}$, let $(X_i, \mm_i, \mu_i)$ be a probability space and let $T_{i}\colon G_{i} \times X_{i}\to X_{i}$ be a measure-preserving action.
Consider $X = X_1 \times X_2$ with the product measure $\mu = \mu_1 \times \mu_2$ defined on the
$\sigma$-algebra $\mm$ in $X$ generated by $\mm_1 \times \mm_2$, and the semigroup $G = G_{1}\times G_{2}$ with the regular system 
$\Gamma=(N_n)_{n\in\Z_+}$ given by $N_{n} = N_{n}^{(1)}\times N_{n}^{(2)}$ for all $n\in\Z_+$. Let $T\colon G \times X \to X$ be the action defined by
$$T\left( (g_{1},g_{2}),(x_{1},x_{2})\right) = \left( T_{1}(g_{1},x_{1}),T_{2}(g_{2},x_{2})\right).$$
Then
\begin{equation}\label{3.1}
\max\{\th_{\mu_{1}}(T_{1}),\th_{\mu_{2}} (T_{2}) \} \leq  \th_{\mu}(T) \leq \th_{\mu_{1}}(T_{1}) + \th_{\mu_{2}} (T_{2}).
\end{equation}
\end{proposition}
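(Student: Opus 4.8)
The plan is to reduce everything to \emph{product partitions} $\aa\times\bb=\{A\times B\colon A\in\aa,\ B\in\bb\}$ with $\aa\in\pa(X_1)$ and $\bb\in\pa(X_2)$, for which both sides of \eqref{3.1} can be computed explicitly. The two tools are: (i) the factorisation of the refined partition
\[
(\aa\times\bb)^n_\Gamma=\aa^n_{\Gamma^{(1)}}\times\bb^n_{\Gamma^{(2)}}\qquad(n\geq1),
\]
and (ii) the additivity of entropy on products, $H_\mu(\cc\times\dd)=H_{\mu_1}(\cc)+H_{\mu_2}(\dd)$ for $\cc\in\pa(X_1)$, $\dd\in\pa(X_2)$, which is exactly Lemma~\ref{ablogab}. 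To prove (i) I would write $\aa\times\bb=\pi_1^{-1}\aa\vee\pi_2^{-1}\bb$, where $\pi_i\colon X\to X_i$ are the (measure-preserving) projections, and use the intertwining $\pi_i\circ g=g_i\circ\pi_i$ for $g=(g_1,g_2)$; then $g^{-1}(\aa\times\bb)=\pi_1^{-1}(g_1^{-1}\aa)\vee\pi_2^{-1}(g_2^{-1}\bb)$, and taking the join over $g\in N_n=N_n^{(1)}\times N_n^{(2)}$ (each factor containing $e$) the two families separate into $\pi_1^{-1}(\aa^n_{\Gamma^{(1)}})$ and $\pi_2^{-1}(\bb^n_{\Gamma^{(2)}})$, giving (i). Combining (i) and (ii) yields $H_\mu((\aa\times\bb)^n_\Gamma)=H_{\mu_1}(\aa^n_{\Gamma^{(1)}})+H_{\mu_2}(\bb^n_{\Gamma^{(2)}})$.

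For the lower bound I would test $\th_\mu(T)$ on product partitions with one trivial factor. Taking $\bb=\{X_2\}$, the identity (i) gives $(\aa\times\{X_2\})^n_\Gamma=\aa^n_{\Gamma^{(1)}}\times\{X_2\}$, whence $\tfrac1nH_\mu((\aa\times\{X_2\})^n_\Gamma)=\tfrac1nH_{\mu_1}(\aa^n_{\Gamma^{(1)}})$ and so $\th_\mu(T,\aa\times\{X_2\})=\th_{\mu_1}(T_1,\aa)$. Passing to the supremum over $\aa\in\pa(X_1)$ yields $\th_\mu(T)\geq\th_{\mu_1}(T_1)$, and symmetrically $\th_\mu(T)\geq\th_{\mu_2}(T_2)$; hence $\th_\mu(T)\geq\max\{\th_{\mu_1}(T_1),\th_{\mu_2}(T_2)\}$. (Alternatively this is Proposition~\ref{bis} applied to the projection $\pi_1$ together with the projection homomorphism $G\to G_1$, for which $\eta(\Gamma)=\Gamma^{(1)}$, and likewise for $\pi_2$.)

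For the upper bound on product partitions, dividing the last equality by $n$ and using the elementary inequality $\limsup_n(u_n+v_n)\leq\limsup_nu_n+\limsup_nv_n$ gives
\[
\th_\mu(T,\aa\times\bb)\leq\th_{\mu_1}(T_1,\aa)+\th_{\mu_2}(T_2,\bb)\leq\th_{\mu_1}(T_1)+\th_{\mu_2}(T_2).
\]

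The remaining, and main, step is to show that product partitions are \emph{sufficient} for the supremum defining $\th_\mu(T)$, i.e. that $\sup_{\aa,\bb}\th_\mu(T,\aa\times\bb)$ already equals $\sup_{\ee\in\pa(X)}\th_\mu(T,\ee)$; this is what upgrades the inequality above to $\th_\mu(T)\leq\th_{\mu_1}(T_1)+\th_{\mu_2}(T_2)$. In the classical case $G=\Z_+$ (Walters, Theorem~4.23) this is the Kolmogorov--Sinai generator argument: since finite unions of measurable rectangles generate $\mm$, any $\ee$ is approximated by a product partition $\xi=\aa\times\bb$ with $H_\mu(\ee\mid\xi)$ arbitrarily small, and one invokes $h_\mu(T,\ee)\leq h_\mu(T,\xi)+H_\mu(\ee\mid\xi)$. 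The hard part here is precisely this approximation: the analogous estimate reads $\tfrac1nH_\mu(\ee^n_\Gamma\mid\xi^n_\Gamma)\leq\tfrac{|N_n|}{n}\,H_\mu(\ee\mid\xi)$, whose coefficient $|N_n|/n$ is bounded only when $|N_n|=O(n)$ (as for $G=\Z_+$ with $N_n=[0,n]$) and diverges for the systems of main interest here (e.g. $N_n=[0,n]^k$ with $k\geq2$, where $|N_n|\sim n^k$). Thus the generator reduction does not transfer verbatim, and I expect the genuine difficulty of the proof to lie in establishing the sufficiency of product partitions --- by a more careful argument exploiting commutativity and the product structure of $\Gamma$ --- rather than in the (routine) computations (i) and (ii) above.
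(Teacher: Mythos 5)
Your items (i) and (ii), the lower bound, and the upper bound on product partitions coincide with the paper's own proof: the paper factorises $\cc^n_\Gamma$ for $\cc=\aa\times\bb$ exactly as in your (i), applies Lemma~\ref{ablogab} to get $H_\mu(\cc^n_\Gamma)=H_{\mu_1}(\aa^n_{\Gamma^{(1)}})+H_{\mu_2}(\bb^n_{\Gamma^{(2)}})$ (this is its equation \eqref{3.2}), deduces $\th_\mu(T,\cc)\le\th_{\mu_1}(T_1,\aa)+\th_{\mu_2}(T_2,\bb)$ by evaluating along a subsequence realising the $\limsup$ (the same content as your subadditivity of the $\limsup$), and reads the first inequality of \eqref{3.1} directly off \eqref{3.2}; your variant with $\bb=\{X_2\}$, or via Proposition~\ref{bis} applied to the projections, is equivalent.

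Where you and the paper part ways is the step you isolate as the main one, and there your assessment is accurate rather than mistaken: the paper does not prove that product partitions suffice. Its entire treatment of that point is the opening sentence ``It is enough to consider partitions of $X$ of the form $\cc=\aa\times\bb$'', supported only by the remark preceding the statement that the proof is similar to that of \cite[Theorem 4.23]{Wa}. Your explanation of why that classical reduction does not transfer is correct: Walters' argument passes from an arbitrary $\ee\in\pa(X)$ to an approximating product partition $\xi$ with $H_\mu(\ee\mid\xi)$ small, via
$$H_\mu(\ee^n_\Gamma)\le H_\mu(\xi^n_\Gamma)+\sum_{g\in N_n}H_\mu\left(g^{-1}\ee\mid g^{-1}\xi\right)=H_\mu(\xi^n_\Gamma)+|N_n|\,H_\mu(\ee\mid\xi),$$
which is harmless when one divides by $|N_n|$, but here one divides by $n$, and $|N_n|/n=|N_n^{(1)}|\,|N_n^{(2)}|/n\to\infty$ already for $N_n^{(i)}=[0,n]$. (Note that a finite product partition refining a given $\ee$ need not exist at all --- think of a partition of $[0,1]^2$ by the diagonal --- so the reduction cannot be a plain monotonicity argument either.) This is precisely the obstruction the authors themselves describe in Section~\ref{VPsec} in connection with the Variational Principle: the error term gets multiplied by $|N_n|$, while ``we only divide by $n$ and this cannot kill the extra factor''. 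So your proposal establishes everything the paper's printed proof establishes; the reduction to product partitions, which neither you nor the paper supplies, is a genuine gap in the paper's argument rather than an idea you failed to find there. It is also consistent with your suspicion that the unused hypotheses (commutativity and finite generation of $G_1,G_2$) were intended for exactly such a missing reduction, which would have to exploit more than generation of $\mm$ by rectangles --- for instance the independence built into $\mu=\mu_1\times\mu_2$.
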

\begin{proof} 
It is enough to consider partitions of $X$ of the form $\cc = \aa \times \bb = \{A\times B\colon A\in\aa, B\in\bb\},$
where $\aa\in\pa(X_1)$ and $\bb\in\pa(X_2)$. 
For every $n \geq 1$, 
$$\cc^n_\Gamma=  \bigvee_{g \in N_n} g^{-1} \cc = \bigvee_{(g_1, g_2) \in N_{n}^{(1)}\times N_{n}^{(2)}} (g_1,g_2)^{-1} (\aa\times \bb)
= \left(\bigvee_{g_1 \in N_{n}^{(1)}} g_1^{-1} \aa \right) \times  \left(\bigvee_{g_2 \in N_{n}^{(2)}} g_2^{-1} \bb\right) .$$
By Lemma~\ref{ablogab}, 
\begin{equation}\label{3.2}
\begin{split}
H_\mu (\cc^n_\Gamma)& = - \sum_{C\times D\in\cc^n_\Gamma}\mu(C\times D) \; \log \mu(C\times D) = - \sum_{(C,D) \in \cc^n_\Gamma} \mu_1(C) \; \mu_2(D) \; (\log \mu_1(C)+\log\mu_2(D))=\\
&= - \sum_{C \in \aa^n_{\Gamma^{(1)}}} \mu_1(C) \; \log \mu_1(C) - \sum_{D \in\bb^n_{\Gamma^{(2)}}} \mu_2(D) \;  \log\mu_2(D)=H_{\mu_1} (\aa^n_{\Gamma^{(1)}}) + H_{\mu_2} (\bb^n_{\Gamma^{(2)}}) .
\end{split}\end{equation}
Let $\th_\mu(T,\cc) = \lim_{k\to \infty}\frac{1}{n_k}\; H_{\mu} (\cc^{n_k})$ for some increasing sequence of integers $(n_k)_{k\in\Z_+}$ with $n_k\to\infty$. Then the above equalities show that
$$\th_\mu(T, \cc) \leq \limsup_{k\to\infty}  \frac{1}{n_k}\; H_{\mu_1} (\aa^{n_k}_{\Gamma^{(1)}})  + \limsup_{k\to\infty}  \frac{1}{n_k}\; H_{\mu_2} (\bb^{n_k}_{\Gamma^{(2)}})
\leq \th_{\mu_1} (T_1, \aa) + \th_{\mu_2} (T_2, \bb) .$$
This implies the second inequality in \eqref{3.1}. The first inequality in \eqref{3.1} follows immediately from \eqref{3.2}.
\end{proof}

The above is easily generalised to arbitrary finite products, so that it can be applied in the next example. 

\begin{example}\label{Example3.3} 
For $i\in\{1,2, \ldots, k\}$, let $(X_i, \mm_i, \mu_i)$ be a probability space and $f_{i} : X_{i}\to X_{i}$ a measure-preserving transformation. Endow the product space $X = X_{1} \times X_2\times \ldots \times X_{k}$ with the product measure $\mu = \mu_1 \times \mu_2\times\ldots \times \mu_k$. Consider the standard regular system $\Gamma=([0,n]^k)_{n\in\Z_+}$ in $\Z^k_+$ and define the action
$$T\colon\Z_+^{k} \times X\to X\quad \text{by}\quad T\left((n_{1},n_2,\ldots,n_{k}),(x_{1},x_2,\ldots,x_{k})\right)=\left( f_{1}^{n_{1}}(x_{1}),f_2^{n_2}(x_2),\ldots,f_{k}^{n_{k}}(x_{k})\right).$$
Then Proposition \ref{Proposition3.2} gives
$$\max\{h_{\mu_i}(f_{i})\colon i\in\{1,2,\ldots,k\}\} \leq  \th_{\mu}(T) \leq \sum_{i=1}^{k}h_{\mu_i}(f_{i}) . $$
Hence  $\th_{\mu}(T) > 0$ if $h(f_{i}) > 0$ for at least one $i\in\{1,2,\ldots,k\}$.
\end{example}

As mentioned in \cite{HS}, if the components $f_{i}$ in Example~\ref{Example3.3} are smooth maps on smooth compact manifolds with topological entropy $h(f_{i}) > 0$, then the smooth action $T$ of ${\Z}_{+}^{k}$ has positive receptive topological entropy (to be defined in the next section). The above shows that the same kind of examples exist with respect to the receptive metric entropy $\th_\mu$.

\section{Topological entropy}\label{rectop}

\subsection{Receptive topological entropy}\label{recsec}

Let $(X,d)$ be a metric space, let $T\colon G\times X\to X$ be an action of the semigroup $G$ on $X$ such that each of the maps $g\colon x \mapsto gx$ is uniformly continuous (briefly, $T$ is a uniformly continuous action).

Let $\Gamma=(N_n)_{n\in\Z_+}$ be a regular system in $G$. The topological entropy that we will consider here was defined and studied in \cite{HS} under more general assumptions (in \cite{HS} $G$ is a locally compact group and each $N_n$ is just a compact subset of $G$), using spanning and separated sets, following R. Bowen's definition \cite{Bow1} in the classical case of actions of ${\Z}_{+}$. 

\begin{definition}
Let $G$ be a semigroup, $\Gamma=(N_n)_{n\in\Z_+}$ a regular system in $G$ and $T\colon G\times X\to X$ a uniformly continuous action on a metric space $X$.
Let $K$ be a compact subset of $X$, let $\epsilon > 0$ and let $n \geq 1$.

 A subset $F$ of $K$ is called an \emph{$(n,\epsilon)$-spanning set for} $K$ (with respect to $\Gamma$) if for every $x \in K$ there exists $y \in F$ such that $d(gx,gy) \leq \ep$ for all $g \in N_n$.

{A subset $E$ of $K$} is called \emph{$(n,\epsilon)$-separated} (with respect to $\Gamma$) if for any $x,y \in E$, $x \neq y$,  there exists $g \in N_n$ such that $d(gx,gy) > \epsilon$. 
\end{definition}

These definitions can be easily understood by using the \emph{dynamic balls}, for $x\in X$, $\epsilon>0$, $n\in\Z_+$,
$$D^\Gamma_n(x, \epsilon) = \{y\in X \colon d(gx, gy) \leq \epsilon,\ \forall  g \in N_n \}.$$
When it is clear from the context we shall omit the regular system $\Gamma$.

Namely, $F$ is an $(n,\epsilon)$-spanning set for the compact subset $K$ of $X$ if $K \subseteq \bigcup_{x\in F} D_n(x, \epsilon)$, and $E$ is $(n,\epsilon)$-separated if for any $x,y \in E$ with $x \neq y$, $D_n(x,\epsilon) \cap D_n(y, \epsilon)=\emptyset$.

By the compactness of $K$, the minimal $(n,\ep)$-spanning sets and the maximal $(n,\ep)$-separated sets are finite, so we can define 
\begin{equation}\label{rsn}
\begin{split}
&r_{n}(\epsilon,T,K) = \min \left\{\vert F\vert\colon F \subseteq K,\ F\ \text{is $(n,\epsilon)$-spanning for}\ K \right\},\\
&s_{n}(\epsilon,T,K)=\max \left\{\vert E\vert\colon E \subseteq K,\ E\ \text{is $(n,\epsilon)$-separated}\right\}.
\end{split}\end{equation}
Then set
\begin{equation*}\label{rs}
\tr(\epsilon,T,K) = \limsup_{n \rightarrow \infty} \frac{1}{n} \log r_n(\ep, T,K)\quad\text{and}\quad \ts(\epsilon,T,K) = \limsup_{n \rightarrow \infty} \frac{1}{n} \log s_n(\ep,T,K).
\end{equation*}
These functions are non-increasing in $\ep$ and moreover, {following arguments  similar to those in \cite[p. 169]{Wa}, we obtain
$$\tr(\epsilon,T,K) \leq \ts(\epsilon,T,K) \leq \tr(\epsilon/2, T,K) ,$$
so  the following monotone limits exist and coincide:
$$\th (T,K,\Gamma) = \lim_{\epsilon \searrow 0} \tr(\epsilon, T,K) = \lim_{\epsilon \searrow 0} \ts(\epsilon,T,K).$$

\begin{definition}
Let $G$ be a semigroup, $\Gamma=(N_n)_{n\in\Z_+}$ a regular system in $G$ and $T\colon G\times X\to X$ a uniformly continuous action on a metric space $X$. Let $K$ be a compact subset of $X$. We call $\th(T,K,\Gamma)$ the \emph{receptive topological entropy of $T$ with respect to $K$} and the given regular system $\Gamma$ in $G$. The \emph{receptive topological entropy} of $T$ with respect to $\Gamma$ is 
$$\th (T,\Gamma) = \sup \{\th (T,K,\Gamma)\colon K \subseteq X,\ K\ \text{compact}\} .$$
\end{definition}

When the system $\Gamma$ is clear from the context and no confusion is possible, we shall omit $\Gamma$ and write simply $\th(T,K)$ and $\th(T)$.

When $X$ is compact, it is easy to see that $\th(T) = \th(T, X)$. In this case we will also use the shorter notation $r_n(\ep,T) = r_n(\ep, T, X)$, $\tr(\ep,T) = \tr(\ep, T, X)$, etc.

\begin{remark}
When $G = {\Z}_{+}$ and $\Gamma=([0,n])_{n\in\Z_+}$, the uniformly continuous action $T\colon G\times X\to X$ on a metric space $X$ is given by a uniformly continuous map $f=T(1,-)\colon X \to X$. 
Then $\th(T) = h(f)$ is the \emph{topological entropy} of the map $f$, as defined by Bowen in \cite{Bow1}.
\end{remark}

It follows from \cite[Proposition 2.6]{HS} that for the uniformly continuous action $T\colon G\times X\to X$ of the semigroup $G$ on the metric space $X$, and the regular system $\Gamma=(N_n)_{n\in\Z_+}$,
$$\sup_{g \in N_1} h(g)\leq \th(T),$$ 
where $h(g)$ is the topological entropy of the map $g\colon x \mapsto gx$ (a proof can be given in the line of Proposition~\ref{Proposition2.3} above).

\medskip
For the sake of completeness we mention the following special case of \cite[Proposition 2.7]{HS}, which is an analog of Bowen-Kushnirenko Theorem in the classical case (e.g., see \cite[Theorem 7.15]{Wa}).

\begin{proposition}\label{Proposition4.1}
 Let $X$ be a Riemannian manifold and let the continuous action $T\colon G\times X\to X$ of the semigroup $G$ be such that each of the maps $g\colon x \mapsto gx$ is smooth. Consider the standard regular system $\Gamma=(N_n)_{n\in\Z_+}$ in $G$ where $N_1$ is a finite generating subset of $G$.  Consider $X$ with the metric generated by the Riemannian metric of $X$, and let $k=\dim X$. Then
$$\th(T) \leq \max \{0, k\log a\},$$
where $a = \sup_{x \in X} \max_{g \in N_1} \| d_{x}\: g \|$ and $d_x\: g$ is the differential of $g$ at $x\in X$.
\end{proposition}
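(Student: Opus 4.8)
The plan is to bound the receptive topological entropy from above by estimating the spanning numbers $r_n(\ep,T,K)$ for each compact $K\subseteq X$, exactly as in the classical Bowen--Kushnirenko argument (see \cite[Theorem 7.15]{Wa}), the only new ingredient being a uniform control of the Lipschitz constants of \emph{all} elements of $N_n$ at once. We may assume $a<\infty$, since otherwise the bound is vacuous, and since $\th(T)=\sup\{\th(T,K)\colon K\subseteq X\ \text{compact}\}$ it suffices to show $\th(T,K)\le\max\{0,k\log a\}$ for every compact $K$.

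First I would record the Lipschitz estimate. Writing $d$ for the Riemannian distance, the hypothesis $a=\sup_{x\in X}\max_{g\in N_1}\|d_x g\|$ gives that each generator $g\in N_1$ satisfies $d(gx,gy)\le a\, d(x,y)$, because the length of the image under $g$ of any path joining $x$ to $y$ is at most $a$ times the length of the path. Setting $b=\max\{1,a\}$ and using that $\Gamma$ is standard, every $g\in N_n=N_1^{\,n}$ is a composition of at most $n$ generators (recall $e\in N_1$), hence $d(gx,gy)\le b^n d(x,y)$ for all $g\in N_n$ simultaneously. Consequently the dynamic ball satisfies $D_n(x,\ep)\supseteq B(x,\ep/b^n)$, where $B(x,r)$ denotes the metric ball: if $d(x,y)\le\ep/b^n$ then $d(gx,gy)\le b^n\cdot\ep/b^n=\ep$ for every $g\in N_n$.

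Next I would count. By compactness of $K$ there are $\delta_0>0$ and $C=C_K>0$ such that, for every $\delta\in(0,\delta_0)$, the set $K$ can be covered by at most $C\delta^{-k}$ balls of radius $\delta$ centred in $K$; this is the standard volume/covering estimate obtained by covering $K$ with finitely many normal charts in which the metric is bi-Lipschitz to the Euclidean one. Taking $\delta=\ep/b^n$, any such covering yields (for $n$ large) an $(n,\ep)$-spanning set for $K$ by the previous paragraph, so $r_n(\ep,T,K)\le C(b^n/\ep)^k$. Therefore
$$\tr(\ep,T,K)=\limsup_{n\to\infty}\frac1n\log r_n(\ep,T,K)\le\limsup_{n\to\infty}\frac1n\bigl(\log C+nk\log b-k\log\ep\bigr)=k\log b,$$
for every $\ep>0$, whence $\th(T,K)=\lim_{\ep\searrow0}\tr(\ep,T,K)\le k\log b=k\max\{0,\log a\}=\max\{0,k\log a\}$. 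Taking the supremum over compact $K$ gives the claim.

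The point that requires the ``receptive'' structure, and which I expect to be the only genuine obstacle, is the uniform Lipschitz bound $d(gx,gy)\le b^n d(x,y)$ valid for \emph{all} $g\in N_n$ simultaneously: this is what replaces the single Lipschitz constant of $f^n$ in the classical statement, and it is exactly here that one uses both the standardness of $\Gamma$ (so that $N_n=N_1^{\,n}$ consists of words of length $\le n$) and the global nature of the constant $a$. Everything else is the routine covering argument; the only care needed is that on a possibly non-compact $X$ the covering estimate is applied on the fixed compact $K$, and that the degenerate case $a<1$ (where $b=1$, all maps are non-expanding, and $r_n(\ep,T,K)$ stays bounded in $n$, giving $\th(T,K)=0$) is subsumed in the same computation.
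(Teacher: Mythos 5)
Your proof is correct. Note that the paper itself gives no proof of this proposition: it is quoted as a special case of \cite[Proposition 2.7]{HS} and described as an analog of the Bowen--Kushnirenko theorem, and your argument --- the uniform Lipschitz bound $b^n$ valid for all $g\in N_n=N_1^n$ at once, the resulting inclusion $B(x,\ep/b^n)\subseteq D_n(x,\ep)$ of metric balls in dynamic balls, and the standard $C\delta^{-k}$ covering estimate on a compact $K$ --- is exactly the expected adaptation of that classical covering argument to the receptive setting, so it matches the approach the paper points to (one small simplification you could add: since the standard regular system has $e\in N_1$, automatically $a\geq\|d_x\,\id\|=1$, so $b=a$ and the case distinction $\max\{0,k\log a\}$ is vacuous).
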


\subsection{Classical topological entropy}\label{classicaltop}

Let $G$ be a semigroup and $T\colon G\times X\to X$ a uniformly continuous action on a metric space $X$.
Assume in addition that $G$ is amenable and let $\Gamma=(N_n)_{n\in\Z_+}$ be a F\o lner sequence in $G$. 
For a compact subset $K$ of the metric space $X$, define $r_n(\ep,T,K)$ and $s_n(\ep,T, K)$ as in \eqref{rsn} and 
set $$r(\epsilon,T,K) = \limsup_{n \rightarrow \infty} \frac{1}{|N_n|} \log r_n(\ep, T,K)\quad\text{and}\quad s(\epsilon,T,K) = \limsup_{n \rightarrow \infty} \frac{1}{|N_n|} \log s_n(\ep,T,K).$$
The \emph{topological entropy of $T$} with respect to $K$ is then 
$$h (T,K) = \lim_{\epsilon \searrow 0} r(\epsilon, T,K) = \lim_{\epsilon \searrow 0} s(\epsilon,T,K) $$
 while the \emph{topological entropy} is 
$$h (T) = \sup \{h(T,K)\colon K \subseteq X,\ K\ \text{compact}\}.$$
This is the definition of the classical topological entropy using spanning and separated sets (see e.g.  \cite{Ru1}).

An equivalent definition for actions on compact spaces using open covers was introduced in \cite{Oll,E}.
In fact, a more general concept, the so-called \emph{topological pressure}, has been defined similarly and studied extensively 
(see \cite{Ru1,Ru2,Oll,Mi1,El,OllP}).

\medskip
As in the case of metric entropy (see Proposition~\ref{Proposition2.4}), the classical topological entropy is zero for some  actions of $\Z^k_+$, with $k > 1$):

\begin{proposition}[See {\cite[Corollary 2.3]{E}}]
Let $G = \Z^k_+$, $k >1$, let $g_1,g_2, \ldots g_k$ be generators of $G$, and let $T\colon G\times X\to X$ a uniformly continuous action of $G$ on a metric space $X$. If $h(g_i) < \infty$ for some $i\in\{1,2,\ldots, k\}$, then $h(T)=0$.
\end{proposition}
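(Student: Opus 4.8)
The plan is to exploit the mismatch between the linear growth rate encoded by $h(g_i)$ and the $k$-dimensional normalisation $\frac{1}{|N_n|}\sim n^{-k}$ built into the classical entropy of a $\Z_+^k$-action. After relabelling we may assume $h(g_1)<\infty$, and since the classical topological entropy of an amenable action does not depend on the chosen F\o lner sequence, I am free to compute $h(T)$ along any sequence of boxes all of whose sides tend to infinity. For a fixed integer $d\geq 1$ I would use the thin-in-one-direction F\o lner sequence $B_n=[0,n]\times[0,dn]^{k-1}$, for which $|B_n|=(n+1)(dn+1)^{k-1}\sim d^{k-1}n^k$; the extra freedom in $d$ is what will eventually drive the entropy to $0$.

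The heart of the argument is a two-step estimate of $r_{B_n}(\epsilon,T,K)$ for a fixed compact $K$ (it suffices to treat $h(T,K)$, and when $X$ is compact one has $h(T)=h(T,X)$, the case where the bookkeeping is cleanest). First, a \emph{sparsification}: writing any $g=g_1^{a_1}g_2^{b_2}\cdots g_k^{b_k}$ indexed by $B_n$ with $b_j=da_j+s_j$, $0\leq s_j<d$, commutativity gives the factorisation $g=g^{(s)}\circ(g_1^{a_1}g_2^{da_2}\cdots g_k^{da_k})$, where $g^{(s)}=g_2^{s_2}\cdots g_k^{s_k}$ ranges over the \emph{finite} set $F=\{0\}\times[0,d)^{k-1}$. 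Since each of the finitely many maps $g^{(s)}$ is uniformly continuous, given $\epsilon>0$ I can choose $\delta>0$ so that $(n,\delta)$-closeness along the $d$-sparse sublattice $P=\{g_1^{a_1}g_2^{da_2}\cdots g_k^{da_k}\colon 0\leq a_i\leq n\}$ forces $\epsilon$-closeness along all of $B_n$; hence $r_{B_n}(\epsilon,T,K)$ is bounded by the $(P,\delta)$-spanning number. Second, \emph{peeling off} the $g_1$-direction on the sparse lattice gives $r^{(P)}(\delta,T,K)\leq\prod_{(a_2,\ldots,a_k)\in[0,n]^{k-1}}r_n(\delta,g_1,\overline{g_2^{da_2}\cdots g_k^{da_k}K})\leq r_n(\delta,g_1)^{(n+1)^{k-1}}$ in the compact case. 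Combining the two steps, and crucially normalising by $|B_n|\sim d^{k-1}n^k$ while only the $(n+1)^{k-1}$ sparse perpendicular positions were paid for, I obtain
$$\frac{1}{|B_n|}\log r_{B_n}(\epsilon,T,K)\leq\frac{(n+1)^{k-1}}{(n+1)(dn+1)^{k-1}}\log r_n(\delta,g_1)\sim\frac{1}{d^{k-1}}\cdot\frac{1}{n+1}\log r_n(\delta,g_1).$$
Passing to $\limsup_n$ and then $\epsilon\to 0$ (so $\delta\to 0$) yields $h(T)\leq h(g_1)/d^{k-1}$, and since $k>1$ letting $d\to\infty$ forces $h(T)=0$.

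The main obstacle is the interaction of the sparsification with the control of the spanning numbers of the wandering translates $g_2^{da_2}\cdots g_k^{da_k}K$: for non-compact $X$ these compacta need not lie in a common compact set, so the bound $r_n(\delta,g_1,\overline{g^{(da')}K})\leq r_n(\delta,g_1)$ is not immediate and the $\limsup_n$ must be taken with care to stay uniform in the perpendicular index. The clean route is to prove everything first for $X$ compact---where $g^{(da')}K\subseteq X$ and the uniform bound is automatic---and then reduce the general case to compact pieces using the uniform continuity of the action. Equivalently, one can recast the two-step estimate as the comparison $h(T)\leq d^{-(k-1)}\,h(T\restriction_{H_d})$ for the subsemigroup $H_d=\Z_+ g_1+d\Z_+ g_2+\cdots+d\Z_+ g_k$ of index $d^{k-1}$, combined with the elementary bound $h(T\restriction_{H_d})\leq h(g_1)$ that comes from $g_1$ being one of its generators; this isolates the "infinite index forces vanishing'' mechanism as the conceptual core.
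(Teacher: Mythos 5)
The paper offers no proof of this proposition at all: it is quoted from Eberlein \cite{E} (Corollary 2.3) as a known fact. So your argument has to be judged against the statement itself, not against a proof in the text. For \emph{compact} $X$ your argument is correct, and the mechanism (sparsify along the $d$-coarse sublattice in the directions $2,\ldots,k$ using uniform continuity of the $d^{k-1}$ offset maps $g^{(s)}$; peel off the $g_1$-direction; collect the gain $d^{-(k-1)}$ from the normalization; let $d\to\infty$) is sound. Two bookkeeping remarks: first, the peeling step should be routed through maximal separated sets, since a product of spanning sets of the factors $q_{a'}K$ need not contain any point of the diagonal form $(q_{a'}y)_{a'}$ with $y\in K$, and since $r_n(\delta,g_1,Y)\le r_n(\delta,g_1,X)$ for $Y\subseteq X$ is not literally true (centers must lie in $Y$), whereas $s_n(\delta,g_1,Y)\le s_n(\delta,g_1,X)$ is; a maximal $(P,\delta)$-separated subset of $K$ is $(P,\delta)$-spanning and has cardinality at most $\prod_{a'}s_n(\delta/2,g_1,q_{a'}K)$. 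Second, you do not need F\o lner-independence at all: every $g\in[0,n]^k$ factors as $g^{(s)}p$ with $p\in P_n'=\{g_1^{a_1}g_2^{dc_2}\cdots g_k^{dc_k}\colon a_1\le n,\ 0\le c_j\le n/d\}$, so $r_{N_n}(\epsilon,T,X)\le s_n(\delta/2,g_1,X)^{(\lfloor n/d\rfloor+1)^{k-1}}$, and the standard normalization $(n+1)^{-k}$ already yields $h(T)\le h(g_1)/d^{k-1}$. (For compact $X$ there is also a soft alternative: the Variational Principle for $\Z_+^k$-actions recalled in Remark~\ref{VPam} plus Conze's theorem, Proposition~\ref{Proposition2.4}, give $h(T)=\sup_\mu h_\mu(T)=0$ at once.)

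The genuine gap is the non-compact case, and it cannot be closed: as printed (arbitrary metric space, each $g$ uniformly continuous), the statement is in fact false, so no ``reduction to compact pieces'' can exist. The obstruction you flagged is exactly the fatal one: the peeled factors $s_n(\delta,g_1,q_{c'}K)$ involve compacta that wander with $n$, and while $\limsup_n\frac1n\log s_n(\delta,g_1,K')\le h(g_1)$ holds for each \emph{fixed} compact $K'$, the implicit prefactor depends on $K'$; summing $(n+1)^{k-1}$ such prefactors against the normalization $(n+1)^{-k}$ destroys the estimate, and uniform continuity does not restore uniformity. Concretely, take $k=2$ and $X=\bigsqcup_{j\ge0}X_j$, where each $X_j=\{0,1\}^{\Z_+}$ carries the metric $d_j(x,y)=\sup\{w_i^{(j)}\colon x_i\ne y_i\}$ with $w_i^{(j)}=\bigl(1+\log(1+i/(j+1)^2)\bigr)^{-1}$, points in different pieces being at distance $1$; let $g_1$ act as the shift on each piece and $g_2$ as the identity map $X_j\to X_{j+1}$. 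These maps commute and are uniformly continuous (for $g_2$: if $x,y\in X_j$ agree on $[0,k_j(\delta)]$, where $k_j(\delta)=\max\{i\colon w_i^{(j)}>\delta\}\asymp (j+1)^2e^{1/\delta}$, then $d_{j+1}(x,y)\le w^{(j+1)}_{k_j(\delta)+1}\le 2\delta$ uniformly in $j$). Every compact subset of $X$ meets only finitely many pieces and the shift on each $(X_j,d_j)$ has entropy $\log 2$ whatever the weights, so $h(g_1)=\log 2<\infty$. Yet for $K=X_0$: if $x,y\in X_0$ differ at some coordinate $m\le k_n(\epsilon)$, then $d(g_2^nx,g_2^ny)=d_n(x,y)\ge w^{(n)}_m>\epsilon$, so they are $([0,n]^2,\epsilon)$-separated; hence $s_{N_n}(\epsilon,T,X_0)\ge 2^{k_n(\epsilon)}$ with $k_n(\epsilon)\asymp(n+1)^2e^{1/\epsilon}$, giving $\limsup_n\frac{1}{(n+1)^2}\log s_{N_n}(\epsilon,T,X_0)>0$ for small $\epsilon$ and in fact $h(T)=\infty$.

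Note that this example also kills, in the non-compact setting, the bound you called elementary, $h(T\restriction_{H_d})\le h(g_1)$ (already for $d=1$ it would give $h(T)\le h(g_1)=\log2$, contradicting $h(T)=\infty$): the failure is in the peeling itself, not in the $d\to\infty$ trick. So your proposal should be read as a correct proof for compact $X$; compactness (or some equivalent uniformity hypothesis, presumably present in Eberlein's original formulation) is not cosmetic here, and the concluding reduction you sketch is the one step of the argument that genuinely cannot be carried out.
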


\section{Equivalent definitions of the receptive topological entropy}\label{equiv}

\subsection{Receptive topological entropy via open covers} 

First we define the receptive topological entropy adapting the initial approach of Adler, Konheim and McAndrew \cite{AKMc}. 
Let $X$ be a compact topological space and for any $\aa\in\cov(X)$ we let $N(\aa)$ be the number of elements of a subcover of $\aa$ of the smallest possible cardinality. 

Let $T\colon G\times X\to X$ be an action of the semigroup $G$ on $X$ such that each of the maps $g\colon x \mapsto gx$ is continuous (briefly, $T$ is a continuous action), and let $\Gamma=(N_n)_{n\in\Z_+}$ be a regular system in $G$.

Given an integer $n \geq 1$ and $\aa\in\cov(X)$, set
$$\hh(T, \aa,\Gamma) = \limsup_{n \to \infty} \frac{1}{n} \log N(\aa^n_\Gamma).$$ 
Then define 
$$\hh(T,\Gamma)  = \sup\{\hh(T, \aa,\Gamma)\colon \aa\in\cov(X)\}.$$
When the system $\Gamma$ is clear from the context and no confusion is possible, we shall omit $\Gamma$.

\smallskip
The proof of Theorem~\ref{Theorem5.1} is very similar to the one presented in \cite[Section 7.2]{Wa} in the case $G = \Z$. 

\begin{lemma}\label{Wa7.7}
Let $G$ be a semigroup, $\Gamma=(N_n)_{n\in\Z_+}$ a regular system in $G$, and $T\colon G\times X\to X$ a continuous action of $G$ on a compact metric space $X$.
\begin{itemize}
\item[(a)] For all $\ep > 0$ and all integers $n \geq 1$, $r_n(\ep,T) \leq s_n(\ep,T) \leq r_n(\ep/2,T).$
\item[(b)] Let $\aa\in\cov(X)$ and let $\delta > 0$ be a Lebesgue number for $\aa$.  For all integers $n \geq 1$, $N(\aa^n_\Gamma) \leq r_n\left(\frac{\delta}{2},T\right)$. 
\item[(c)] For any $\epsilon>0$ and any $\gamma\in\cov(X)$ with  $\diam(V) \leq \epsilon$ for all $V \in \gamma$, for all integers $n\geq 1$, $s_n(\epsilon, T) \leq N(\gamma^n_\Gamma).$
\end{itemize}
\end{lemma}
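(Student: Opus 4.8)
The three statements are standard comparisons between spanning/separated sets and open-cover counts, adapted to the receptive setting; the key observation is that the regular-system combination $\aa^n_\Gamma=\bigvee_{g\in N_n}g^{-1}\aa$ behaves formally just like the classical Bowen iterate, so the same counting arguments go through once the dynamic balls $D_n(x,\ep)$ are used in place of the classical ones. Throughout I will use that $\aa^n_\Gamma$ is refined by, or refines, covers built from $\ep$-balls measured along all $g\in N_n$, exactly as in \cite[Section 7.2]{Wa}.

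For part (a), I would argue exactly as in the classical case. For $r_n(\ep,T)\le s_n(\ep,T)$, take a maximal $(n,\ep)$-separated set $E$; by maximality every point of $X$ lies in some $D_n(y,\ep)$ with $y\in E$ (otherwise $E\cup\{x\}$ would still be $(n,\ep)$-separated), so $E$ is $(n,\ep)$-spanning, giving $r_n(\ep,T)\le|E|=s_n(\ep,T)$. For $s_n(\ep,T)\le r_n(\ep/2,T)$, let $E$ be a maximal $(n,\ep)$-separated set and $F$ a minimal $(n,\ep/2)$-spanning set; the map sending each $x\in E$ to a point $y\in F$ with $x\in D_n(y,\ep/2)$ is injective, because if two distinct points $x,x'\in E$ mapped to the same $y$ then $d(gx,gy)\le\ep/2$ and $d(gx',gy)\le\ep/2$ for all $g\in N_n$, whence $d(gx,gx')\le\ep$ for all $g\in N_n$, contradicting that $E$ is $(n,\ep)$-separated. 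Thus $|E|\le|F|$, i.e. $s_n(\ep,T)\le r_n(\ep/2,T)$. Nothing here depends on $N_n$ being an interval, so the argument is verbatim the classical one with $N_n$ replacing $\{0,1,\dots,n\}$.

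For part (b), let $\delta$ be a Lebesgue number for $\aa$ and let $F$ be an $(n,\delta/2)$-spanning set for $X$ of size $r_n(\delta/2,T)$. For each $y\in F$ and each $g\in N_n$, the set $g\bigl(D_n(y,\delta/2)\bigr)$ has diameter at most $\delta$, hence is contained in some $A_{g,y}\in\aa$; equivalently $D_n(y,\delta/2)\subseteq g^{-1}A_{g,y}$ for every $g\in N_n$, so $D_n(y,\delta/2)\subseteq\bigcap_{g\in N_n}g^{-1}A_{g,y}$, which is an element of $\aa^n_\Gamma$. Since the balls $D_n(y,\delta/2)$ with $y\in F$ cover $X$, the corresponding elements of $\aa^n_\Gamma$ form a subcover, giving $N(\aa^n_\Gamma)\le|F|=r_n(\delta/2,T)$.

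For part (c), let $\gamma\in\cov(X)$ with $\diam(V)\le\ep$ for all $V\in\gamma$, and let $\{W_1,\dots,W_{N(\gamma^n_\Gamma)}\}$ be a minimal subcover of $\gamma^n_\Gamma$. Suppose $E$ is an $(n,\ep)$-separated set; I claim each $W_i$ contains at most one point of $E$, which forces $|E|\le N(\gamma^n_\Gamma)$ and hence $s_n(\ep,T)\le N(\gamma^n_\Gamma)$. Indeed, each $W_i$ has the form $\bigcap_{g\in N_n}g^{-1}V_g$ with $V_g\in\gamma$, so if $x,x'\in W_i$ then $gx,gx'\in V_g$ for every $g\in N_n$, whence $d(gx,gx')\le\diam(V_g)\le\ep$ for all $g\in N_n$; this contradicts $(n,\ep)$-separation unless $x=x'$. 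I do not anticipate a genuine obstacle here: all three parts are formal transcriptions of the classical estimates, and the only point requiring care is to verify that the elementary geometric inequalities about $\diam$ and dynamic balls are applied uniformly over all $g\in N_n$ rather than along a single orbit segment.
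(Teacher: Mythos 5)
Your proposal is correct and follows exactly the route the paper intends: the paper's own proof consists of citing the classical statements in Walters (\cite[Chapter 7, Remark (5)]{Wa} for (a) and \cite[Theorem 7.7(i),(ii)]{Wa} for (b) and (c)), and your argument is precisely that classical counting proof transcribed to the receptive setting, with the dynamic balls $D_n^\Gamma(x,\ep)$ and the index set $N_n$ replacing $\{0,1,\dots,n-1\}$. All three steps (maximal separated sets are spanning, the injection into a spanning set of radius $\ep/2$, and the diameter comparisons against $\aa^n_\Gamma$ and $\gamma^n_\Gamma$) are carried out correctly, so nothing further is needed.
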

\begin{proof}
(a) corresponds to \cite[Chapter 7, Remark (5)]{Wa}, (b) and (c) to \cite[Theorem 7.7(i) and (ii)]{Wa}.
\end{proof}

The following result corresponds to \cite[Theorem 7.6]{Wa}.

\begin{lemma}\label{Wa7.6}
Let $G$ be a semigroup, $\Gamma=(N_n)_{n\in\Z_+}$ a regular system in $G$, and $T\colon G\times X\to X$ a continuous action of $G$ on a compact metric space $X$.
Let $(\aa_n)_{n\in\Z_+}$ be a sequence in $\cov(X)$ with $\diam(\aa_n)\to0$. Then $\hh(T)=\lim_{n\to \infty}\hh(T,\aa_n)$.
\end{lemma}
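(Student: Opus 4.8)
The plan is to mimic closely the classical argument for \cite[Theorem 7.6]{Wa}, the only genuine ingredient being a monotonicity property of $\hh(T,-,\Gamma)$ with respect to the refinement preorder $\prec$ on $\cov(X)$. So first I would establish this monotonicity: if $\bb\prec\aa$ (that is, $\aa$ refines $\bb$), then $\hh(T,\bb)\leq\hh(T,\aa)$. The reason is that inverse images under the continuous self-maps $g$ preserve refinement, so $g^{-1}\bb\prec g^{-1}\aa$ for each $g\in N_n$, and finite joins preserve refinement as well, whence $\bb^n_\Gamma\prec\aa^n_\Gamma$ for every $n\geq1$. A cover that refines another admits a subcover of no larger cardinality (replace each member by one element of the coarser cover containing it), so $N(\bb^n_\Gamma)\leq N(\aa^n_\Gamma)$ for all $n$. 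Passing to $\limsup_{n}\frac1n\log(-)$ yields $\hh(T,\bb)\leq\hh(T,\aa)$.

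With monotonicity in hand, one inequality is immediate from the definition of $\hh(T)$ as a supremum: since $\hh(T,\aa_n)\leq\hh(T)$ for every $n$, we get $\limsup_{n\to\infty}\hh(T,\aa_n)\leq\hh(T)$.

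For the reverse inequality I would fix an arbitrary $\bb\in\cov(X)$ and exploit the hypothesis $\diam(\aa_n)\to0$ together with the Lebesgue number lemma on the compact metric space $X$. Choose a Lebesgue number $\delta>0$ for $\bb$; then for all $n$ large enough we have $\diam(\aa_n)<\delta$, so every element of $\aa_n$ has diameter below $\delta$ and is therefore contained in some element of $\bb$. This says precisely that $\bb\prec\aa_n$ for all large $n$, so the monotonicity of the first step gives $\hh(T,\bb)\leq\hh(T,\aa_n)$ for all large $n$, hence $\hh(T,\bb)\leq\liminf_{n\to\infty}\hh(T,\aa_n)$. Taking the supremum over all $\bb\in\cov(X)$ yields $\hh(T)\leq\liminf_{n\to\infty}\hh(T,\aa_n)$.

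Combining the two inequalities forces the sequence $(\hh(T,\aa_n))_{n\in\Z_+}$ to converge, with $\lim_{n\to\infty}\hh(T,\aa_n)=\hh(T)$, as claimed. I do not expect a serious obstacle here: the passage from the classical $\Z$-case to the present receptive, semigroup setting is absorbed entirely into the definition of $\aa^n_\Gamma$, and the only point requiring care is checking that refinement is preserved by the operations $\aa\mapsto g^{-1}\aa$ and by the finite join $\bigvee_{g\in N_n}$; both are routine, and crucially the argument is insensitive to the fact that $\hh(T,\aa,\Gamma)$ is defined by a $\limsup$ rather than a genuine limit, since the inequality $N(\bb^n_\Gamma)\leq N(\aa^n_\Gamma)$ holds termwise in $n$.
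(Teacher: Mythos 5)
Your proposal is correct and takes essentially the same route as the paper: both arguments rest on the Lebesgue number lemma to obtain $\bb\prec\aa_n$ for all large $n$, on the monotonicity of $\hh(T,-)$ under the refinement preorder, and on the definition of $\hh(T)$ as a supremum. The only differences are cosmetic: you spell out the monotonicity step explicitly (the paper leaves it implicit, as a standard fact from Walters), and your $\liminf$/$\limsup$ formulation absorbs in one stroke the case $\hh(T)=\infty$, which the paper treats separately.
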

\begin{proof}
Suppose that $\hh(T)$ is finite, let $\epsilon>0$ and let $\gamma\in\cov(X)$ with $\hh(T,\gamma)>\hh(T)-\epsilon$. If $\delta>0$ is a Lebesgue number for $\gamma$, there exists $N\in\Z_+$ such that $\diam(\aa_N)<\delta$, and so $\gamma\prec\aa_N$. Therefore, $\hh(T,\gamma)\leq \hh(T,\aa_N)$, and hence $\hh(T)-\epsilon<\hh(T,\aa_n)\leq\hh(T)$ for every integer $n\geq N$. This shows that $\hh(T)=\lim_{n\to \infty}\hh(T,\aa_n)$.

In case $\hh(T)=\infty$, let $a>0$ and let $\gamma\in\cov(X)$ with $\hh(T,\gamma)>a$. Then proceed as in the preceding case. 
\end{proof}

\begin{theorem}\label{Theorem5.1} 
Let $G$ be a semigroup, $\Gamma=(N_n)_{n\in\Z_+}$ a regular system in $G$, and $T\colon G\times X\to X$ a continuous action of $G$ on a compact metric space $X$. Then $\hh(T) = \th(T)$.
\end{theorem}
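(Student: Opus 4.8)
The plan is to prove the two inequalities $\hh(T)\le\th(T)$ and $\th(T)\le\hh(T)$ separately, translating between the open-cover quantity $N(\aa^n_\Gamma)$ and the spanning/separated quantities $r_n(\ep,T)$, $s_n(\ep,T)$ through the three comparison estimates collected in Lemma~\ref{Wa7.7}. Since $X$ is compact we may identify $\th(T)=\th(T,X)$ and work throughout with $\tr(\ep,T)=\lim_{\ep\searrow0}$ and $\ts(\ep,T)$; the whole argument is the receptive counterpart of the proof of \cite[Theorem 7.7]{Wa}, the only change being that every normalisation is by $\frac1n$ and every $g^{-1}\aa$ is bundled over $g\in N_n$ inside $\aa^n_\Gamma$, so the underlying combinatorics of covers, Lebesgue numbers and diameters is formally unchanged.

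First I would prove $\hh(T)\le\th(T)$. Fix $\aa\in\cov(X)$ and let $\delta>0$ be a Lebesgue number for $\aa$. By Lemma~\ref{Wa7.7}(b), $N(\aa^n_\Gamma)\le r_n\!\left(\tfrac{\delta}{2},T\right)$ for every $n\ge1$. Taking $\tfrac1n\log(\cdot)$ and then $\limsup_{n\to\infty}$ gives
\[
\hh(T,\aa)=\limsup_{n\to\infty}\frac1n\log N(\aa^n_\Gamma)\le\limsup_{n\to\infty}\frac1n\log r_n\!\left(\tfrac{\delta}{2},T\right)=\tr\!\left(\tfrac{\delta}{2},T\right)\le\th(T),
\]
the last inequality because $\tr(\ep,T)$ is non-increasing in $\ep$ and $\th(T)=\lim_{\ep\searrow0}\tr(\ep,T)$. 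Taking the supremum over $\aa\in\cov(X)$ yields $\hh(T)\le\th(T)$.

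For the reverse inequality $\th(T)\le\hh(T)$, fix $\ep>0$ and choose, by compactness, a cover $\gamma\in\cov(X)$ with $\diam(V)\le\ep$ for all $V\in\gamma$. Lemma~\ref{Wa7.7}(c) gives $s_n(\ep,T)\le N(\gamma^n_\Gamma)$ for all $n\ge1$, whence
\[
\ts(\ep,T)=\limsup_{n\to\infty}\frac1n\log s_n(\ep,T)\le\limsup_{n\to\infty}\frac1n\log N(\gamma^n_\Gamma)=\hh(T,\gamma)\le\hh(T).
\]
Since this holds for every $\ep>0$ and $\th(T)=\lim_{\ep\searrow0}\ts(\ep,T)$, we obtain $\th(T)\le\hh(T)$. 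Combining the two inequalities gives $\hh(T)=\th(T)$. The main point to watch is simply that the comparison estimates in Lemma~\ref{Wa7.7} are already stated for the receptive objects $\aa^n_\Gamma$, $r_n$, $s_n$, so no genuine obstacle remains; the only subtlety is the routine interplay between the $\limsup$'s (rather than honest limits) and the monotone $\ep\searrow0$ limits, which is handled exactly as above by using monotonicity of $\tr,\ts$ in $\ep$. One could alternatively package the whole thing via Lemma~\ref{Wa7.6} by feeding it a sequence of covers with vanishing diameter, but the direct estimate above is cleaner and self-contained.
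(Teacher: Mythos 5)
Your proof is correct and takes essentially the same route as the paper: both directions rest on the comparison estimates of Lemma~\ref{Wa7.7}, exactly as in the paper's proof. The only difference is cosmetic — the paper sandwiches $\hh(T,\aa_m)\leq \tr(1/m,T)\leq \ts(1/m,T)\leq \hh(T,\gamma_m)$ for specific sequences of ball-covers and then invokes Lemma~\ref{Wa7.6}, whereas you work with arbitrary covers and handle the $\ep\searrow 0$ passage directly via monotonicity of $\tr$ and $\ts$, thereby bypassing Lemma~\ref{Wa7.6} (as you yourself note).
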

\begin{proof}
For any integer $m \geq 1$ fix $\aa_m\in\cov(X)$ consisting of open balls of radius $2/m$ and $\gamma_m\in\cov(X)$ consisting of open balls of radius $1/ (2m)$. By Lemma~\ref{Wa7.7} we obtain, for all integers $n \geq 1$,
$$\frac{1}{n} \log N((\aa_m)_\Gamma^n) \leq \frac{1}{n} \log r_n\left(\frac{1}{m}, T\right) \leq \frac{1}{n} \log s_n\left(\frac{1}{m}, T\right) \leq \frac{1}{n} \log N((\gamma_m)_\Gamma^n).$$ 
Taking $\limsup_{n \to \infty}$, gives 
$$\hh(T, \aa_m) \leq \tr\left(\frac{1}{m}, T\right) \leq \ts\left(\frac{1}{m} , T\right) \leq \hh(T, \gamma_m) .$$
By Lemma~\ref{Wa7.6} this implies $\hh(T) \leq \th(T) \leq \hh(T)$, so $\hh(T) = \th(T)$.
\end{proof}

\subsection{Receptive topological entropy following Bowen's definition}

Next, we give another equivalent definition of the receptive topological entropy similar to the one by Bowen in \cite{Bow2}. Although this definition works for non-compact spaces as well, here we concentrate on the case of a compact metric space $X$.  Denote by $\mathcal B(X)$ the family of Borel subsets of $X$ and by  $[X]^{\leq \omega}$ {the family of all non-empty finite or countable subsets of} $X$.
 
Let $G$ be a semigroup, $\Gamma=(N_n)_{n\in\Z_+}$ a regular system in $G$ and $T\colon G\times X\to X$ a continuous action. Let also $\aa = \{ A_1, A_2, \ldots, A_k\}\in\cov(X)$. Modifying a definition in \cite{Bow2}, for every non-empty subset $E$ of $X$ we denote
$$n_{\aa}(E)= n_{T, \aa}(E) =\max\{n \in\Z_+\cup\{\infty\}\colon \aa \prec \{ gE\colon g \in N_n\}\}.$$
We set $n_{\aa}(E) = 0$ if $E$ is not contained in any element of $\aa$. Then set
$$\dd_{\aa} (E) = e^{-n_{\aa}(E)}\in\{r\in\R\colon r\leq 1\};$$
in \cite{Mi2} the notation $\diam_{\aa,T}(E) = \dd_{\aa} (E)$ was used. If $\mathcal E$ is a family of subsets of $X$, let 
$$\diam_{\aa,T}(\mathcal E ) = \sup_{E \in \mathcal E}  \dd_{\aa} (E)= \sup_{E \in \mathcal E} e^{-n_{\aa}(E)}.$$
Next, given a subset $Y$ of $X$, $\epsilon > 0$ and any $\lambda \in \R$, consider the family $\mathcal F_\epsilon(Y)$ of all finite or countable covers $\mathcal E$ of $Y$ with $\diam_{\aa,T}(\ee)  < \epsilon$. Clearly, 
$$\mathcal F_{\epsilon'}(Y)\subseteq \mathcal F_\epsilon(Y),\mbox{ whenever } \epsilon' < \epsilon.\eqno(*)$$

For a family $\ee\subseteq [X]^{\leq \omega}$ and any real $\lambda>0$ define
$$\dd_{\aa} (\ee, \lambda) = \sum_{E\in\mathcal E} \dd_{\aa} (E)^\lambda = \sum_{E\in\mathcal E} e^{- \lambda\, n_{\aa}(E)}.$$
%
Therefore, the function
\begin{equation}\label{RlY}
R_{\lambda,Y}(\epsilon) :=\inf \, \{ \dd_{\aa}(\ee, \lambda) : \ee \in \mathcal F_\epsilon(Y)\}
\end{equation}
increases when  $\ep> 0$ decreases. This allows us to define an outer measure $\m_{\aa, \lambda}$ on $X$ by setting
$$\m_{\aa,\lambda}(Y) = \lim_{\ep\to 0}R_{\lambda,Y}(\epsilon)\in\R_{\geq0}\cup\{\infty\}.$$ 

The function $\m_{\aa, \lambda}(Y)$ is non-increasing in $\lambda$ and if $0<\m_{\aa, \lambda}(Y)< \infty$  for some $\lambda>0$, then $\m_{\aa,\lambda'}(Y) = 0$ for $\lambda' > \lambda$ and $\m_{\aa,\lambda'}(Y) = \infty$ for $\lambda' < \lambda$. Set 
$$b_{\aa}(T, Y,\Gamma) = \inf \{ \lambda\in\R_{\geq0}\colon\m_{\aa,\lambda}(Y) = 0\}$$
and
$$b(T,Y,\Gamma) = \sup\{ b_{\aa}(T,Y,\Gamma)\colon \aa\in\cov(X),\ \aa\ \text{finite}\}.$$
Finally, set $b(T,\Gamma) = b(T,X,\Gamma) .$
When the system $\Gamma$ is clear from the context and no confusion is possible, we shall omit $\Gamma$.

\begin{remark}\label{lambda<0} 
Only real numbers $\lambda\geq0$ were considered above, as the negative values of $\lambda$ are irrelevant 
for the definition of $b(T,Y,\Gamma)$.  
Indeed, $\m_{\aa, \lambda}(Y)$ is non-increasing in $\lambda$ and then we take an $\inf$ in the definition of $b_\aa(T,Y,\Gamma)$. 
Actually, 
\begin{equation}\label{*eq}\m_{\aa, \lambda}(Y)=\infty \ \text{if}\ \ \lambda\leq 0.\end{equation}
Clearly, $\m_{\aa,0}(Y)=1$ if $Y$ is a singleton.
So it is enough to verify that $\m_{\aa,0}(Y)=\infty$ if $Y$ is not a singleton.
 Obviously $\dd_{\aa} (\ee, 0) = |\ee|$, so   $R_{0,Y}(\epsilon)=\inf \{ \dd_{\aa}(\ee, 0)\colon \ee \in \mathcal F_\epsilon(Y)\}= \inf \{|\ee|\colon\ee \in \mathcal F_\epsilon(Y)\} $. This implies $\m_{\aa,0}(Y)=\infty$, since $Y$ is not a singleton. 

Here is an alterative direct proof of \eqref{*eq} that works also for singleton $Y$ when  $\lambda<0$.
Let $\epsilon>0$ and $\mathcal E\in\mathcal F_\varepsilon(Y)$. Then for every $E\in\mathcal E$, $e^{-n_\aa(E)}<\epsilon$, so
$e^{-\lambda n_\aa(E)}\geq {\ep^{\lambda}},$ so $\mathcal D_\aa(\mathcal E,\lambda)=\sum_{E\in\mathcal E}e^{-\lambda n_\aa(E)}\geq \ep^{\lambda}$
and $R_{\lambda,Y}(\epsilon)\geq \ep^{\lambda}$. Therefore,  $\m_{\aa,\lambda}(Y) = \lim_{\ep\to 0}R_{\lambda,Y}(\epsilon)\geq  \lim_{\ep\to 0} \ep^{\lambda}=\infty$.
\end{remark}

%

In the next theorem the inequality $b(T)\leq\th(T)$ holds for any regular system and also when the semigroup is not commutative.
We follow the line of \cite{Bow2}, however some significant modifications will be necessary.

\begin{theorem}\label{Theorem5.3}
Let $G$ be a  commutative finitely generated semigroup with a standard regular system $\Gamma=(N_n)_{n\in\Z_+}$ and let $T\colon G\times X\to X$ be a continuous action of $G$ on a compact metric space $X$. Then $b(T)=\th(T)$.
\end{theorem}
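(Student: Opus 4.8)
The plan is to establish the two inequalities $b(T)\le\th(T)$ and $b(T)\ge\th(T)$ separately, using Theorem~\ref{Theorem5.1} to identify $\th(T)=\hh(T)=\sup_\gamma\hh(T,\gamma)$, and relying throughout on the elementary but decisive observation that, for a finite $\gamma\in\cov(X)$ and $\emptyset\ne E\subseteq X$,
$$n_\gamma(E)\ge n\quad\Longleftrightarrow\quad E\ \text{is contained in some element of}\ \gamma^n_\Gamma .$$
Indeed, $n_\gamma(E)\ge n$ means exactly that for every $g\in N_n$ there is $A_g\in\gamma$ with $gE\subseteq A_g$, i.e.\ $E\subseteq\bigcap_{g\in N_n}g^{-1}A_g\in\gamma^n_\Gamma$; and conversely. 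This turns statements about the levels $n_\gamma(E)$ into statements about membership in the refined covers $\gamma^n_\Gamma$, which are the objects controlling $\hh(T,\gamma)$.

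For $b(T)\le\th(T)$ I would fix a finite $\gamma$ and show $b_\gamma(T,X)\le\hh(T,\gamma)$. Pick a minimal subcover $\ee_n$ of $\gamma^n_\Gamma$, so $|\ee_n|=N(\gamma^n_\Gamma)$ and, by the observation, every $E\in\ee_n$ has $n_\gamma(E)\ge n$; hence $\diam_{\gamma,T}(\ee_n)\le e^{-n}$ and $\dd_\gamma(\ee_n,\lambda)\le N(\gamma^n_\Gamma)\,e^{-\lambda n}$ for every $\lambda\ge0$. Given $\epsilon>0$, choosing $n$ with $e^{-n}<\epsilon$ puts $\ee_n\in\mathcal F_\epsilon(X)$, so $R_{\lambda,X}(\epsilon)\le N(\gamma^n_\Gamma)\,e^{-\lambda n}$; letting $\epsilon\to0$ forces $n\to\infty$, and since $\limsup_n\frac1n\log N(\gamma^n_\Gamma)=\hh(T,\gamma)<\lambda$ the right-hand side tends to $0$. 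Thus $\m_{\gamma,\lambda}(X)=0$ whenever $\lambda>\hh(T,\gamma)$, whence $b_\gamma(T,X)\le\hh(T,\gamma)$, and taking the supremum over $\gamma$ gives $b(T)\le\hh(T)=\th(T)$. As anticipated in the remark preceding the statement, this half uses neither commutativity nor standardness.

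The reverse inequality is the substantial part, and it is here that the hypotheses on $G$ and $\Gamma$ are used. The backbone is the self-similarity relation
$$\gamma^n_\Gamma\ \equiv\ \bigvee_{u\in N_m}u^{-1}\bigl(\gamma^{\,n-m}_\Gamma\bigr)\qquad(0\le m\le n),$$
an equivalence of covers in the refinement preorder which follows from the standardness identity $N_n=N_mN_{n-m}$ (and $u^{-1}(w^{-1}\gamma)=(wu)^{-1}\gamma$), commutativity being used afterwards to match the two factorizations $N_N=N_mN_{N-m}=N_{N-m}N_m$ with the scale structure. Given $\lambda<\th(T)$ I would fix $\epsilon>0$ with $\ts(\epsilon,T)>\lambda$ and a finite $\gamma$ with $\diam(\gamma)\le\epsilon$, and prove $\m_{\gamma,\lambda}(X)>0$, which yields $b_\gamma(T,X)\ge\lambda$ and, letting $\lambda\nearrow\th(T)$, the claim. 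To this end I would run a mass-distribution (Frostman-type) argument: using maximal $(N,\epsilon)$-separated sets $E_N$ with $|E_N|=s_N(\epsilon,T)\gtrsim e^{\lambda N}$, distribute unit mass uniformly on $E_N$ and estimate the mass a set $E$ with $n_\gamma(E)=m$ can carry. By the first observation such $E$ sits in a single element of $\gamma^m_\Gamma$, hence is ``$(m,\epsilon)$-small'', and the factorization $N_N=N_mN_{N-m}$ together with the relation above limits how many points of $E_N$ lie in $E$. Summing over an arbitrary cover $\ee\in\mathcal F_\epsilon(X)$ then forces $\dd_\gamma(\ee,\lambda)\ge c$ with $c>0$ independent of $\ee$, so $R_{\lambda,X}(\epsilon)\ge c$ for all small $\epsilon$ and $\m_{\gamma,\lambda}(X)\ge c>0$.

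The main obstacle is precisely this last estimate: bounding, \emph{uniformly in the two scales}, the number of $(N,\epsilon)$-separated points contained in an $(m,\epsilon)$-small set by essentially $e^{\lambda(N-m)}$, and doing so in a way that survives covers whose elements have arbitrarily large and mutually different levels $n_\gamma(E)$. One cannot simply invoke submultiplicativity of $N(\gamma^n_\Gamma)$: for actions such as those of $\Z_+^k$ the quantity $\log N(\gamma^n_\Gamma)$ grows only linearly in $n$ while $|N_n|$ grows polynomially, so the crude product bounds produced by joining over all of $N_m$ are hopelessly lossy. Instead one must exploit the self-similarity relation to carry out a delicate multi-scale bookkeeping, comparing $(N,\epsilon)$-separated subsets of an $(m,\epsilon)$-small set with $(N-m,\epsilon)$-separated sets via the commutative factorization. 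This is the step that makes the argument considerably more intricate than in Bowen's original cyclic setting, and it is where the bulk of the effort and computation resides.
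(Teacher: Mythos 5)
Your first inequality is correct and is exactly the paper's own argument (valid, as you note and as Remark~\ref{Remark5.4} records, for arbitrary semigroups and regular systems): every element of a minimal subcover $\ee_n$ of $\gamma^n_\Gamma$ has $n_\gamma$-level at least $n$, so $\dd_\gamma(\ee_n,\lambda)\le N(\gamma^n_\Gamma)e^{-\lambda n}\to 0$ whenever $\lambda>\hh(T,\gamma)$, giving $b(T)\le\hh(T)=\th(T)$ via Theorem~\ref{Theorem5.1}. The reverse inequality, however, is where your text is a plan rather than a proof, and the plan breaks down at exactly the point you yourself call ``the main obstacle''. The route you indicate for the decisive estimate --- points of an $(N,\epsilon)$-separated set lying in a set of level $m$ are indistinguishable over $N_m$, hence comparable to an $(N-m,\epsilon)$-separated set --- cannot produce the exponent $\lambda$: those counts grow at rate $\ts(\epsilon,T)>\lambda$ along a subsequence, and since the levels $m$ vary freely over the elements of an arbitrary cover in $\mathcal F_\epsilon(X)$, you cannot tune $N$ to avoid all bad scales $N-m$ at once; a uniform upper bound with exponent strictly below the entropy is precisely what one cannot have. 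What the mass-distribution principle actually requires is the weaker, \emph{fractional} statement that no level-$m$ set captures more than roughly $e^{-\lambda m}$ of the mass of the (limit of the) uniform measures on maximal separated sets --- an equidistribution property that uniform measures on separated sets need not have. Producing a Borel probability measure $\mu$ with that property is tantamount to producing one with $\underline{S}_\mu(T)\ge\lambda$ and then applying the paper's own mass-distribution bound $\underline{S}_\mu(T)\le c(T)=b(T)$ (Theorem~\ref{Proposition6.1}); but the existence of such measures for $\lambda$ close to $\th(T)$ is the local Variational Principle, which is an open problem of this paper (Conjecture~\ref{lVPconj}; see also Conjecture~\ref{MainConj} and Question~\ref{Ques99}). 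So your scheme cannot be completed along the lines sketched without solving that problem.

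The paper's proof of $\hh(T)\le b(T)$ is measure-free and runs in the opposite logical direction: it takes $\lambda$ slightly \emph{above} $b(T)$, so that $\m_{\bb,\lambda}(X)=0$ for every finite open cover $\bb$; from this it extracts (after an enlargement step that converts a fine countable cover into a finite \emph{open} cover without increasing the levels $n_\bb$) a finite cover $\ee=\{V_1,\dots,V_m\}$ with $\dd_{\bb}(\ee,\lambda)<1$, and then counts. Itinerary sets $C(V_{j_1},\dots,V_{j_s})$ are built along the single generator $f_\ell$ and shown to refine $\bigvee_{r=0}^{n-1}f_\ell^{-r}\aa$, while the remaining $\ell-1$ generators are absorbed by the crude factor $\bigl(\sum_{r\ge 0}e^{-\lambda r}\bigr)^{\ell-1}=(1-e^{-\lambda})^{-(\ell-1)}$ --- essentially the ``lossy'' product bound you dismissed, which is harmless here because it is independent of $n$ and multiplies the convergent series $\sum_{s}(\dd_\bb(\ee,\lambda))^s$ rather than competing with an exponential in $n$. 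The outcome is $N(\aa^n_\Gamma)\le K e^{\lambda n}$ with $K$ independent of $n$, hence $\hh(T,\aa)\le\lambda$ for a cover $\aa$ chosen with $\hh(T,\aa)>\hh(T)-\epsilon$, and $\hh(T)\le b(T)$ follows. To complete your write-up you would need to carry out this (or an equivalent) counting argument; as it stands, the hard half of the theorem is missing.
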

\begin{proof} 
In view of Theorem~\ref{Theorem5.1} we show that $b(T)=\hh(T)$. First, we show that $b(T) \leq \hh(T)$. 
Let $\aa = \{ A_1, A_2, \ldots, A_k\}\in\cov(X)$, let $n \geq 1$ be an integer, and let  $\ee_n = \{ E_1, E_2 \ldots, E_m\}$ be a subcover of $\aa_\Gamma^n$ with $m = |\ee_n| = N(\aa_\Gamma^n)$. By the definition of $\aa_\Gamma^n$, any $E_i$ can be  written in the form $E_i =\bigcap_{g \in N_n}g^{-1}A_g$ for some $A_g \in \aa$. So, for any $g \in N_n$ we get $g E_i \subseteq A_g$. Hence for every $E_i$ we have $n_{\aa}(E_i) \geq n$. Therefore, given $\lambda\in\R_{\geq0}$,
$$\dd_{\aa}(\ee_n, \lambda)= \sum_{i=1}^m e^{-\lambda \, n_{\aa}(E_i)} \leq m\, e^{-\lambda n} = N(\aa^n_\Gamma)\, e^{-\lambda n}= e^{-\lambda n + \log N(\aa^n_\Gamma)} =  e^{n (-\lambda  + \frac{1}{n} \log N(\aa^n_\Gamma))}.$$
If $\lambda > \hh(T, \aa) = \limsup_{n\to\infty} \frac{1}{n} \log N(\aa^n_\Gamma)$,  then taking $\delta > 0$ so that $\lambda > \hh(T, \aa) + \delta$, we get 
$\dd_{\aa} (\ee_n, \lambda) < e^{-n \delta}$ for all sufficiently large $n\geq1$, so we must have $\m_{\aa, \lambda}(X) = 0$. This demonstrates that 
$b_{\aa}(T,X) \leq \hh(T, \aa)\leq\hh(T)$. Hence $b(T)=b (T,X)  \leq \hh(T)$.

\ms
Next, we show that $\hh(T) \leq b(T)$. The proof is articulated in four claims. 

Assume that $b(T) < \infty$; otherwise there is nothing to prove. Fix a small constant $\epsilon > 0$ and an arbitrary $\lambda\in\R_{\geq0}$ with
$$b(T) < \lambda <  b(T) + \epsilon .$$
The definition of $b(T)$ now implies $b_{\bb} (T,X) < \lambda$ and so $\m_{\bb, \lambda}(X) = 0$ for any finite $\bb\in\cov(X)$. Our aim will be to deduce from this that $\hh(T) \leq b(T) + 2 \epsilon $. Since $\epsilon $ was chosen arbitrarily small, this will  imply that  $\hh(T) \leq b(T)$. 

Let us fix a finite set of generators  $f_1,f_2,\ldots, f_\ell$ of $G$, so that $\Gamma=(N_n)_{n\in\Z_+}$ is  the standard regular system defined by $N_1=\{0,f_1,f_2,\ldots,f_\ell\}$. 
Our next step is to fix $\aa = \{ A_1, A_2, \ldots, A_k\}\in\cov(X)$ with 
\begin{equation}\label{5.3}
\hh(T, \aa) = \limsup_{n\to\infty} \frac{1}{n} \log N(\aa^n_\Gamma) > \hh(T) - \epsilon . 
\end{equation}
Let $\delta > 0$ be a Lebesgue number for $\aa$. Since $N_1$ is finite and $f_i\colon X \to X$ is uniformly continuous for every $i\in\{1,\ldots,\ell\}$, there exists $\delta _1 \in (0, \delta/2)$ such that $g  B_{\delta_1}(x) \subseteq B_\delta(g x)$ {for every} $g \in N_1$ {and every} $x\in X$.
That is, for any $g\in N_1$ and any $x, y  \in X$, $d(x,y) < \delta_1$ implies $d(gx, gy) < \delta$, so $\aa  \prec \{gB_{\delta_1}(x)\colon g\in N_1, x\in X\}$. There exists a finite cover $\bb = \{ B_1,B_2, \ldots, B_p\}$ 
of $X$ consisting of open balls of radius $\delta_1$. By what we said above, 
$$\aa \prec \{gB_i: g\in N_1, i\in\{1,2, \ldots, p\}\}.$$
Let $\delta_2 \in (0, \delta_1)$ be a Lebesgue number for $\bb$. Hence, $\bb \prec \{E\}$ for every subset $E$ of $X$ with $\diam(E) < \delta_2$.

As mentioned above, the choice of $\lambda$ implies $\m_{\bb, \lambda} (X) = 0$, so in particular there exists a finite or countable cover 
$\ee' = \{E_i' : i\in I\}$ of $X$ with $\diam(E_i') < \delta_2$ for all $i\in I$ and $\dd_{\bb}(\ee', \lambda) < 1$.  Next, for every $i\in I$ let $n_i:= n_\bb(E_i')$ and for every 
$g \in N_{n_i}$ fix a member $B_g \in \bb$, such that $gE_i' \subseteq B_g$ (we noted above that $\bb\prec\{gE_i'\}$).
Now put, for every $i\in I$, 
$$E_i:= B_{\delta_2 -\diam(E'_i)}(E'_i) \ \cap \ \bigcap_{g\in N_{n_i}}g^{-1} B_g$$
(by definition $B_\gamma(M) = \bigcup_{z\in M} B_\gamma(z)$ for every $\gamma > 0$ and every subset $M$ of $X$).
For each $i\in I$, this open set $E_i$ is ``slightly enlarging"  the set $E_i'$ to an open set with diameter $< 2\delta_2$; then $n_\bb(E_i) \leq n_\bb(E'_i) = n_i$. The definition of $E_i$ implies $n_\bb(E_i) = n_i = n_\bb(E'_i)$: indeed, for every $g\in N_{n_i}$ we have $E_i \subseteq g^{-1}B_g$, so $gE_i \subseteq B_g$, and hence $\dd_{\bb}(\ee_1, \lambda)=\dd_{\bb}(\ee', \lambda)<1$,  where $\ee_1 = \{E_i\colon i\in I\}\in\cov(X)$. 

By choosing a finite subcover $\ee = \{V_1,V_2, \ldots, V_m\}$ of $\ee_1$ we get a finite open cover of $X$ with $\dd_{\bb}(\ee, \lambda) \leq \dd_{\bb}(\ee_1, \lambda) < 1$.  Let 
$$J =  \{ 1,2, \ldots, m\}, \quad  a_j:= n_{\bb}(V_j)\ \text{for}\ i \in  J \quad \text{and} \quad M = \max_{1 \leq j \leq m} a_j.$$
Note that $a_j \geq 1$ for all $j \in J$. This implies
\begin{align*}\label{5.8}
\sum_{s=1}^\infty \; \sum_{j_1, \ldots, j_s = 1}^m e^{-\lambda(a_{j_1} + \ldots + a_{j_s}) }
= &  \sum_{s=1}^\infty \: \left(\sum_{j_1= 1}^m e^{-\lambda\; a_{j_1}}\right) \;  \left(\sum_{j_2= 1}^m e^{-\lambda\; a_{j_2}}\right) \cdots
\left(\sum_{j_s= 1}^m e^{-\lambda\; a_{j_s}}\right) \nonumber=\\
 = & \sum_{s=1}^\infty  \left(\sum_{i=1}^m  e^{-\lambda a_i}  \right) ^s= \sum_{s=1}^\infty (\dd_{\bb}(\ee, \lambda))^s < \infty
\end{align*}
(the last series converges as $\dd_{\bb}(\ee, \lambda) < 1$). 

For the sake of brevity {set $f = f_\ell$ } and let $C(V_{j}):= V_{j}$ for every $j\in J$. For $s>1$ and $j_1,\ldots,j_s\in J$, let 
\begin{align*}
C(V_{j_1}, \ldots, V_{j_s}) &=\left\{ x \in V_{j_1} : f^{a_{j_1}  +\ldots + a_{j_{r-1}}  } (x) \in V_{j_r}, \:  \forall r\in\{2, \ldots,s\} \right\}\\
& = V_{j_1} \cap f^{-a_{j_1}}  V_{j_2} \cap f^{-(a_{j_1}    +a_{j_2} ) } V_{j_3} \cap \ldots \cap   f^{-(a_{j_1}  +\ldots + a_{j_{s-1}})}V_{j_s};
\end{align*}
clearly $C(V_{j_1}, \ldots, V_{j_s})$ is an open subset of $X$. 

For any pair of integers $s\geq 1$ and $n \geq 1$  let 
$$\mathcal J_{n,s} : = \{\bar \jmath = (j_1, \ldots, j_s) \in J^s\colon n\leq a_{j_1} + \ldots   + a_{j_s} \leq  n+M \}.$$
As $a_j\geq 1$ for all $j\in  J$, it is clear that $\mathcal J_{1,1} = J$, while $\mathcal J_{n,s}\ne \emptyset $ implies $s \leq n + M$, hence 
$\sum_{s\geq 2}\sum_{\bar \jmath \in \mathcal J_{n,s} }1 <\infty$ for all $n\geq1$.  

\begin{claim} 
For any integer $n \geq 2M$, the family
$$\cc_n = \{ C(V_{j_1}, \ldots, V_{j_s}) : s\geq 2 \: ,\: \bar \jmath = (j_1, \ldots, j_s)\in\mathcal J_{n,s}  \}$$%
is an open cover of $X$.
\end{claim}
\begin{proof}
Fix $n \geq 1$, and pick $x \in X$. Define recursively indices $j_1, j_2, \ldots$ such that:
\begin{itemize}
\item[(1)] $x \in V_{j_1}$; 
\item[(2)]  if $s>1$ and $j_1, j_2, \ldots, j_{s-1}$ are already defined, pick $j_s$ such that $f^{ a_{j_1}+\ldots + a_{ j_{s-1} }} (x)\in V_{j_s}$.  
\end{itemize}
Obviously, $x\in C(V_{j_1}, \ldots, V_{j_s})$ for every integer $s\geq1$. Moreover, since $a_j\geq1$ for all $j\in I$, the sequence 
$$a_{j_1}< a_{j_1}+ a_{j_2}< \ldots <  a_{j_1}+ \ldots + a_{ j_{s} }< \ldots $$
is strictly increasing and the gaps between two adjacent members are $\leq M$. As $n \geq 2M>a_{j_1} $, there exists an $s\geq 2$ such that 
$$a_{j_1}+ \ldots + a_{ j_{s-1} }< n \leq a_{j_1}+ \ldots + a_{ j_{s-1} } + a_{ j_{s} },$$
then obviously, $a_{j_1}+ \ldots + a_{ j_{s-1} } + a_{ j_{s} }\leq n + M$, as $a_{ j_{s} }\leq M$. Hence, 
$C(V_{j_1}, \ldots, V_{j_r}) \in \cc_n$. This ends up the proof, since $x\in C(V_{j_1}, \ldots, V_{j_s})$, as mentioned above. 
\end{proof}

Recalling that $f = f_\ell$, for any integer $n \geq 1$ set
$$\aa^{n,\ell} = \bigvee_{r=0}^{n-1} f_\ell^{-r}\aa = \left\{ \bigcap_{r=0}^{n-1} f_\ell^{-r} A_{i(r)} : i : \{0,1, \ldots, n-1\} \to \{0,1,\ldots,k\} \right\} .$$

\begin{claim}\label{Claim 2} 
For all $n \geq 2M$, $\aa^{n,\ell} \prec \cc_n$.
\end{claim}
\begin{proof}
It is enough to prove the following statement for all integers $s\geq1$:
\medskip

$P(s):$ If a set of the form $C = C(V_{j_1}, \ldots, V_{j_s}) $ belongs to $\cc_n$ for an integer $n \geq 2M$, then $\aa^{n,\ell}\prec \{C\}$.

\medskip
We argue by induction on $s\geq1$. To prove $P(1)$, let $C = C(V_{j}) = V_j\in \cc_n$ for some integer $n \geq 1$. Then  $n\leq a_{j}$. So $\aa^{n,\ell} \prec \aa^{a_{j}, \ell}$, 
hence it suffices to check that $\aa^{a_{j}, \ell} \prec \{C\}$, or equivalently,  $gC$ is contained in some element of $\aa$ for every $g = f_\ell^p$, with $0 \leq p < a_{j}$. 
To this end note that  $C = V_{j}$ and $\diam(V_{j}) < \delta_2$ implies that $C$ is contained in some element $B_i$ of $\bb$. Now $a_{j} = n_{\bb}(V_{j})$ 
means that 
$gC$ is contained in some element of $\bb$, and therefore in some element of $\aa$ (since $\aa \prec \bb$) for all $g \in N_{a_{j}}$. 
Thus, $f_\ell^p C$ is contained in some element of $\aa$.

Assume that $s > 1$  and the statement $P(r)$ is true for all $1\leq r \leq s-1$. We will prove $P(s)$. Let $C = C(V_{j_1}, \ldots, V_{j_s})$ be such that $C \in \cc_n$ for some $n \geq 1$.  
Then $n\leq  a_{j_1}  + \ldots + a_{j_{s}} \leq n +M$. Set $u = a_{j_1}$, $n' = n-u$  and
$C' = C(V_{j_2}, \ldots, V_{j_s})$.
It is easy to see that 
\begin{equation}\label{LastLabel}
f_\ell^u C\subseteq C'.
\end{equation}
Since $n \leq u + (a_{j_2} + \ldots + a_{j_s}) \leq n+M$ implies that $n' \leq a_{j_2} + \ldots + a_{j_s} \leq n'+M$ and consequently $C' \in \cc_{n'}$, 
our inductive assumption gives $\aa^{n', \ell} \prec \{C'\}$. Now we show that $\aa^{n,\ell} \prec \{ C\}$, i.e., $gC$ is contained in some $A_i$ for every 
$g = f_\ell^p$ with $0\leq p < n$. The case $p < u$ follows from the argument proving $P(1)$ above, so assume that $g = f_\ell^p $  for some integer $p$ with   $u \leq p < n-1$. Then $q = p-u  < (n-1) - u =  n'-1$, so $\aa^{n', \ell} \prec\{ C'\}$ implies that $f_\ell^{q} C' \subseteq A_i$ for some $i\in\{1,\ldots,k\}$. 
 From (\ref{LastLabel}) we deduce that  $f_\ell^pC = f_\ell^q f_\ell^u C \subseteq f_\ell^q C' \subseteq A_i$.
This proves the claim. 
\end{proof}

Now we estimate $N(\aa^n_\Gamma)$. To this end, for $n,s\geq  1$ define the set 
$$\Xi_{n,s}:=\{\xi=(r_1,\ldots,r_{\ell-1};j_1,\ldots,j_s)\in\Z_+^{\ell-1}\times J^s\colon 0 \leq r_i<n,\: \forall i\in\{1,\ldots,\ell-1\},\; r_1+ \ldots + r_{\ell-1} +  a_{j_1} + \ldots   + a_{j_s} \leq  n+M \}.$$
As $a_i \geq 1$ for all $i \in  J$, it is clear that $\Xi_{n,s}\ne \emptyset $ implies $s \leq n + M$, hence $\sum_{s=1}^\infty|\Xi_{n,s}| <\infty$ for all $n\geq1$.  

\begin{claim}\label{New:Claim} 
For all $n \geq 2M$, $N(\aa^n_\Gamma)\leq \sum_{s=1}^\infty \; |\Xi_{n,s}|$.
\end{claim}
\begin{proof}
By definition
\begin{align*}
\aa^n_\Gamma & = \bigvee_{r_1 +  \ldots + r_\ell < n\atop{r_1\cdots r_\ell \geq 0}} f_1^{-r_1}\cdots f_\ell^{-r_\ell} \aa =  \bigvee_{r_1 +  \ldots + r_{\ell-1} \leq p \atop{r_1\cdots r_{\ell-1} \geq 0}\, ,0 \leq p < n} f_1^{-r_1}\cdots f_{\ell-1}^{-r_{\ell-1}} \left( \bigvee_{r=0}^{n-p-1} f_\ell^{-r}\aa \right) =\\
& =  \bigvee_{r_1 + \ldots + r_{\ell-1} \leq p \atop{r_1 \cdots r_{\ell-1} \geq 0}\, ,0 \leq p < n} f_1^{-r_1}\cdots f_{\ell-1}^{-r_{\ell-1}} \left( \aa^{n-p , \ell} \right) .
\end{align*}
Using Claim~\ref{Claim 2}, the above gives
\begin{align*}
\aa^n_\Gamma & \prec  \bigvee_{r_1 + \ldots + r_{\ell-1} \leq p \atop{r_1 \cdots r_{\ell-1} \geq 0}\, ,0 \leq p < n} f_1^{-r_1}\cdots f_{\ell-1}^{-r_{\ell-1}} \left(\cc_{n-p} \right)=\bigvee_{r_1+\ldots+r_{\ell-1} + q \leq n \atop{r_1\cdots r_{\ell-1} \geq 0, q > 0 }} f_1^{-r_1}\cdots f_{\ell-1}^{-r_{\ell-1}} \left( \cc_{q} \right)=\\
& =  \bigvee_{r_1 + \ldots + r_{\ell-1} + q \leq n \atop{r_1\cdots r_{\ell-1}\geq0, q > 0 }} f_1^{-r_1}\cdots f_{\ell-1}^{-r_{\ell-1}} \left(\{C(V_{j_1}, \ldots,V_{j_s}) \colon s\geq 1,\: (j_1,\ldots,j_s)\in J^s,\:  q \leq a_{j_1} + \ldots   + a_{j_s} \leq  q + M \} \right).
\end{align*}
Therefore
\begin{align*}
N(\aa^n_\Gamma)& \leq | \{ f_1^{-r_1}\cdots f_{\ell-1}^{-r_{\ell-1}} C(V_{j_1}, \ldots, V_{j_s})\colon s\geq 1,\: (j_1,\ldots,j_s)\in J^s,\:  r_1\cdots r_{\ell-1} \geq 0,\: q > 0,\\
& \qquad r_1 + \ldots + r_{\ell-1} + q \leq  n\:, \: q \leq a_{j_1} + \ldots   + a_{j_s} \leq  q + M \} | .
\end{align*}
Notice that the inequalities $r_1 + \ldots + r_{\ell-1} + q \leq n$, $q > 0$ and  $a_{j_1} + \ldots   + a_{j_s} \leq  q + M$ imply
$$r_1 + \ldots + r_{\ell-1} +  a_{j_1} + \ldots   + a_{j_s} \leq (n-q) + (q+M) =  n + M .$$
Thus, 
\begin{align*}
N(\aa^n_\Gamma) & \leq  \sum_{s=1}^\infty | \{ f_1^{-r_1}\cdots f_{\ell-1}^{-r_{\ell-1}} C(V_{j_1}, \ldots, V_{j_s})\colon \\ &\quad \quad \quad(j_1,\ldots,j_s)\in J^s,\: 0 \leq r_1, \cdots ,r_{\ell-1}  < n, \; 
r_1 + \ldots + r_{\ell-1} +  a_{j_1} + \ldots   + a_{j_s} \leq  n + M \} |\\
& \leq \sum_{s=1}^\infty \; |\Xi_{n,s}| ,
\end{align*} 
This proves Claim~\ref{New:Claim}.
\end{proof}

\begin{claim}\label{Claim 4}   
For all $n \geq 2M$ and $C =  \frac{1}{(1- e^{-\lambda})^{\ell-1}} $, 
$$\sum_{s=1}^\infty \; |\Xi_{n,s}| \leq  C\; e^{\lambda (n+M)} \sum_{s=1}^\infty (\dd_{\bb}(\ee, \lambda))^s .$$
\end{claim}
\begin{proof} 
Since $\lambda \geq 0$, and $\xi \in \Xi_{n,s}$ for some integer $s\geq1$ implies $r_1+ \ldots + r_{\ell-1} +  a_{j_1} + \ldots   + a_{j_s} \leq  n+M$, it follows that
$$1 \leq e^{\lambda[n+M - (r_1+ \ldots + r_{\ell-1} + a_{j_1} + \ldots   + a_{j_s} )]} = e^{-\lambda (r_1+ \ldots + r_{\ell-1} + a_{j_1} + \ldots   + a_{j_s} ) + \lambda (n+M)} =  e^{-\lambda (r_1+ \ldots + r_{\ell-1} + a_{j_1} + \ldots   + a_{j_s} )}  e^{\lambda (n+M)} .$$
Therefore
\begin{equation}\label{5.12}
\sum_{s=1}^\infty |\Xi_{n,s}| = \sum_{s=1}^\infty  \sum_{\xi \in \Xi_{n,s} }1 \leq
\left(\sum_{s=1}^\infty\:\sum_{\xi \in  \Xi_{n,s} } e^{-\lambda (r_1 + \ldots + r_{\ell-1} + a_{j_1} + \ldots   + a_{j_s} )} \right) e^{\lambda (n+M)}.
\end{equation}
Using the fact that $0 \leq r_i <n$ for all $i\in\{1,\ldots,\ell-1\}$, we deduce
\begin{align*}
\sum_{\xi \in  \Xi_{n,s} } e^{-\lambda (r_1 + \ldots + r_{\ell-1} + a_{j_1} + \ldots   + a_{j_s} )}& \leq \sum_{r_1=0}^{n-1} \sum_{r_2=0}^{n-1} \ldots \sum_{r_{\ell-1}=0}^{n-1} \sum_{a_{j_1} + \ldots + a_{j_s} \leq n+M}(e^{-\lambda \,r_1} \ldots e^{-\lambda \,r_{\ell-1}})\cdot e^{-\lambda (a_{j_1} + \ldots   + a_{j_s} )} \\
& \leq \left(\sum_{r=0}^{n-1} e^{-\lambda\, r}\right)^{\ell-1} \; \sum_{j_1, \ldots, j_s = 1}^m e^{-\lambda(a_{j_1} + \ldots + a_{j_s}) } \\
&\leq \frac{1}{(1- e^{-\lambda})^{\ell-1}} \;   \left(\sum_{i=1}^m  e^{-\lambda a_i}\right)^s   = C\, \left(\dd_{\bb}(\ee, \lambda)\right)^s  . 
\end{align*}
Combining this with \eqref{5.12} proves the required inequality. 
\end{proof}

Now we can conclude the proof of the theorem by using Claim \ref{Claim 4} and Claim~\ref{New:Claim}. For every $n\geq 2M$,
 \begin{align*}
 e^{- \lambda n} N(\aa^n_\Gamma) & = e^{M \lambda} e^{- \lambda (n+M)}  N(\aa^n_\Gamma) \leq e^{M \lambda} e^{- \lambda (n+M)} \; \sum_{s\geq 1}  |\Xi_{n,s}|\leq \\
& \leq  e^{M \lambda}   e^{-\lambda (n+M)} \; C\; e^{\lambda (n+M)} \sum_{s=1}^\infty (\dd_{\bb}(\ee, \lambda))^s =  C\, e^{M \lambda} \; \sum_{s=1}^\infty (\dd_{\bb}( \ee, \lambda))^s =: K 
\end{align*}
for some constant $K < \infty$ independent of $n$. This yields, for every $n\geq 2M$,
$$\frac{1}{n} \log N(\aa^n_\Gamma) \leq \lambda + \frac{1}{n} \log K.$$
Therefore 
$$\hh(T, \aa) = \limsup_{n\to \infty} \frac{1}{n} \log N(\aa^n_\Gamma) \leq \lambda < b(T) + \epsilon.$$
Combining this with \eqref{5.3} gives $\hh(T) < b(T) + 2\epsilon$. Letting $\epsilon \to 0$ implies $\hh(T) \leq b(T)$. This proves the theorem.
\end{proof}

\begin{remark}\label{Remark5.4}
Notice that the first part of the proof of Theorem~\ref{Theorem5.3} works without assuming that the semigroup is commutative and finitely generated, and moreover without any  specific assumptions about the regular system. Thus, $b(T) \leq \th(T)$ holds for any continuous action of a semigroup $G$ on a metric space and with respect to any regular system in $G$.
\end{remark}

\subsection{An alternative definition following Pesin}

Let $G$ be a semigroup, $\Gamma=(N_n)_{n\in\Z_+}$ a regular system in $G$, and $T\colon G\times X\to X$ a continuous action of $G$ on a compact metric space $X$.
In what follows we will give an equivalent definition of Bowen's entropy $b(T,Y)$ following some ideas of Pesin (see \cite[Section 4.11]{P}). 

Let $Y$ be a non-empty subset of $X$ and let $\lambda \geq 0$. Given $\epsilon > 0$ and an integer $N \geq 1$ set 
\begin{equation}\label{om}
\om_{N, \epsilon, \lambda}(Y) = \inf\left\{\sum_{i\in I} e^{-\lambda n_i}\colon |I|\leq\omega, \ Y\subseteq \bigcup_{i\in I} D_{n_i}^\Gamma(x_i, \epsilon),\ x_i \in Y,\ n_i \geq N\ \forall i\in I\right\}.
\end{equation}
Then define
$$\om_{\epsilon,\lambda}(Y) = \lim_{N\to \infty} \om_{N, \epsilon, \lambda}(Y) \quad\text{and} \quad \om_{\lambda}(Y) = \lim_{\epsilon\to 0} \om_{\epsilon, \lambda}(Y) .$$
The limits exist due to the monotonicity of $\om_{N, \epsilon,\lambda}(Y)$ with respect to both $N$ and $\epsilon$.
Just as in the case of $\m_{\aa,\lambda}(Y)$, it is easy to see that there exists a critical point $s \in \R$ such that
$\om_{\lambda}(Y) = 0$ for $\lambda  > s$ and $\om_{\lambda}(Y) = \infty$ for $\lambda < s$. Set $c (T,Y,\Gamma) = s$. Thus, we define a new entropy-like quantity
$$c (T,Y,\Gamma) = \inf \{ \lambda \geq 0\colon \om_{\lambda}(Y) = 0 \} .$$
For $Y=X$ set $c(T,\Gamma) = c(T,X,\Gamma)$.
When the system $\Gamma$ is clear from the context and no confusion is possible, we shall omit $\Gamma$.

\begin{remark}
In \cite{B-S2}, following  the line of \cite[Sections 10 and 11]{P}, we defined the upper capacity for Pesin-Carath\'eodory structures, and we described the receptive topological entropy $\th(T,Y,\Gamma)$ as a limit of upper capacities for suitable Pesin-Carath\'eodory structures. The entropy $c(T,Y,\Gamma)$ can be analogously obtained as a limit of upper capacities.
\end{remark}


\begin{theorem}\label{Theorem5.5} 
Let $G$ be a semigroup, $\Gamma=(N_n)_{n\in\Z_+}$ a regular system in $G$, and let $T\colon G \times X \to X$ be a continuous action of $G$ on a compact metric space $X$. If $Y$ is a non-empty subset of $X$, then $b(T, Y,\Gamma) = c(T,Y,\Gamma)$. 
\end{theorem}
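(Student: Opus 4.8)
The plan is to prove the two inequalities $b(T,Y,\Gamma)\le c(T,Y,\Gamma)$ and $c(T,Y,\Gamma)\le b(T,Y,\Gamma)$ separately. Since both quantities are Carath\'eodory-type critical exponents, it suffices to compare, for a fixed $\lambda\ge 0$, the underlying set functions $\m_{\aa,\lambda}$ and $\om_\lambda$. The whole argument rests on two elementary geometric observations linking the dynamic balls $D^\Gamma_n(x,\ep)$ with the quantity $n_\aa(E)$. First, if $\aa$ is a finite open cover with Lebesgue number $\delta>0$ and $2\ep<\delta$, then $\diam(g\, D^\Gamma_n(x,\ep))\le 2\ep<\delta$ for every $g\in N_n$, so each $g\, D^\Gamma_n(x,\ep)$ lies in some member of $\aa$; hence $n_\aa(D^\Gamma_n(x,\ep))\ge n$. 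Second, if every member of $\aa$ has diameter $\le\eta$ (i.e. $\aa$ has mesh $\le\eta$) and $x\in E$, then for every $g\in N_{n_\aa(E)}$ the set $gE$ lies in a member of $\aa$ and so $\diam(gE)\le\eta$, whence $d(gx,gy)\le\eta$ for all $y\in E$; this means $E\subseteq D^\Gamma_{n_\aa(E)}(x,\eta)$. Throughout I use that a compact metric $X$ admits, for every $\eta>0$, a finite open cover of mesh $<\eta$, and that every finite open cover has a Lebesgue number.

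For $b\le c$, fix a finite $\aa\in\cov(X)$ with Lebesgue number $\delta$, fix $\lambda\ge 0$ and $\ep<\delta/2$. Given any cover of $Y$ by dynamic balls $\{D^\Gamma_{n_i}(x_i,\ep)\}_{i\in I}$ with $x_i\in Y$ and $n_i\ge N$, the first observation gives $n_\aa(D^\Gamma_{n_i}(x_i,\ep))\ge n_i\ge N$; hence this family $\ee$ lies in $\mathcal F_{\ep'}(Y)$ for every $\ep'>e^{-N}$ and, since $\lambda\ge0$, $\dd_\aa(\ee,\lambda)\le\sum_{i}e^{-\lambda n_i}$. Taking the infimum over all such dynamic-ball covers yields $R_{\lambda,Y}(\ep')\le\om_{N,\ep,\lambda}(Y)$ for $\ep'>e^{-N}$. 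Letting $N\to\infty$ (so that $\ep'$ may be taken to $0$) and using the monotonicity of $R_{\lambda,Y}$ in $\ep'$ and of $\om_{N,\ep,\lambda}$ in $N$, I obtain $\m_{\aa,\lambda}(Y)\le\om_{\ep,\lambda}(Y)\le\om_\lambda(Y)$. Consequently, whenever $\lambda>c(T,Y,\Gamma)$ one has $\om_\lambda(Y)=0$, so $\m_{\aa,\lambda}(Y)=0$ and $b_\aa(T,Y,\Gamma)\le\lambda$; taking the supremum over $\aa$ and then $\lambda\downarrow c(T,Y,\Gamma)$ gives $b(T,Y,\Gamma)\le c(T,Y,\Gamma)$.

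For $c\le b$, fix a finite $\aa\in\cov(X)$ of mesh $\le\eta$ and fix $\lambda\ge0$. Let $\ee\in\mathcal F_{\ep'}(Y)$ with $\ep'<e^{-N}$; then every $E\in\ee$ satisfies $e^{-n_\aa(E)}\le\diam_{\aa,T}(\ee)<\ep'<e^{-N}$, so $n_\aa(E)\ge N$. Discarding the members disjoint from $Y$ and choosing $x_E\in E\cap Y$, the second observation gives $E\subseteq D^\Gamma_{n_\aa(E)}(x_E,\eta)$, so these dynamic balls cover $Y$ with $n_\aa(E)\ge N$; hence $\om_{N,\eta,\lambda}(Y)\le\sum_{E\in\ee}e^{-\lambda n_\aa(E)}=\dd_\aa(\ee,\lambda)$. (Members with $n_\aa(E)=\infty$ contribute $0$ on the right and may be assigned an arbitrarily large $n_i$ on the left, so they are harmless.) Taking the infimum over $\ee\in\mathcal F_{\ep'}(Y)$ and then $N\to\infty$ gives $\om_{\eta,\lambda}(Y)\le\m_{\aa,\lambda}(Y)$. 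Now if $\lambda>b(T,Y,\Gamma)$, then $\m_{\aa,\lambda}(Y)=0$ for \emph{every} finite $\aa$, in particular for covers of arbitrarily small mesh; hence $\om_{\eta,\lambda}(Y)=0$ for a sequence $\eta\downarrow0$, and the monotonicity of $\om_{\ep,\lambda}$ in $\ep$ forces $\om_\lambda(Y)=0$. Therefore $c(T,Y,\Gamma)\le\lambda$ for every $\lambda>b(T,Y,\Gamma)$, i.e. $c(T,Y,\Gamma)\le b(T,Y,\Gamma)$, completing the proof.

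I expect the main obstacle to be the careful coordination of the four scales involved — the dynamic-ball radius $\ep$, the threshold $N$, the mesh/Lebesgue-number scale of $\aa$, and the cover-fineness $\ep'$ of $\mathcal F_{\ep'}(Y)$ — together with the correct exploitation of the monotonicities so that the nested limits defining $\m_{\aa,\lambda}$ and $\om_\lambda$ are interchanged in the right order. A secondary point requiring care, in the direction $c\le b$, is that one must pass from ``$\m_{\aa,\lambda}(Y)=0$ for all $\aa$'' to ``$\om_\lambda(Y)=0$'' through covers of shrinking mesh rather than a single cover, since the inequality $\om_{\eta,\lambda}(Y)\le\m_{\aa,\lambda}(Y)$ only controls $\om$ at the scale equal to the mesh of $\aa$. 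Notably, neither commutativity nor finite generation of $G$ is needed, as the correspondence is purely geometric.
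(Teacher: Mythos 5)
Your proof is correct and takes essentially the same route as the paper's: both directions rest on the same two geometric observations (a dynamic ball $D^\Gamma_n(x,\ep)$ has $n_{\aa}(D^\Gamma_n(x,\ep))\geq n$ once $\ep$ is below the Lebesgue scale of $\aa$, and any set $E$ meeting $Y$ is trapped in $D^\Gamma_{n_\aa(E)}(x_E,\eta)$ when $\aa$ has mesh $\leq\eta$), followed by a comparison of the set functions $\m_{\aa,\lambda}$ and $\om_{\lambda}$ at fixed $\lambda$ to transfer the vanishing from one critical exponent to the other. Your bookkeeping is in fact slightly tidier than the paper's (you insist on centers $x_E\in E\cap Y$, handle $n_\aa(E)=\infty$, and run the second direction uniformly for $\lambda>b(T,Y,\Gamma)$ over covers of shrinking mesh, where the paper uses an auxiliary $\delta$-argument), but these are refinements of the same proof, not a different one.
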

\begin{proof}
Let $\aa = \{ A_1, \ldots, A_k\}\in\cov(X)$ and let $\epsilon > 0$ be a Lebesgue number for $\aa$. 
Given an integer $N \geq 1$ and a real number $\lambda>0$, set
\begin{equation}\label{5.13}
\m_{N,\aa,\lambda}(Y) = \inf \, \left\{ \sum_{E\in\mathcal E}e^{- \lambda \, n_{\aa}(E)}\colon \ee\subseteq [\mathcal B(X)]^{\leq \omega},\ Y \subseteq \bigcup \ee,\ n_{\aa}(E) \geq N \; \forall \;  E\in\ee \right\} .
\end{equation}
The map $N \to \m_{N,\aa,\lambda}(Y)$ is monotone, and it can be seen as above that $\m_{N,\aa,\lambda}(Y)$ has similar properties to $\om_{N, \epsilon, \lambda}(Y).$

Notice that if $E = D_n^\Gamma(x, \epsilon)$ for some $x\in X$ and $n \geq 1$, then $n_{\aa}(E) \geq n$. Indeed, given $g \in N_n$, there exists $j\in\{1,\ldots,k\}$ with $B_{\epsilon}(gx) \subseteq A_j$. 
For any $y\in E$ we have $d(gx,gy) < \epsilon$, so $gy \in B_{\epsilon}(gx) \subseteq A_j$. Thus, $gE \subseteq A_j$. This proves that $n_{\aa}(E) \geq n$.

Hence, taking $N \geq 1$ sufficiently large so that $e^{-N} < \epsilon$, the set in the right-hand side of \eqref{5.13} is contained in the set in the right-hand side of \eqref{RlY}. Therefore $\m_{N,\aa,\lambda}(Y) \leq \om_{N, \epsilon, \lambda}(Y)$. 
Moreover, the set in the right-hand side of \eqref{om} is contained in the set in the right-hand side of \eqref{5.13}.
Taking limits $N \to \infty$, $\epsilon \to 0$, gives $\m_{\aa, \lambda}(Y) \leq \om_{\lambda}(Y)$. Thus, whenever $\om_{\lambda}(Y)= 0$ we have
$\m_{\aa, \lambda}(Y) = 0$ as well. Hence $c(T,Y) \geq b_{\aa}(T,Y)$ which implies $c(T,Y) \geq b(T,Y)$.

\smallskip
To prove the opposite inequality, take small constants $\delta > 0$ and $\epsilon > 0$, and let $\aa = \{ A_1, , \ldots, A_k\}\in\cov(X)$ with $\diam(A_i) < \epsilon$ for all $i\in\{1,\ldots, k\}$ and such that  $\lambda_0 = b_{\aa}(T,Y) > b(T,Y) - \delta$.  
Then $\m_{\aa,\lambda}(Y) = 0$ for all $\lambda > \lambda_0$. 
Let $\lambda > \lambda_0$ and take a large $N \geq 1$ so that $e^{-N} < \epsilon$ and $\m_{N,\aa,\lambda}(Y) < \delta$. 
Then there exists a cover $\ee = \{E_i\colon i\in I\}$ of $Y$ as in the right-hand side of \eqref{5.13} with
$$\sum_{i\in I} e^{-\lambda n_{\aa}(E_i)} < \delta .$$

For each $i\in I$, fix an arbitrary $x_i \in E_i$ and set $n_i = n_{\aa}(E_i)$. 
Notice that $E_i \subseteq D_{n_i}^\Gamma(x_i,\epsilon)$: indeed, for every $g \in N_{n_i}$ there exists $j\in\{1,\ldots,k\}$ with $gE_i \subseteq A_j$; since $\diam(A_j) < \epsilon$, we get $d(gx_i,gy) < \epsilon$ for all $y \in E_i$. 
This yields $Y \subseteq\bigcup_{i\in I} D_{n_i}^\Gamma(x_i,\epsilon)$, which in turn shows that $\om_{N, \epsilon, \lambda}(Y) < \delta$. Taking limits
$N\to \infty$, $\epsilon \to 0$, gives $\om_{\lambda}(Y) < \delta$. Letting $\delta \to 0$, it now follows that $\om_{\lambda}(Y) = 0$ for all $\lambda > b(T,Y)$. Hence $c(T,Y) \leq b(T,Y)$.
\end{proof}

\def\hloc{\underline{h}^{loc}}
\def\hLoc{\overline{h}^{loc}}
\def\B{\overline{B}}
\def\L{\underline{L}}
\def\S{\underline{S}}

The following is a direct consequence of Theorem~\ref{Theorem5.3} and Theorem~\ref{Theorem5.5}.

\begin{corollary} 
Let $G$ be a finitely generated commutative semigroup, $\Gamma=(N_n)_{n\in\Z_+}$ a standard regular system in $G$, and $T\colon G\times X\to X$ a continuous action of $G$ on a compact metric space $X$. Then $\th(T,\Gamma) = b(T,\Gamma) = c(T,\Gamma)$.
\end{corollary}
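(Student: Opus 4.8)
The plan is simply to chain the two equalities supplied by Theorem~\ref{Theorem5.3} and Theorem~\ref{Theorem5.5}, after checking that the hypotheses of each are satisfied under the assumptions of the corollary. First I would observe that the standing hypotheses here --- namely that $G$ is a finitely generated commutative semigroup, that $\Gamma=(N_n)_{n\in\Z_+}$ is a \emph{standard} regular system in $G$, and that $T$ is a continuous action on a compact metric space $X$ --- coincide exactly with the hypotheses of Theorem~\ref{Theorem5.3}. Hence that theorem applies verbatim and yields the first equality $\th(T,\Gamma)=b(T,\Gamma)$.

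Next I would invoke Theorem~\ref{Theorem5.5}, whose hypotheses are strictly weaker (an arbitrary semigroup, an arbitrary regular system, and a continuous action on a compact metric space), so they hold in particular in our setting. Applying it with the non-empty subset $Y=X$ gives $b(T,X,\Gamma)=c(T,X,\Gamma)$; recalling the definitions $b(T,\Gamma):=b(T,X,\Gamma)$ and $c(T,\Gamma):=c(T,X,\Gamma)$, this is precisely the second equality $b(T,\Gamma)=c(T,\Gamma)$. Combining the two equalities produces $\th(T,\Gamma)=b(T,\Gamma)=c(T,\Gamma)$, as claimed.

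There is no genuine obstacle to overcome: all the substantive work has already been carried out in the proofs of Theorem~\ref{Theorem5.3} (the delicate direction being $\hh(T)\leq b(T)$, which rested on the combinatorial claims in its proof) and of Theorem~\ref{Theorem5.5}. The only matter requiring attention is bookkeeping --- confirming that the commutativity, finite generation, and standardness assumptions needed by Theorem~\ref{Theorem5.3} are indeed in force, and that specializing to $Y=X$ in Theorem~\ref{Theorem5.5} matches the global quantities $b(T,\Gamma)$ and $c(T,\Gamma)$, which are by definition their values on $X$. Accordingly I expect the proof to be a one- or two-line deduction.
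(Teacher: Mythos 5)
Your proposal is correct and is exactly the paper's argument: the corollary is stated there as a direct consequence of Theorem~\ref{Theorem5.3} (giving $\th(T,\Gamma)=b(T,\Gamma)$ under the commutative, finitely generated, standard hypotheses) and Theorem~\ref{Theorem5.5} applied with $Y=X$ (giving $b(T,\Gamma)=c(T,\Gamma)$). Your verification of the hypotheses and of the conventions $b(T,\Gamma)=b(T,X,\Gamma)$, $c(T,\Gamma)=c(T,X,\Gamma)$ is precisely the bookkeeping the paper leaves implicit.
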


\section{Comparisons between metric and topological entropy}\label{VPsec}

Let again $T\colon G\times X\to X$ be a continuous action of the semigroup $G$ on a compact metric space $X$, and let $\Gamma=(N_n)_{n\in\Z_+}$ be a regular system in $G$.

Denote by $M(X)$ the \emph{space of all Borel probability measures} on $X$ considered with the weak$^*$ topology (see e.g. \cite[Chapter 6]{Wa}). Let $M(X,T)$ be the closed \emph{subspace of $M(X)$ consisting of all $T$-invariant measures} $\mu \in M(X)$.

\begin{remark}\label{existenceofmeasure} 
In general, even when the semigroup $G$ is finitely generated, $M(X,T)$ could be empty
 (e.g., see \cite[Example 4.1.1]{Walc}). Nevertheless, it is known (see \cite[pages 97-98]{Walc}) that for two commuting homeomorphisms $f,g\colon X \to X$ there exists a Borel probability measure on $X$ which is both $f$-invariant and $g$-invariant. The argument can easily be extended to the case of two, and so finitely many, pairwise commuting {continuous selfmaps}. Therefore, $M(X,T)$ is non-empty in case $G$ is a finitely generated commutative semigroup. 
 
 The existence of a $T$-invariant Borel probability measure on $X$ is ensured also when $G$ is an amenable (not necessarily finitely generated) group (see \cite[Theorem 8.10]{EW}).
\end{remark}

The classical \emph{Variational Principle} due to Goodwyn \cite{G} and Goodman~\cite{Go1} states that, in case $G=\Z_+$ and $T=T(1,-)\colon X\to X$ is a continuous selfmap,
\begin{equation}\label{7.1}
h(T) = \sup \{ h_\mu(T) : \mu \in M(X,T) \}.
\end{equation}
In fact there is a more general Variational Principle concerning the topological pressure. Proofs of the Variational Principle have been given by various authors 
in a variety of specific situations -- see \cite{Ru1} (for $G = \Z^k$ and topological pressure, under some conditions),
\cite{El,Mi1,Oll,OllP,Ru2} and others. 

\begin{remark}\label{VPam}
In particular, it is well-known (see \cite{Oll,OllP}) that the Variational Principle \eqref{7.1} holds for continuous actions of 
amenable groups, where $h(T)$ and $h_{\mu}(T)$ are the classical topological and metric entropy defined by means of a F\o lner sequence $(N_n)_{n\in\Z_+}$ (see Section~\ref{classicalhm} and Section~\ref{classicaltop}). Moreover, the same result was proved by Misiurewicz \cite{Mi1} for actions of $\Z_+^n$, and for actions of countable cancellative semigroups in \cite{ST}.
\end{remark}

For the receptive topological entropy $\th(T)$ considered here a few remarks follow (the general case is not done yet). 
What we aim to prove is the following Receptive Variational Principle.

\begin{conjecture}\label{MainConj}
Let $G$ be a semigroup, $\Gamma=(N_n)_{n\in\Z_+}$ a regular system in $G$ and let $T\colon G\times X\to X$ be a continuous action on a compact metric space $X$ with  $M(X,T)\neq\emptyset$. 
Then:
\begin{equation}\label{7.2}
\th(T) = \sup \{\th_\mu(T)\colon \mu \in M(X,T) \}.
\end{equation}
\end{conjecture}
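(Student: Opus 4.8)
The plan is to prove the Receptive Variational Principle \eqref{7.2} by splitting it into the two inequalities $\sup_{\mu}\th_\mu(T)\le\th(T)$ (the Goodwyn direction) and $\th(T)\le\sup_\mu\th_\mu(T)$ (the Goodman--Misiurewicz direction), working throughout with the cover-theoretic form $\hh(T)=\th(T)$ from Theorem~\ref{Theorem5.1}, so that the partitions defining $\th_\mu$ and the open covers controlling $\th$ can be compared directly.

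For the inequality $\sup_\mu\th_\mu(T)\le\th(T)$, I would fix $\mu\in M(X,T)$ and a finite partition $\aa$, and first replace $\aa$ by a partition $\eta=\{B_0,B_1,\dots,B_k\}$ in which $B_1,\dots,B_k$ are pairwise $\delta$-separated compacta and $B_0$ is a leftover set of small measure, chosen so that $H_\mu(\aa\mid\eta)$ is negligible. Then, for $\ep\ll\delta$, in each coordinate $g\in N_n$ a ball $B_\ep(gx)$ meets at most one of $B_1,\dots,B_k$ and possibly $B_0$; hence each dynamic ball $D^\Gamma_n(x,\ep)$ meets at most $2^{|N_n|}$ atoms of $\eta^n_\Gamma$. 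Covering $X$ by a minimal $(n,\ep)$-spanning set and refining the crude factor $2^{|N_n|}$ by a Stirling-type count governed by the frequency of the $B_0$-coordinate, one reaches a bound of the shape $\tfrac1n H_\mu(\eta^n_\Gamma)\le\tfrac1n\log r_n(\ep,T)+\tfrac{|N_n|}{n}\,\phi(\mu(B_0))$, where $\phi(t)\to0$ as $t\to0$.

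For the reverse inequality, I would take a maximal $(n,\ep)$-separated set $E_n$, so that $\tfrac1n\log|E_n|=\tfrac1n\log s_n(\ep,T)\to\th(T)$ as $\ep\to0$, form the atomic measures $\sigma_n=\tfrac1{|E_n|}\sum_{x\in E_n}\delta_x$, average them over $N_n$, and extract a weak$^*$ cluster point $\mu$ (this is where $M(X,T)\ne\emptyset$ must enter). If $\aa$ is a partition into sets of diameter $<\ep$ with $\mu(\partial A)=0$, separation forces the points of $E_n$ into distinct atoms of $\aa^n_\Gamma$, so $\tfrac1n H_{\sigma_n}(\aa^n_\Gamma)=\tfrac1n\log|E_n|$; it then remains to transfer this lower bound to $\th_\mu(T,\aa)$ using invariance of $\mu$ together with the concavity of $\mu\mapsto H_\mu(\aa^n_\Gamma)$.

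I expect the main obstacle to be neither measurability nor regularity but the structural mismatch between the normalization $\tfrac1n$ and the growth of $|N_n|$. In the Goodwyn bound the correction $\tfrac{|N_n|}{n}\phi(\mu(B_0))$ is controllable only when $|N_n|$ grows linearly, and it blows up precisely in the regime $|N_n|\gg n$ (e.g. $G=\Z_+^k$, $k\ge2$) that motivates \cite{HS}. In the Goodman direction the same defect appears twice: invariance of the cluster point naturally calls for a $\tfrac1{|N_n|}$-average, whereas the entropy estimate is normalized by $\tfrac1n$, and the two reconcile only when $N_n$ is F\o lner with $|N_n|\sim n$; moreover $H_\mu(\aa^n_\Gamma)$ is not subadditive in $n$ (which is exactly why $\th_\mu$ is defined by a $\limsup$), so there is no $\inf_n$ representation to drive the block-averaging transfer. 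A more promising route may therefore bypass direct counting and route the two inequalities through the receptive local metric entropy $\hloc_\mu(T,\Gamma)$ and the identities $\th=b=c$ of Theorems~\ref{Theorem5.3} and~\ref{Theorem5.5}: the $\ge$ half would follow from the local variational inequality $\th(T)\ge\sup_\mu\hloc_\mu(T,\Gamma)$ combined with a Brin--Katok equality $\hloc_\mu=\th_\mu$, and the $\le$ half from a reverse local variational principle combined with the inequality $\hloc_\mu\le\th_\mu$. Establishing either local statement in full generality is, however, the genuine open problem underlying the conjecture.
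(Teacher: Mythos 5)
You have not proved the statement, but neither does the paper: what you were given is the paper's Conjecture~\ref{MainConj}, which is explicitly left open, with only partial results established around it. So your proposal should be judged as an analysis of the obstruction, and on that score it is accurate and matches the paper's own assessment. Your diagnosis --- that the Goodwyn-type bound produces a correction of order $\frac{|N_n|}{n}\,\phi(\mu(B_0))$ and the Goodman--Misiurewicz averaging produces error terms of order $|N_n|$, both of which are killed in the classical theory by the normalization $\frac{1}{|N_n|}$ but not by the receptive normalization $\frac{1}{n}$ --- is precisely the obstruction the paper records immediately after stating the conjecture. Two further cautions on your second step: in the receptive setting a weak$^*$ cluster point of the averaged atomic measures need not be $T$-invariant at all unless $(N_n)_{n\in\Z_+}$ is a F\o lner sequence, so the construction of a candidate $\mu\in M(X,T)$ already fails before any entropy estimate (the hypothesis $M(X,T)\neq\emptyset$ is an assumption of the conjecture, not something the averaging recovers); and, as you correctly note, the failure of subadditivity of $H_\mu(\aa^n_\Gamma)$ in $n$ blocks any $\inf_n$ representation of $\th_\mu(T,\aa)$.

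Your closing proposal --- to route both inequalities through the receptive local metric entropy --- is exactly the paper's own strategy, and the paper supplies precisely the two unconditional halves your plan needs as inputs. Theorem~\ref{Proposition6.1} proves $\underline{h}^{loc}_{\mu}(T)\leq \underline{S}_{\mu}(T)\leq c(T)=b(T)\leq\th(T)$ for \emph{every} Borel probability measure $\mu$, using the Pesin-type quantity $c$ together with Theorem~\ref{Theorem5.5} and Remark~\ref{Remark5.4}; and Theorem~\ref{Proposition6.2} proves the Brin--Katok-type inequality $\underline{h}^{loc}_{\mu}(T)\leq\th_\mu(T)$ for every $\mu\in M(X,T)$. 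The two missing links are exactly the ones you name and cannot supply: the Brin--Katok equality $\underline{h}^{loc}_{\mu}(T)=\th_\mu(T)$ for invariant $\mu$ (the paper's Question~\ref{Ques99}), which combined with Theorem~\ref{Proposition6.1} would give the inequality $\geq$ in \eqref{7.2}; and the local Variational Principle taken over invariant measures (the paper's Question~\ref{laaaaaaast:question}), which combined with Theorem~\ref{Proposition6.2} would give the inequality $\leq$. In short, your conclusion that the problem remains genuinely open is the paper's own conclusion, your proposed reduction agrees with the paper's partial results, and neither your text nor the paper constitutes a proof of \eqref{7.2}.
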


The main problem in all existing proofs of similar claims, including the one by Misiurewicz in \cite{Mi1}, is that in doing some estimates the ``error term''
is multiplied by $|N_n|$. Then using the classical definition of entropy, a division by $|N_n|$ follows which solves the problem.
In our case we only divide by $n$ and this cannot kill the extra factor.

\begin{proposition}\label{Remark1}
Let $G$ be a (necessarily countable) amenable cancellative semigroup with a regular system $(N_n)_{n\in\Z_+}$ which is a F\o lner sequence in $G$ with $\lim_{n\to \infty} \frac{n}{|N_n|}=0$.
Let $T\colon G\times X\to X$ be a continuous action of $G$ on a compact metric space $X$. If $h(T) > 0$, then \eqref{7.2} holds. 
\end{proposition}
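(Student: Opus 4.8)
The plan is to show that, under these hypotheses, \emph{both} sides of \eqref{7.2} are equal to $\infty$. The entire point is that the hypothesis $\lim_{n\to\infty}\frac{n}{|N_n|}=0$, i.e.\ $\frac{|N_n|}{n}\to\infty$, makes the gap between the classical normalization by $|N_n|$ and the receptive normalization by $n$ infinite, so any strictly positive classical entropy becomes an infinite receptive one.

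First I would establish $\th(T)=\infty$ (the left-hand side). Since $X$ is compact, $\th(T)=\th(T,X)$ and $h(T)=h(T,X)$. As $s(\epsilon,T,X)$ is non-increasing in $\epsilon$ and $h(T,X)=\lim_{\epsilon\searrow0}s(\epsilon,T,X)>0$, I may fix $\epsilon>0$ with $c:=\limsup_{n\to\infty}\frac{1}{|N_n|}\log s_n(\epsilon,T,X)>0$. Picking a subsequence $(n_k)$ with $\frac{1}{|N_{n_k}|}\log s_{n_k}(\epsilon,T,X)\to c$, one has $\frac{1}{|N_{n_k}|}\log s_{n_k}(\epsilon,T,X)>\frac{c}{2}$ for $k$ large, whence
$$\frac{1}{n_k}\log s_{n_k}(\epsilon,T,X)=\frac{|N_{n_k}|}{n_k}\cdot\frac{1}{|N_{n_k}|}\log s_{n_k}(\epsilon,T,X)>\frac{c}{2}\cdot\frac{|N_{n_k}|}{n_k}\longrightarrow\infty.$$
Hence $\ts(\epsilon,T,X)=\infty$, and since $\ts(\cdot,T,X)$ is non-increasing in $\epsilon$, passing to the limit $\epsilon\searrow0$ gives $\th(T)=\infty$.

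For the right-hand side I would invoke the classical Variational Principle. Because $(N_n)_{n\in\Z_+}$ is a F\o lner sequence in the countable cancellative semigroup $G$, Remark~\ref{VPam} (through \cite{ST}) yields $h(T)=\sup\{h_\mu(T)\colon\mu\in M(X,T)\}$; as $h(T)>0$, there is some $\mu\in M(X,T)$ with $h_\mu(T)>0$, and hence a partition $\aa\in\pa(X)$ with $h_\mu(T,\aa)=\lim_{n\to\infty}\frac{1}{|N_n|}H_\mu(\aa^n)>0$, where $\aa^n=\bigvee_{g\in N_n}g^{-1}\aa$. Running the same elementary estimate as above, with $\log s_{n}(\epsilon,T,X)$ replaced by $H_\mu(\aa^n)$, the factor $\frac{|N_n|}{n}\to\infty$ forces $\th_\mu(T,\aa)=\limsup_{n\to\infty}\frac{1}{n}H_\mu(\aa^n)=\infty$, so $\th_\mu(T)=\infty$ and the supremum on the right of \eqref{7.2} equals $\infty$ as well. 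Combining the two halves gives $\th(T)=\infty=\sup\{\th_\mu(T)\colon\mu\in M(X,T)\}$, which is \eqref{7.2}.

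The only non-formal ingredient is the classical Variational Principle for cancellative semigroup actions; the remaining argument is pure bookkeeping, and its single delicate point is that positivity of a $\limsup$ normalized by $|N_n|$ upgrades to $+\infty$ after renormalizing by $n$, which is exactly guaranteed by $\frac{n}{|N_n|}\to0$. This is the same mechanism already recorded in the Remark of Section~\ref{classicalhm}, except that here only the weaker hypothesis $\frac{n}{|N_n|}\to0$ is required in place of $|N_n|\ge cn^2$.
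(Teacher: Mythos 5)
Your proposal is correct and follows essentially the same route as the paper's own proof: both sides of \eqref{7.2} are shown to be $\infty$ via the classical Variational Principle of Remark~\ref{VPam} (through \cite{ST}) combined with the renormalization factor $\frac{|N_n|}{n}\to\infty$. The only difference is that you spell out the subsequence argument establishing $\th(T)=\infty$, which the paper merely asserts with ``In this case we must have $\th(T)=\infty$.''
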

\begin{proof}
In this case we must have $\th(T) = \infty$, so clearly $\th_\mu(T) \leq \th(T)$ for all $\mu\in M(X,T)$. Let us prove that $\sup\{ \th_\mu(T)\colon\mu\in M(X,T)\} = \infty$. 
Fix an arbitrary $\ep > 0$ such that $0 < \ep < h(T)$. By Remark~\ref{VPam} there exists $\mu \in M(X,T)$ such that
$\ep < h_\mu(T) \leq h(T)$. By the definition of $h_\mu(T)$, there exists a finite $\aa\in\pa(X)$ with 
$$\ep < h_\mu(T, \aa) = \lim_{n\to\infty} \frac{1}{|N_n|} H_\mu(\aa^n_\Gamma) .$$
Since $|N_n|/n\to \infty$ as $n \to \infty$, this implies 
$\th_\mu(T, \aa) = \limsup_{n\to \infty} \frac{1}{n} H_\mu(\aa^n_\Gamma) = \infty .$
Thus, $\th_\mu(T) = \infty$, so \eqref{7.2} holds.
\end{proof}

\begin{conjecture}\label{conjVP1/2}
Let $G$ be a semigroup, $\Gamma=(N_n)_{n\in\Z_+}$ a regular system in $G$ and let $T\colon G\times X\to X$ be a continuous action on a compact metric space $X$  with  $M(X,T)\neq\emptyset$. Then
\begin{equation}\label{geqVP}
\th(T) \geq \sup \{ \th_\mu(T)\colon \mu \in M(X,T) \}.
\end{equation}
\end{conjecture}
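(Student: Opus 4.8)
The plan is to establish the stronger pointwise bound $\th_\mu(T)\le\th(T)$ for every $\mu\in M(X,T)$, since taking the supremum over such $\mu$ then yields \eqref{geqVP}. By definition of $\th_\mu(T)$ it suffices to show $\th_\mu(T,\aa)\le\th(T)$ for each finite measurable partition $\aa=\{A_1,\ldots,A_k\}$, and the natural model is Goodwyn's half of the classical Variational Principle. First I would use the inner regularity of $\mu$ to pick compact sets $B_j\subseteq A_j$ with $\mu(A_j\setminus B_j)$ arbitrarily small and form the partition $\bb=\{B_0,B_1,\ldots,B_k\}$ with $B_0=X\setminus\bigcup_{j\ge1}B_j$ of small measure. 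Standard conditional-entropy estimates, together with the $T$-invariance of $\mu$, give $H_\mu(\aa^n_\Gamma)\le H_\mu(\bb^n_\Gamma)+|N_n|\,H_\mu(\aa\mid\bb)$ with $H_\mu(\aa\mid\bb)$ small. The key geometric step is to compare $H_\mu(\bb^n_\Gamma)$ with $\log s_n(\epsilon,T)$: choosing $\epsilon<\tfrac12\min_{i\ne j,\,i,j\ge1}d(B_i,B_j)$, for any $x$ and any $g\in N_n$ the set $g\,D^\Gamma_n(x,\epsilon)\subseteq B_\epsilon(gx)$ has diameter $<2\epsilon$, so it meets at most one of $B_1,\ldots,B_k$ (possibly together with $B_0$). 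Covering $X$ by the $s_n(\epsilon,T)$ dynamic balls of a maximal $(n,\epsilon)$-separated set, each ball meets at most $2^{|N_n|}$ cells of $\bb^n_\Gamma$, whence $H_\mu(\bb^n_\Gamma)\le\log s_n(\epsilon,T)+|N_n|\log2$.

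The main obstacle is exactly the one flagged by the authors after Conjecture~\ref{MainConj}: both error terms above are proportional to $|N_n|$, while the receptive normalization divides only by $n$. Hence in the interesting regime $|N_n|/n\to\infty$ (e.g. $G=\Z^k_+$ with $k\ge2$, where $|N_n|\asymp n^k$) the quantities $\frac{|N_n|}{n}H_\mu(\aa\mid\bb)$ and $\frac{|N_n|}{n}\log2$ do not vanish, and the argument collapses. A refinement via the pointwise ergodic theorem — restricting to those $x$ whose $N_n$-orbit meets $B_0$ with frequency $\approx\mu(B_0)$, so that the per-ball overcounting drops to $\exp(H(\mu(B_0))\,|N_n|)$ with $H$ the binary entropy — improves the constant but still leaves a factor exponential in $|N_n|$, which $\frac1n\log(\cdot)$ cannot absorb. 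So no direct sharpening of Goodwyn's counting works in general: the ambiguity created by the ``leftover'' cell $B_0$ is the genuine enemy.

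This suggests two lines of attack. First, when $X$ is zero-dimensional (e.g. a subshift) one may take $\bb$ itself to be a finite partition into disjoint clopen sets, so that $B_0=\emptyset$; such partitions are cofinal in the measure algebra, so $\th_\mu(T)=\sup_{\bb\ \mathrm{clopen}}\th_\mu(T,\bb)$. For clopen $\bb$ and $\epsilon<\tfrac12\min_{i\ne j}d(B_i,B_j)$, each dynamic ball lies in a single cell of $\bb^n_\Gamma$, so the number of nonempty cells is at most $s_n(\epsilon,T)$, giving $\th_\mu(T,\bb)\le\ts(\epsilon,T)\le\th(T)$ with no error term. Thus the conjecture holds unconditionally for actions on zero-dimensional compact spaces. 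Second, for the general case I would route through the local-entropy machinery already available: by Theorem~\ref{Proposition6.1} one has $\th(T)\ge\underline{S}_\mu(T,\Gamma)$, so it would suffice to prove the complementary inequality $\th_\mu(T)\le\underline{S}_\mu(T,\Gamma)$; since $\underline{h}^{loc}_\mu(T,\Gamma)\le\underline{S}_\mu(T,\Gamma)$ by definition, this is formally weaker than the reverse Brin--Katok inequality $\th_\mu(T)\le\underline{h}^{loc}_\mu(T,\Gamma)$ raised in Question~\ref{Ques99} and complementing Theorem~\ref{Proposition6.2}. I expect this reduction — converting the Variational inequality into a measure-theoretic, Shannon--McMillan--Breiman-type local statement that is independent of the covering combinatorics — to be the decisive move, with the receptive SMB theorem the hardest ingredient. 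Finally, the case $h(T)>0$ under the F\o lner hypotheses is already settled by Proposition~\ref{Remark1}, so attention can be restricted to the genuinely receptive regime $h(T)=0$.
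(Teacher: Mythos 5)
Your proposal does not prove the conjecture, and you are candid about this for the general case; note first that \eqref{geqVP} is stated in the paper as a \emph{conjecture} --- the paper itself contains no proof of it, only the partial result Proposition~\ref{Remark1} (which requires $G$ amenable cancellative, $\Gamma$ a F\o lner sequence with $n/|N_n|\to 0$, and $h(T)>0$) and the discussion of why Goodwyn/Misiurewicz-type arguments break down. Your diagnosis of the obstruction (error terms proportional to $|N_n|$ against a normalization by $n$) agrees exactly with the paper's remark following Conjecture~\ref{MainConj}, and your second line of attack --- combining Theorem~\ref{Proposition6.1} with the hoped-for inequality $\th_\mu(T)\leq \underline{S}_\mu(T,\Gamma)$ --- is precisely the route the paper itself proposes via Theorem~\ref{Proposition6.2} and Question~\ref{Ques99}. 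But that step is itself an open problem, so this is a reduction of one open statement to another, not a proof.

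The genuine gap is in the one unconditional result you do claim, namely that \eqref{geqVP} holds for all zero-dimensional compact $X$. The clopen computation itself is correct: if $\bb=\{B_1,\ldots,B_k\}$ is a clopen partition and $\epsilon<\tfrac12\min_{i\neq j}d(B_i,B_j)$, then each dynamic ball $D^\Gamma_n(x,\epsilon)$ lies in a single cell of $\bb^n_\Gamma$, a maximal $(n,\epsilon)$-separated set is $(n,\epsilon)$-spanning, and hence $H_\mu(\bb^n_\Gamma)\leq \log s_n(\epsilon,T)$, giving $\th_\mu(T,\bb)\leq\ts(\epsilon,T)\leq\th(T)$. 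What is unjustified is the cofinality assertion $\th_\mu(T)=\sup\{\th_\mu(T,\bb)\colon \bb\ \text{clopen}\}$. Approximating an arbitrary finite Borel partition $\aa$ by clopen partitions $\bb$ only makes $H_\mu(\aa\mid\bb)$ small, not zero, and the only available comparison is the Rokhlin--Kolmogorov--Sinai estimate
\begin{equation*}
H_\mu(\aa^n_\Gamma)\;\leq\; H_\mu(\bb^n_\Gamma)+\sum_{g\in N_n}H_\mu\left(g^{-1}\aa\mid g^{-1}\bb\right)\;=\;H_\mu(\bb^n_\Gamma)+|N_n|\,H_\mu(\aa\mid\bb),
\end{equation*}
whose error term is again proportional to $|N_n|$: after dividing by $n$ it yields $\th_\mu(T,\aa)\leq\th_\mu(T,\bb)+\limsup_{n}\frac{|N_n|}{n}H_\mu(\aa\mid\bb)$, which is vacuous whenever $|N_n|/n\to\infty$ --- that is, in exactly the ``genuinely receptive'' regime (e.g.\ $G=\Z_+^k$, $k\geq 2$, with the standard system) that the conjecture is about. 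In the classical amenable setting smallness of $H_\mu(\aa\mid\bb)$ suffices because one divides by $|N_n|$; here it does not, and this is the same enemy you identified in the Goodwyn counting, now reappearing in the reduction step. So what you have actually established for zero-dimensional $X$ is only $\sup_{\bb\ \mathrm{clopen}}\th_\mu(T,\bb)\leq\th(T)$, an a priori weaker statement than \eqref{geqVP}. (What can be salvaged: your argument does prove the conjecture for zero-dimensional $X$ under the extra hypothesis $\limsup_n |N_n|/n<\infty$, since then the error term is bounded by a constant multiple of $H_\mu(\aa\mid\bb)$ and can be made arbitrarily small; but this excludes the cases of principal interest.)
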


\begin{question} 
For which compact metric spaces does \eqref{geqVP} hold? More specifically, does it hold for the Cantor cube $\{0,1\}^n$? What about an arbitrary totally disconnected compact metric space $X$?  What about the tori $\T^n$, or, more generally, metric continua? 
\end{question}

We conclude with a special case, namely when $X$ is a compact topological group with Haar measure $\mu$ and  $G$ acts on $X$ by continuous surjective endomorphisms. By a well known Halmos paradigm, this ensures that  the actions $T$ is measure preserving. In case $G=\Z_+$, $\th(T) = h(T)=h_\mu(T)=\th_\mu(T)$, as established in \cite{S}. The equality $h(T)=h_\mu(T)$ is true also for $\Z^d$ actions on compact groups for the classical entropy (see \cite{LSW}). 
Clearly, this can be seen as a (strongly) positive answer to Conjecture \ref{MainConj}. In this vein one can ask: 

\begin{question} 
Under which conditions on the semigroup $G$ the equality $\th(T)=\th_\mu(T)$ is true in the above situation?
\end{question}

The notion of algebraic entropy $h_{alg}(f)$ for an endomorphism $f\colon A \to A$ of an abelian group $A$ was briefly introduced in \cite{AKMc} and further studied in \cite{DGSZ,DG}. A nice connection, named {\em Bridge Theorem}, was found between this entropy and the topological entropy $h(\widehat f)$ 
of the dual (continuous) endomorphism $\widehat f$ of the (compact) Pontryagin dual $\widehat A$, namely $h_{alg}(f)=  h(\widehat f)$ (see \cite{DG1}). This result extends earlier previous ones of M. Weiss \cite{W} (for $A$ torsion) and Peters \cite{Pet} (for $A$ countable and $f$ an automorphism).

Extending the definition of $h_{alg}$ to the case of an action $T$ of an amenable cancellative semigroup $G$ on an abelian group $A$ via endomorphisms, 
the algebraic entropy $h_{alg}(T)$ was defined in \cite{DFG}. Moreover,  the Bridge Theorem was extended in this setting as well, in case the group $A$ is torsion. Namely, if $T$ is an action of an amenable cancellative semigroup $G$ on a torsion abelian group $A$ via endomorphisms, then 
the topological entropy $h(\widehat T)$ of the dual action $\widehat T$ of $G$ on the (compact and totally disconnected) Pontryagin dual $\widehat A$ coincides with $h_{alg}(T)$. 

Recently, the authors \cite{B-S1} defined the  \emph{receptive algebraic entropy} $\th_{alg}(T)$ of an action $T$ of a semigroup $G$, provided with a regular system $\Gamma = (N_n)_{n\in\Z_+}$, on an abelian group $A$ by endomorphisms. In case the abelian group $A$ is torsion, a Bridge Theorem between this entropy $\th_{alg}(T)$ and the receptive topological entropy $\th(\widehat T)$ of the dual action $\widehat T$ was established in \cite{GB}. 

There is an obvious analogy between the Variational Principle, connecting (receptive) topological entropy and (receptive) metric entropy of the same action $T$, and the Bridge Theorem, connecting the  (receptive) topological entropy of an action $T$ and the (receptive) algebraic entropy of the dual action $\widehat T$ (for example see \cite{DG3}). This motivated our choice to formulate Conjecture \ref{MainConj} in view of the positive evidence, in this sense, provided by the Bridge Theorem available in all these cases, as well as the results of Section \ref{local} that are in the spirit of the Variational Principle.


\section{Receptive local metric entropy}\label{local} 

For a continuous selfmap $f\colon X\to X$  of a metric space $X$ and a Borel probability measure  $\mu$ on $X$, Bowen \cite{Bow2} introduced a topological entropy $h^B$ of subsets inspired by Hausdorff dimension. Then Brin and Katok \cite{BrK} introduced a notion $\hloc_\mu$ and $\hLoc_\mu$ of lower and upper local metric entropy.
By the so-called Brin-Katok Formula $h_\mu(f)=\hloc_\mu(f)=\hLoc_\mu(f)$  they gave a description of the metric entropy in terms of the local metric entropy for every $f$-invariant probability measure $\mu$ on $X$. 

Later on, Ma and Wen \cite{MaW} showed that the lower local metric entropy $\hloc_\mu$  with respect to any Borel probability measure $\mu$ on $X$ is always smaller than Bowen topological entropy. This result was taken to a local Variational Principle by Feng and Huang \cite{FH}, who proved in particular that, in case $X$ is a compact metric space, 
$$h^B(f)=h(f)=\sup\{\hloc_\mu(f)\colon \mu\ \text{Borel probability measure on $X$}\}.$$

More recently, Bowen's topological entropy inspired by Hausdorff dimension was extended to continuous actions of amenable groups on compact metric spaces by Zheng and Chen \cite{ZC}, and under suitable hypotheses they proved that this Bowen topological entropy coincides with the classical topological entropy.
Moreover, they introduced upper and lower local metric entropy, extended the Brin-Katok Formula and prove a ``local Variational Principle'' between Bowen topological entropy and the lower local metric entropy in the case of actions of countable amenable groups on compact metric spaces.

The definition of upper and lower receptive local metric entropy that we use here is the analogue of the one in \cite{ZC} adapted to our definition of receptive topological entropy. The pointwise concepts  $\hloc_{\mu}(x)$ and $\hloc_{\mu}(x,{\epsilon})$ below are as in \cite{Bis3}, where they were introduced in the particular case of actions of finitely generated groups on compact metric spaces with respect to the finite generating set of the group.

\begin{definition}\label{LocalMetricE}
Let $T\colon G\times X\to X$ be a continuous action of the semigroup $G$ on a compact metric space $X$, and let $\Gamma=(N_n)_{n\in\Z_+}$ be a regular system in $G$. Let $\mu$ be a Borel probability measure on $X$. The \emph{lower} and the \emph{upper receptive local metric entropy} of $T$ at $x\in X$ are defined by
$$\hloc_{\mu} (x) = \lim_{\epsilon \to 0} \hloc_{\mu} (x,\epsilon) \quad\text{and}\quad \hLoc_{\mu} (x) = \lim_{\epsilon \to 0} \hloc_{\mu} (x,\epsilon),$$
where, for $\epsilon>0$,
$$\hloc_{\mu} (x,\epsilon) = \liminf_{n\to \infty} - \frac{1}{n} \log \mu(D_n^\Gamma(x, \epsilon))\quad\text{and}\quad \hLoc_{\mu} (x,\epsilon) = \limsup_{n\to \infty} - \frac{1}{n} \log \mu(D^\Gamma_n(x, \epsilon)).$$

The \emph{lower} and the \emph{upper receptive local metric entropy} of $T$ with respect to $\mu$ are defined by 
 $$\hloc_{\mu}(T,\Gamma) = \int_X \hloc_{\mu}(x)d\mu\quad \text{and}\quad \hLoc_{\mu}(T) = \int_X \hLoc_{\mu}(x) \, d\mu.$$
\end{definition}

Clearly, $\hloc_{\mu} (x,\epsilon)$ and $\hLoc_{\mu}(x)$ measure the decay of the dynamic ball $D^\Gamma_n(x,\ep)$ with respect to $\mu$.
We shall omit to write $\Gamma$ in $\hloc_{\mu}(T,\Gamma)$ when it is clear from the context.

\medskip
The argument in the proof of the  following folklore fact was kindly proposed to us by Hans Weber in the case of the trivial action. It is used in the above definition to apply the integral to the function $\hloc_{\mu}\colon X\to \R_{\geq0}$.

\begin{lemma}\label{measurableh}
Let $T\colon G\times X\to X$ be a continuous action of the semigroup $G$ on a compact metric space $X$, let $\Gamma=(N_n)_{n\in\Z_+}$ be a regular system in $G$ and $\mu$ a Borel probability measure on $X$. The functions $\hloc_{\mu}(-,\epsilon)\colon X\to \R_{\geq0}$ for $\ep>0$ and $\hloc_{\mu}\colon X\to \R_{\geq0}$ are measurable.
\end{lemma}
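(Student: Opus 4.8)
The plan is to reduce the whole statement to the measurability—in fact, upper semicontinuity—of the single-scale section measures $x\mapsto\mu(D_n^\Gamma(x,\epsilon))$, and then to reconstruct the two functions in the statement by countable limit operations that preserve measurability. Throughout I regard the functions as taking values in $[0,\infty]$, since $\mu(D_n^\Gamma(x,\epsilon))$ may vanish and then $-\frac1n\log\mu(D_n^\Gamma(x,\epsilon))=+\infty$.

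First I would fix an integer $n\geq1$ and $\epsilon>0$ and consider the set
$$\Delta_{n,\epsilon}=\{(x,y)\in X\times X\colon d(gx,gy)\leq\epsilon\ \text{for all}\ g\in N_n\}.$$
Since each $g\colon X\to X$ is continuous, $d$ is continuous, and $N_n$ is finite, $\Delta_{n,\epsilon}$ is a finite intersection of closed sets, hence closed in $X\times X$; and $D_n^\Gamma(x,\epsilon)$ is precisely its $x$-section $(\Delta_{n,\epsilon})_x=\{y\colon (x,y)\in\Delta_{n,\epsilon}\}$. Writing $\mu(D_n^\Gamma(x,\epsilon))=\int_X\mathbf 1_{\Delta_{n,\epsilon}}(x,y)\,d\mu(y)$ and using that the indicator of a closed set is upper semicontinuous, together with the reverse Fatou lemma (the integrands are dominated by the $\mu$-integrable constant $1$), I would obtain, for every sequence $x_k\to x$,
$$\limsup_{k\to\infty}\mu(D_n^\Gamma(x_k,\epsilon))\leq\int_X\limsup_{k\to\infty}\mathbf 1_{\Delta_{n,\epsilon}}(x_k,y)\,d\mu(y)\leq\int_X\mathbf 1_{\Delta_{n,\epsilon}}(x,y)\,d\mu(y)=\mu(D_n^\Gamma(x,\epsilon)).$$
Thus $x\mapsto\mu(D_n^\Gamma(x,\epsilon))$ is upper semicontinuous, hence Borel measurable.

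With this in hand the rest is assembly. Since $t\mapsto-\frac1n\log t$ is continuous and decreasing on $(0,1]$ (with value $+\infty$ at $t=0$), the composite $x\mapsto-\frac1n\log\mu(D_n^\Gamma(x,\epsilon))$ is a Borel-measurable map $X\to[0,\infty]$; taking $\liminf_{n\to\infty}$ of these countably many measurable functions shows that $\hloc_{\mu}(-,\epsilon)$ is measurable for every $\epsilon>0$. Finally, because $D_n^\Gamma(x,\epsilon)$ shrinks as $\epsilon$ decreases, the quantity $\hloc_{\mu}(x,\epsilon)$ is non-increasing in $\epsilon$, so $\hloc_{\mu}(x)=\lim_{\epsilon\to0}\hloc_{\mu}(x,\epsilon)=\lim_{k\to\infty}\hloc_{\mu}(x,1/k)$ is a pointwise (monotone) limit of measurable functions and is therefore measurable.

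The routine steps are the closedness of $\Delta_{n,\epsilon}$ and the stability of measurability under $\liminf$ and countable limits. The one delicate point—the heart of the argument—is the upper semicontinuity of the section measure $x\mapsto\mu(D_n^\Gamma(x,\epsilon))$: here one must genuinely invoke the upper semicontinuity of $\mathbf 1_{\Delta_{n,\epsilon}}$ and the reverse Fatou lemma, rather than hope for continuity in $x$, since the $\mu$-measure of a moving closed section need not depend continuously on $x$.
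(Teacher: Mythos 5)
Your proof is correct, but it follows a genuinely different route from the paper's, and the difference is substantive. The paper makes the same reduction --- measurability of $f_n\colon x\mapsto\mu(D_n^\Gamma(x,\epsilon))$, followed by the same assembly through $-\frac{1}{n}\log$, $\liminf_{n\to\infty}$, and the monotone limit in $\epsilon$ --- but at the key step it asserts that $f_n$ is \emph{lower} semicontinuous, arguing from the identity $\bigcup_{k>0}D_n^\Gamma(x_0,\epsilon-\frac{1}{k})=D_n^\Gamma(x_0,\epsilon)$ together with uniform continuity of the finitely many $g\in N_n$. Because the paper's dynamic balls are defined by the non-strict inequality $d(gx,gy)\leq\epsilon$, that union is in fact the \emph{open} dynamic ball $\{y\in X\colon d(gx_0,gy)<\epsilon\ \forall g\in N_n\}$, so the inner-approximation step $\mu(D_n^\Gamma(x_0,\epsilon-\frac{1}{k}))\to f_n(x_0)$ can fail (it fails precisely when the ``sphere'' $\{y\in X\colon\max_{g\in N_n}d(gx_0,gy)=\epsilon\}$ has positive measure), and lower semicontinuity is false in general: for the trivial action on $X=[-1,1]$ with $\mu=\delta_0$ and $\epsilon=1/2$ one gets $f_n(x)=\mu(\overline{B}(x,1/2))$, which equals $1$ for $|x|\leq 1/2$ and $0$ for $|x|>1/2$. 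Your \emph{upper} semicontinuity --- obtained from closedness of $\Delta_{n,\epsilon}$ in $X\times X$ and the reverse Fatou lemma --- is the property that actually holds for these closed balls; equivalently, one could run the paper's elementary metric argument ``from above'', using the valid identity $\bigcap_{k>0}D_n^\Gamma(x_0,\epsilon+\frac{1}{k})=D_n^\Gamma(x_0,\epsilon)$. Since either kind of semicontinuity yields Borel measurability, your argument proves the lemma while also repairing the one delicate point at which the paper's own proof has a gap (the paper's argument would be correct verbatim had the dynamic balls been defined with strict inequality). Your explicit treatment of the value $+\infty$, which occurs when $\mu(D_n^\Gamma(x,\epsilon))=0$, is likewise more careful than the statement that the functions take values in $\R_{\geq0}$.
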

\begin{proof} 
We prove that, fixed $n\in\Z_+$ and $\ep>0$, the function $f_n\colon X\to [0,1]$, $x\mapsto \mu(D_n^\Gamma(x, \epsilon))$ is measurable. So also the function $x\mapsto \frac{1}{n}\log \mu(D_n^\Gamma(x, \epsilon))$ is measurable.
Then $\hloc_{\mu} (-,\epsilon)$ and $\hloc_{\mu}$ result to be measurable as limits of measurable functions.

Let $f=f_n$. First we verify that $f$ is lower semicontinuous. To this end, fix $x_0\in X$ and $a\in[0,1]$ with $f(x_0)>a$. Since 
$(D_n^\Gamma(x_0,\ep-\frac{1}{k}))_{k>0}$ is an increasing chain and $\bigcup_{k>0}D_n^\Gamma(x_0,\ep-\frac{1}{k})=D_n^\Gamma(x_0,\ep)$, the sequence of real numbers $(\mu(D_n^\Gamma(x_0,\ep-\frac{1}{k})))_{k>0}$ converges to $f(x_0)=\mu(D_n^\Gamma(x_0,\ep))$. Thus, there exists $0<\ep' <\ep$ such that 
$$\mu(D_n^\Gamma(x_0,\ep'))>a.$$
Let $\bar\ep=\ep-\ep'$. For every $g\in N_n$, $g\colon X\to X$ is (uniformly) continuous, so there exists $\delta_g>0$ such that $x\in B(x_0,\delta_g)$ implies  $gx\in B(gx_0,\bar\ep)$. Let $\delta=\min_{g\in N_n} \delta_g$ and $U=B(x_0,\delta)$. If $x\in U$, then 
\begin{equation}\label{ep'}
D_n^\Gamma(x_0,\ep')\subseteq D_n^\Gamma(x,\ep).
\end{equation}
Indeed, $y\in D_n^\Gamma(x_0,\ep')$, then for every $g\in N_n$, $d(gy,gx)\leq d(gy,gx_0)+d(gx_0,gx)\leq \ep'+\bar\ep=\ep.$
By \eqref{ep'}, if $x\in U$, then $f(x)=\mu(D_n^\Gamma(x,\ep))\geq \mu(D_n^\Gamma(x_0,\ep'))>a$. This shows that $f$ is lower semicontinuous.

The fact that $f$ is lower semicontinuous implies that $f^{-1}((\gamma,1])$ is open in $X$ for every $\gamma\in(0,1)$, and this last property entails that $f$ is measurable.
\end{proof}

For reader's convenience, we give an auxiliary result based on the classical theorems of Lusin and Egorov 
adapted to our more specific situation.  

\begin{lemma}\label{L6.1}\label{L6.2}
Let $X$ be a compact metric space and $\mu$ a Borel probability measure on $X$. 
\begin{itemize}
\item[(a)] A real valued function $f$ on $X$ is measurable if and only if for every $\ep>0$ there exists a compact subset $K$ of $X$ with $\mu(X\setminus K)<\ep$ and such that $f\restriction_K$ is continuous.
\item[(b)]  If a sequence of measurable real valued functions $f_n$ on $X$ converges to a real valued measurable function $f$ on $X$ at each $x\in X$, 
then for each $\epsilon>0$ there exists a compact subset $K$ of $X$ such that $\mu(X\setminus K)<\ep$ and $f_n$ converges to $f$ uniformly on $K$.
\end{itemize}
\end{lemma}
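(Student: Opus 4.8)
The plan is to treat both parts as short consequences of the classical theorems of Lusin and Egorov, the only additional ingredient being the inner regularity of $\mu$. This regularity is available because $\mu$ is a Borel probability measure on the compact metric space $X$, and is therefore automatically inner and outer regular, as recalled in the introduction (citing \cite[Theorem 6.1]{Wa}). In each part the classical theorem produces a \emph{measurable} set of almost full measure on which the desired property (continuity of $f$, respectively uniform convergence of the $f_n$) holds; inner regularity then lets me shrink this set to a \emph{compact} subset while keeping its measure close to $1$, and the property in question is inherited by every subset, so nothing is lost in the passage.

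For the nontrivial implication of (a), I would start from Lusin's theorem: given $f$ measurable and $\ep>0$, there is a measurable set $A\subseteq X$ with $\mu(X\setminus A)<\ep/2$ such that $f\restriction_A$ is continuous. By inner regularity I then pick a compact $K\subseteq A$ with $\mu(A\setminus K)<\ep/2$, so that $\mu(X\setminus K)<\ep$ and $f\restriction_K$, being a restriction of the continuous map $f\restriction_A$, is continuous. The converse is the routine direction: choosing for each $n$ a compact $K_n$ with $\mu(X\setminus K_n)<1/n$ and $f\restriction_{K_n}$ continuous, the set $K_\infty=\bigcup_n K_n$ has full measure, and for every open $U\subseteq\R$ the set $f^{-1}(U)\cap K_n$ is relatively open in the (closed, hence Borel) set $K_n$ and so is Borel; thus $f^{-1}(U)\cap K_\infty$ is Borel, and $f^{-1}(U)$ differs from it by a subset of the $\mu$-null set $X\setminus K_\infty$. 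Hence $f$ is measurable with respect to the $\mu$-completion of the Borel $\sigma$-algebra, which is all that is needed for the integration in Definition~\ref{LocalMetricE}.

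For (b) the argument is entirely parallel. Egorov's theorem applies since $\mu(X)=1<\infty$, and it yields, for a given $\ep>0$, a measurable set $A$ with $\mu(X\setminus A)<\ep/2$ on which $f_n\to f$ uniformly. Inner regularity then furnishes a compact $K\subseteq A$ with $\mu(A\setminus K)<\ep/2$, and uniform convergence on $A$ restricts at once to uniform convergence on the smaller set $K$, while $\mu(X\setminus K)<\ep$ by additivity of the two error terms.

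I do not expect a serious obstacle: the whole content is the upgrade from ``measurable'' to ``compact'' via inner regularity, together with the $\ep/2+\ep/2$ bookkeeping. The single point demanding a word of care is the converse direction of (a), where one must be explicit about which $\sigma$-algebra the word ``measurable'' refers to. The pieces $f\restriction_{K_n}$ are continuous, hence Borel on each $K_n$, but the behaviour of $f$ on the null set $X\setminus K_\infty$ is uncontrolled, so one obtains measurability only up to completion; this is harmless for the use made of the lemma, namely integrating $\hloc_{\mu}$ against $\mu$.
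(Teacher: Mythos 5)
Your proof is correct and takes exactly the route the paper intends: the paper states this lemma without proof, describing it only as ``based on the classical theorems of Lusin and Egorov adapted to our more specific situation,'' and your argument (the classical theorems plus inner regularity of a Borel probability measure on a compact metric space, with the $\ep/2+\ep/2$ bookkeeping) supplies precisely those details. Your caveat that the converse implication in (a) only yields measurability with respect to the $\mu$-completion of the Borel $\sigma$-algebra is a well-spotted refinement of the paper's ``if and only if'' phrasing, and it is indeed harmless, since only the forward directions of (a) and (b) are used in the proof of Theorem~\ref{Proposition6.2}.
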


We see that the lower receptive local metric entropy is always smaller than the receptive metric entropy. This can be seen as one half of the receptive version of the Brin-Katok Formula.

\begin{theorem}\label{Proposition6.2}
Let $G$ be a semigroup, $\Gamma=(N_n)_{n\in\Z_+}$ a regular system in $G$ and $T\colon G\times X\to X$ a continuous action of $G$ on a compact metric space $X$.  Then, for a $T$-invariant Borel probability measure $\mu$ on $X$, 
$\hloc_{\mu}(T) \leq \th_\mu(T)$.
\end{theorem}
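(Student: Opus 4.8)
The plan is to follow the classical strategy of Ma--Wen and of Zheng--Chen, adapted to the receptive setting: bound each dynamic ball from below by an atom of a sufficiently fine partition, pass to the information function, and integrate using Fatou's lemma. (The hypothesis that $\mu$ is $T$-invariant is what places us in the measure-preserving framework in which $\th_\mu(T)$ is defined; the inequality itself will flow from the geometric comparison below.) First I would fix $\epsilon>0$ and, using the compactness of $X$, choose a finite Borel partition $\aa\in\pa(X)$ all of whose members have diameter at most $\epsilon$ (cover $X$ by finitely many balls of radius $\epsilon/2$ and disjointify them). For $x\in X$ and $n\geq1$ let $\aa^n_\Gamma(x)$ denote the unique atom of $\aa^n_\Gamma$ containing $x$. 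The crucial geometric observation is the inclusion
$$\aa^n_\Gamma(x)\subseteq D_n^\Gamma(x,\epsilon).$$
Indeed, writing $\aa^n_\Gamma(x)=\bigcap_{g\in N_n}g^{-1}A_g$ with $A_g\in\aa$ the member containing $gx$, any $y\in\aa^n_\Gamma(x)$ satisfies $gy\in A_g$ for all $g\in N_n$, whence $d(gx,gy)\leq\diam(A_g)\leq\epsilon$.

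From this inclusion I get $\mu(D_n^\Gamma(x,\epsilon))\geq\mu(\aa^n_\Gamma(x))$. Since $\aa^n_\Gamma$ is a finite partition, the union of its $\mu$-null atoms is $\mu$-null, so for $\mu$-a.e.\ $x$ one has $\mu(\aa^n_\Gamma(x))>0$ for every $n$; this keeps all the quantities below finite a.e. Taking logarithms,
$$-\frac1n\log\mu(D_n^\Gamma(x,\epsilon))\leq -\frac1n\log\mu(\aa^n_\Gamma(x))$$
for $\mu$-a.e.\ $x$, and passing to $\liminf_{n\to\infty}$ gives $\hloc_\mu(x,\epsilon)\leq\liminf_{n\to\infty}\bigl(-\tfrac1n\log\mu(\aa^n_\Gamma(x))\bigr)$. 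The right-hand integrand is nonnegative, so Fatou's lemma together with the standard identity $\int_X\bigl(-\log\mu(\aa^n_\Gamma(x))\bigr)\,d\mu=H_\mu(\aa^n_\Gamma)$ for the information function of a finite partition yields
$$\int_X\hloc_\mu(x,\epsilon)\,d\mu\leq\liminf_{n\to\infty}\frac1n H_\mu(\aa^n_\Gamma)\leq\limsup_{n\to\infty}\frac1n H_\mu(\aa^n_\Gamma)=\th_\mu(T,\aa)\leq\th_\mu(T).$$

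It remains to let $\epsilon\to0$. As $\epsilon$ decreases the ball $D_n^\Gamma(x,\epsilon)$ shrinks, so the functions $\hloc_\mu(\cdot,\epsilon)$ are nonnegative and increase to $\hloc_\mu$ as $\epsilon\searrow0$; by the monotone convergence theorem and the measurability supplied by Lemma~\ref{measurableh},
$$\hloc_\mu(T)=\int_X\hloc_\mu(x)\,d\mu=\lim_{\epsilon\to0}\int_X\hloc_\mu(x,\epsilon)\,d\mu\leq\th_\mu(T),$$
the last inequality holding because each term is bounded by $\th_\mu(T)$, a quantity independent of $\epsilon$.

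The argument is largely bookkeeping, and its one load-bearing ingredient is the partition--ball comparison $\aa^n_\Gamma(x)\subseteq D_n^\Gamma(x,\epsilon)$: it is what lets a quantity defined through covering balls be dominated by the entropy of a finite partition, and it simultaneously provides the a.e.\ positivity that keeps the information functions integrable. The point to watch is that, lacking a Shannon--McMillan--Breiman-type pointwise convergence in this receptive setting, I can only use the soft one-sided estimate furnished by Fatou's lemma; this is precisely why the method controls the \emph{lower} local entropy $\hloc_\mu$ (matching the $\liminf$ in Fatou) and delivers only the inequality $\leq$, leaving the reverse inequality open.
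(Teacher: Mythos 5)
Your proof is correct, and it rests on exactly the same load-bearing ingredient as the paper's: for a finite measurable partition $\aa$ whose members have diameter at most $\epsilon$ (the paper takes $1/m$), the atom of $\aa^n_\Gamma$ containing a point is contained in the dynamic ball $D_n^\Gamma(\cdot,\epsilon)$, hence the measure of the atom is dominated by the measure of the ball. Where the two arguments genuinely diverge is in how this pointwise comparison is converted into an inequality between $\hloc_\mu(T)$ and $\th_\mu(T)$. The paper does not use Fatou: it combines Lemma~\ref{measurableh} with the Lusin/Egorov-type Lemma~\ref{L6.1} to produce a compact set $K$ of almost full measure on which the functions $\hloc_\mu(-,1/m)$ converge uniformly to $\hloc_\mu$, then introduces increasing sets $K_N$ on which the defining $\liminf$ inequality holds uniformly for all $n\geq N$, and finally integrates $-\frac1n\log\mu(D_n^\Gamma(y,1/m))$ atom by atom over $K_N\cap Y$, bounding the result by $-\frac1n\sum_Y\mu(Y)\log\mu(Y)$ up to explicit $\delta$-errors; this quantitative bookkeeping forces a two-case analysis according to whether $\hloc_\mu(T)$ is finite or infinite. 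You instead integrate the pointwise inequality directly: Fatou's lemma applied to the nonnegative information functions $-\frac1n\log\mu(\aa^n_\Gamma(\cdot))$, whose integral is $\frac1n H_\mu(\aa^n_\Gamma)$, gives $\int_X\hloc_\mu(\cdot,\epsilon)\,d\mu\leq\liminf_n\frac1n H_\mu(\aa^n_\Gamma)\leq\th_\mu(T)$, and monotone convergence in $\epsilon$ (using the monotonicity of $\epsilon\mapsto\hloc_\mu(x,\epsilon)$ and the measurability from Lemma~\ref{measurableh}) finishes the proof. Your route is shorter and softer: it needs no Egorov/Lusin machinery, no auxiliary sets $K_N$, and no case split, because Fatou absorbs both the lack of uniform convergence and a possibly infinite value of $\hloc_\mu(T)$; conversely, the paper's uniform-approximation structure buys nothing extra for this particular statement. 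You were also right to flag the two small points that keep the soft argument honest: the a.e.\ positivity of $\mu(\aa^n_\Gamma(x))$ (the union over $n$ of the null atoms is null), and the fact that the partition may depend on $\epsilon$, which is harmless since the final bound $\th_\mu(T)$ does not.
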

\begin{proof}
\textsc{Case 1}: $\hloc_{\mu}(T) < \infty$. 
Let $\delta > 0$, $g= \hloc_{\mu}$ and $g_m=  \hloc_{\mu}(-,1/m)$ for every integer $m>0$. 
Pick $\epsilon > 0$ such that
$$\hloc_\mu(T) = \int_X g(y)\, d\mu \leq \int_K g(y)\, d\mu + \delta$$
whenever $K$ is a compact subset of $X$ with $\mu(X\setminus K) < \epsilon$.

Clearly, $g(x) = \lim_{m\to\infty} g_m(x)$ for all $x \in X$. Since each $g_m$ is measurable by Lemma~\ref{measurableh}, by iterated applications of Lemma~\ref{L6.1}, one can find a compact subset $K$ of $X$ such that all functions $g_m$ and $g$ are continuous on $K$, $g_m$ converges uniformly to $g$ on $K$ and $\mu(X\setminus K) < \epsilon$. Consequently, 
$$\int_X g(y)\, d\mu\leq\int_{K}  g(y)\, d\mu + \delta.$$
Take $m_0 \geq 1$ so large that $g(y) - g_m(y) < \delta$ for all $y \in K$ and all $m \geq m_0$. Then, for all $m \geq m_0$,
$$\int_K g(y)\, d\mu \leq \int_K g_m(y)\, d\mu +\delta.$$

Next, fix an arbitrary integer $m \geq m_0$. Let $\aa = \{A_1,A_2, \ldots, A_k\}$ be a $\mu$-measurable partition of $X$ such that $\diam (A_i) < 1/m$ for all $i\in\{1,2,\ldots,k\}$ (this is straightforward, otherwise see for example \cite[Lemma 8.5]{Wa} for an even stronger statement). For any  integer $N \geq 1$ set 
\begin{equation}
K_{N} = \left\{ x\in K\colon - \frac{1}{n} \log \mu(D_n^\Gamma(x, 1/m)) > \hloc_{\mu}(x,1/m) - \delta, \: \forall \: n \geq N   \right\} .
\end{equation}
This defines an increasing sequence $(K_N)_{N\geq1}$ of subsets of $K$ with $\bigcup_{N\geq 1} K_N = K$. So there exists an integer $N\geq1$ such that
$$\int_K g(y)\, d\mu\leq\int_{K_N}  g(y)\, d\mu +\delta .$$ 
Let $n \geq N$. Then
\begin{align*}
\hloc_\mu(T) & \leq   \int_K g(y)\, d\mu +  \delta \leq \int_{K_N}  g(y)\, d\mu  + 2\delta \leq \int_{K_N}  g_m(y)\, d\mu  + 3\delta\leq \\
& \leq-\frac{1}{n}\int_{K_N}\log \mu(D_n^\Gamma(y, 1/m)) \, d\mu (y) + 4\delta= - \frac{1}{n} \sum_{Y\in \aa^n_\Gamma} \int_{K_N \cap Y} \log \mu(D^\Gamma_n(y, 1/m)) \, d\mu (y) + 4\delta .
\end{align*}
If $Y \in \aa^n_\Gamma$ is such that $Y \cap K_{N} \neq \e$, then  for any $y \in Y\cap K_{N}$ we have $Y \subseteq D_n^\Gamma(y,1/m)$,
so $\mu(Y) \leq \mu(D_n^\Gamma(y,1/m)$. Combining  the above inequalities we obtain
\begin{align*}
\hloc_\mu(T) & \leq  - \frac{1}{n} \sum_{Y\in \aa^n_\Gamma} \int_{K_N \cap Y} \log \mu(D_n^\Gamma(y, 1/m)) \, d\mu (y) + 4\delta\leq \\
& \leq - \frac{1}{n} \sum_{Y\in \aa^n_\Gamma} \int_{K_N \cap Y} \log \mu(Y) \, d\mu + 4\delta \leq   - \frac{1}{n} \sum_{Y\in \aa^n_\Gamma} \mu(Y)\, \log \mu(Y)  + 4\delta .
\end{align*}
Letting $n \to \infty$ we obtain that $\hloc_{\mu}(T) \leq \th_{\mu}(T, \aa) + 4 \delta$,  therefore
$\hloc_{\mu}(T) \leq \th_{\mu}(T) + 4\delta$. This is true for any $\delta > 0,$ so $\hloc_{\mu}(T) \leq \th_{\mu}(T)$.

\smallskip 
\noindent\textsc{Case 2:} $\hloc_{\mu}(T) = \infty$.
Now $\displaystyle \int_X g(x) d\mu = \infty$, where $g=\hloc_{\mu}$.
Let $M > 0$ be an arbitrary (large) number. Using Lemma~\ref{measurableh} and Lemma~\ref{L6.1} as above, there exists a  compact subset $K$ of $X$ such that all functions $g_m$ are continuous on $K$, $g_m$ converges uniformly to $g$ on $K$, and $$\int_{K}  g(y)\, d\mu \geq M .$$
Fix an integer $m_0 \geq 1$ so large that $g_m(x) \geq g(x) -1$ for all $x\in K$ and all $m \geq m_0$.
Define the sets $K_N$ as above with $\delta = 1$. Then again $(K_N)_{N\geq 1}$ is an increasing sequence of subsets of $K$ with $\bigcup_{N\geq 1} K_N = K$, so for a sufficiently large integer $N\geq1$,
$$\int_{K_N}  g(y)\, d\mu \geq \int_K g(y)\, d\mu - 1.$$
Now for $n \geq N$ and $m \geq m_0$, using estimates similar to those above, we get
\begin{align*}
M & \leq    \int_K g(y)\, d\mu  \leq \int_{K_N}  g(y)\, d\mu +1  \leq \int_{K_N}  g_m(y)\, d\mu  + 2=\\
& =     \int_{K_N}   \hloc_{\mu}(y,1/m) \, d\mu  + 2 \leq - \frac{1}{n} \int_{K_N}  \log \mu(D_n^\Gamma(y, 1/m)) \, d\mu (y) + 3 =\\
&=   - \frac{1}{n} \sum_{Y\in \aa^n_\Gamma} \int_{K_N \cap Y} \log \mu(D_n^\Gamma(y, 1/m)) \, d\mu (y) + 3.
\end{align*}
As before, if $Y \cap K_{N} \neq \e$ for some $Y \in \aa^n_\Gamma$, then $Y \subseteq D_n^\Gamma(y,1/m)$ and so $\mu(Y) \leq \mu(D_n^\Gamma(y,1/m)$
for any $y \in Y\cap K_{N}$. Thus, the above inequalities imply
\begin{align*}
M & \leq - \frac{1}{n} \sum_{Y\in \aa^n_\Gamma} \int_{K_N \cap Y} \log \mu(D_n^\Gamma(y, 1/m)) \, d\mu (y) + 3\\
&\leq  - \frac{1}{n} \sum_{Y\in \aa^n_\Gamma} \int_{K_N \cap Y} \log \mu(Y) \, d\mu + 3 \\
& \leq    - \frac{1}{n} \sum_{Y\in \aa^n_\Gamma} \mu(Y)\, \log \mu(Y)  + 3 .
\end{align*}
Letting $n \to \infty$ we obtain that $M \leq \th_{\mu}(T, \aa) + 3 \leq \th_{\mu}(T) + 3$, and  therefore $\th_\mu(T) \geq M-3$.
Letting $M \to \infty$ gives $\th_\mu(T) = \infty = \hloc_{\mu}(T)$.
\end{proof}

The following question arises in a natural way in view of the Brin-Katok Formula and Theorem~\ref{Proposition6.2}.

\begin{question}\label{Ques99}
Let $G$ be a semigroup, $\Gamma=(N_n)_{n\in\Z_+}$ a regular system in $G$, $T\colon G\times X\to X$ a continuous action of $G$ on a compact metric space $X$ and $\mu \in M(X,T)$. Does $\hloc_{\mu}(T)= \th_\mu(T)$ hold under suitable conditions?
\end{question}

Let $T\colon G\times X\to X$ be a continuous action of the semigroup $G$ on a compact metric space $X$, let $\Gamma=(N_n)_{n\in\Z_+}$ be a regular system in $G$ and $\mu$ a $T$-invariant Borel probability measure on $X$.
We denote by $\S_\mu(T,\Gamma)$ the \emph{essential supremum} of $\hloc_{\mu}$, that is,
$$\S_\mu(T,\Gamma)=\sup\{a\geq 0\colon\mu(\{ x \in X\colon \hloc_{\mu}(x,\Gamma) > a\})>0\}.$$
When there is no possibility of confusion we omit $\Gamma$ and write simply $\S_\mu(T)$.

\begin{remark}\label{7.7}
Let $T\colon G\times X\to X$ be a continuous action of the semigroup $G$ on a compact metric space $X$, and let $\Gamma=(N_n)_{n\in\Z_+}$ be a regular system in $G$. Let $\mu$ be a 
 Borel probability measure on $X$.
Put $s(Y):= \inf \{\hloc_{\mu}(x): x\in Y\}$ for  $Y \subseteq X$. Since $\S_\mu(T)> s(Y)$ for every measurable subset $Y$ of $X$ with $\mu(Y) >0$ and $\S_\mu(T)$ is the smallest number with this property, an alternative description of $\S_\mu(T)$ can be 
 $$ \S_\mu(T) = \sup\{s(Y)\colon Y\subseteq X\ \text{measurable},\ \mu(Y) >0\}. $$ 
\end{remark}

We conjecture that a local Variational Principle in the spirit of the one mentioned above by Zheng and Chen holds:

\begin{conjecture}\label{lVPconj}
If $G$ is a semigroup, $\Gamma=(N_n)_{n\in\Z_+}$ a regular system in $G$, $T\colon G\times X\to X$ a continuous action on a compact metric space $X$,  then
\begin{equation}\label{lVP}
\th(T,\Gamma)=\sup\{\underline h_\mu^{loc}(T,\Gamma)\colon \mu\ \text{Borel probability measure on $X$}\}. 
\end{equation}
\end{conjecture}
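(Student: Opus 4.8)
The plan is to split the conjectured equality into the two inequalities and to concentrate all effort on the one that is genuinely open. The inequality
$$\th(T,\Gamma)\ \geq\ \sup\{\hloc_\mu(T,\Gamma)\colon \mu\ \text{Borel probability measure on }X\}$$
is already within reach: for each such $\mu$ one has $\hloc_\mu(T,\Gamma)=\int_X\hloc_\mu(x)\,d\mu\leq \S_\mu(T,\Gamma)$, and a dynamical mass distribution (Frostman-type) estimate on the Pesin-style quantity of Theorem~\ref{Theorem5.5} gives $c(T,X,\Gamma)\geq \S_\mu(T,\Gamma)$; since $b(T,X,\Gamma)=c(T,X,\Gamma)$ by Theorem~\ref{Theorem5.5} and $b(T,\Gamma)\leq\th(T,\Gamma)$ by Remark~\ref{Remark5.4}, the bound follows. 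For $T$-invariant $\mu$ this is precisely Theorem~\ref{Proposition6.1}, and the mass distribution estimate is what extends it to arbitrary Borel $\mu$. Hence the whole content of the conjecture lies in the reverse inequality $\th(T,\Gamma)\leq \sup_\mu\hloc_\mu(T,\Gamma)$.

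For this reverse inequality I would first restrict to the case where $G$ is finitely generated commutative and $\Gamma$ is standard, so that Theorem~\ref{Theorem5.3} and Theorem~\ref{Theorem5.5} yield $\th(T,\Gamma)=b(T,\Gamma)=c(T,X,\Gamma)$. It then suffices to produce, for every $s<c(T,X,\Gamma)$, a Borel probability measure $\mu$ with $\hloc_\mu(x)\geq s$ for $\mu$-almost every $x$, because then $\hloc_\mu(T,\Gamma)=\int_X\hloc_\mu\,d\mu\geq s$, and letting $s\nearrow c(T,X,\Gamma)$ concludes. Fixing $s'$ with $s<s'<c(T,X,\Gamma)$, the definition of $c$ gives $\om_{s'}(X)=\infty$, that is, no cover of $X$ by dynamic balls $D_n^\Gamma(x,\ep)$ is cheap in the weight $\sum_i e^{-s'n_i}$. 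Following Feng and Huang \cite{FH} (and the amenable analogue of Zheng and Chen \cite{ZC}), I would exploit this through a Moran-type construction: a nested sequence of finite families of dynamic balls on which mass is distributed in proportion to $e^{-sn}$ ($n$ being the depth of the ball), followed by a weak$^*$ limit $\mu$. The dynamical mass distribution principle, run in the opposite direction, would then force $\mu(D_n^\Gamma(x,\ep))\lesssim e^{-sn}$ on a set of full $\mu$-measure, i.e. $\hloc_\mu(x,\ep)\geq s$, and letting $\ep\to0$ gives $\hloc_\mu(x)\geq s$ almost everywhere.

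The main obstacle will be the covering lemma underlying the Moran construction. The Feng--Huang argument rests on a Vitali/Besicovitch-type selection lemma for Bowen balls, which in turn relies on the nesting and near-monotonicity of the dynamical metric. In the receptive setting the relevant fact — that $D_n^\Gamma(x,\ep)\cap D_m^\Gamma(y,\ep)\neq\emptyset$ with $n\leq m$ forces $D_m^\Gamma(y,\ep)\subseteq D_n^\Gamma(x,3\ep)$ — does hold when $\Gamma$ is standard, since $e\in N_1$ gives $N_n\subseteq N_{n+1}$ and the triangle inequality applies over $N_n\subseteq N_m$. What must be reworked substantially is the control of the multiplicity of the selected subfamily, together with the identification of the associated \emph{weighted} receptive entropy with $c(T,X,\Gamma)$: here the normalization is by $n$ rather than by $|N_n|$, while the sets $N_n$ may grow polynomially, so the counting estimates of the classical proof no longer balance automatically. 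I expect this combinatorial core — and, beyond it, the passage to non-standard or non-commutative $\Gamma$, for which even the identification $\th=c$ is currently unavailable — to be the decisive difficulty, and it is exactly this point that keeps the statement at the level of a conjecture.
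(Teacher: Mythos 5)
You should first be clear about the status of the statement: in the paper it is a \emph{conjecture} (Conjecture~\ref{lVPconj}), left open, so there is no proof in the paper to compare against; the paper establishes only the inequality $\geq$ in \eqref{lVP}. Your treatment of that inequality is correct and is essentially the paper's own argument: $\underline{h}^{loc}_\mu(T,\Gamma)\leq \underline{S}_\mu(T,\Gamma)$ (integral versus essential supremum of the same function), and $\underline{S}_\mu(T,\Gamma)\leq c(T,\Gamma)=b(T,\Gamma)\leq\th(T,\Gamma)$ by Theorem~\ref{Theorem5.5} and Remark~\ref{Remark5.4}; the ``Frostman-type'' estimate you invoke (a cover of a positive-measure set by dynamic balls each of measure less than $e^{-n_i(S-\delta)}$ cannot have small weight $\sum_i e^{-\lambda n_i}$) is precisely the contradiction argument in the proof of Theorem~\ref{Proposition6.1}. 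One misreading to fix: Theorem~\ref{Proposition6.1} is stated and proved for an \emph{arbitrary} Borel probability measure $\mu$ --- its proof uses only inner regularity, never $T$-invariance --- so there is no missing ``extension to arbitrary Borel $\mu$''; the half-inequality $\geq$ is fully covered by the paper as it stands, with no hypotheses on $G$ or $\Gamma$.

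The genuine gap is, as you yourself say, the reverse inequality, and your proposal does not close it: everything from the Moran-type construction onward is a program, not an argument. Concretely, three things are missing. First, the covering lemma: your nesting observation (that $D_n^\Gamma(x,\ep)\cap D_m^\Gamma(y,\ep)\neq\emptyset$ with $n\leq m$ forces $D_m^\Gamma(y,\ep)\subseteq D_n^\Gamma(x,3\ep)$, valid for standard $\Gamma$ because $e\in N_1$ gives $N_n\subseteq N_m$) is only the first step of the Feng--Huang selection argument; the bounded-multiplicity selection and the counting estimates, now normalized by $n$ rather than $|N_n|$ while $|N_n|$ may grow polynomially, are exactly where the classical proof stops balancing, and you supply no substitute. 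Second, the weak$^*$ limit step: one must show that the limit measure still satisfies $\mu(D_n^\Gamma(x,\ep))\lesssim e^{-sn}$ on a full-measure set, which in \cite{FH,ZC} requires delicate handling of the boundaries of dynamic balls; nothing is said about this. Third, even a complete execution of your plan would prove the conjecture only for finitely generated commutative $G$ with standard $\Gamma$, since the reduction $\th(T,\Gamma)=c(T,X,\Gamma)$ rests on Theorem~\ref{Theorem5.3}, whereas the conjecture concerns arbitrary semigroups and regular systems, for which even $\th=c$ is unknown. So your text is a reasonable research plan, consistent with the paper's own discussion (which cites \cite{FH} for the $\Z_+$ case), but it is not a proof, and the statement remains exactly what the paper says it is: a conjecture.
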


As mentioned above, the validity of the conjecture for $\Z_+$-actions was established by Feng and Huang \cite{FH}.
The following result can be seen as one half of the local Variational Principle \eqref{lVP}, namely, $\hloc_{\mu}(T) \leq \th(T)$ for every Borel probability measure $\mu$. Therefore, in case Question~\ref{Ques99}  has a positive answer, this would provide also one half of the Variational Principle \eqref{7.2}, namely, $ \th_\mu(T) \leq \th(T)$ for every $\mu\in M(X,T)$.

\begin{theorem}\label{Proposition6.1} 
Let $G$ be a semigroup, $\Gamma=(N_n)_{n\in\Z_+}$ a regular system in $G$ and $T\colon G\times X\to X$ a continuous action of $G$ on a compact metric space $X$. Then, for a Borel probability measure $\mu$ on $X$, $\S_{\mu}(T) \leq c(T),$
and therefore $\S_{\mu}(T) \leq \th(T)$. In particular,  $\hloc_{\mu}(T) \leq \th(T)$.
\end{theorem}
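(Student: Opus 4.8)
The plan is to prove the single substantive inequality $\S_\mu(T)\le c(T)$ and then read off the rest from results already at hand. Once $\S_\mu(T)\le c(T)$ is established, Theorem~\ref{Theorem5.5} applied with $Y=X$ gives $c(T)=b(T)$, and Remark~\ref{Remark5.4} gives $b(T)\le\th(T)$; chaining these yields $\S_\mu(T)\le\th(T)$. Finally, since $\S_\mu(T)$ is the essential supremum of the function $\hloc_\mu$, which is measurable by Lemma~\ref{measurableh}, one has $\hloc_\mu(x)\le\S_\mu(T)$ for $\mu$-almost every $x$, whence $\hloc_\mu(T)=\int_X\hloc_\mu\,d\mu\le\S_\mu(T)\le\th(T)$. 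So everything reduces to $\S_\mu(T)\le c(T)$.

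To prove $\S_\mu(T)\le c(T)$ I would fix an arbitrary $a$ with $0\le a<\S_\mu(T)$ and show $a\le c(T)$; letting $a\uparrow\S_\mu(T)$ then finishes. Recalling $c(T)=c(T,X)=\inf\{\lambda\ge0\colon\om_\lambda(X)=0\}$, it suffices to prove $\om_\lambda(X)>0$ for every $\lambda\in[0,a)$, since this forces the defining infimum to be $\ge a$. The first step is to manufacture a \emph{uniform good set}: a number $\epsilon_0>0$, an integer $N_0\ge1$, and a measurable $Z\subseteq X$ with $\mu(Z)>0$ such that
$$\mu(D_n^\Gamma(x,\epsilon_0))<e^{-an}\quad\text{for all }x\in Z\text{ and all }n\ge N_0.$$
This I would obtain by two applications of continuity from below. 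By definition of the essential supremum $\S_\mu(T)$, the set $Y=\{x\colon\hloc_\mu(x)>a\}$ has $\mu(Y)>0$; since $\hloc_\mu(x,\epsilon)$ increases as $\epsilon\downarrow0$ with supremum $\hloc_\mu(x)$, the measurable sets $\{x\colon\hloc_\mu(x,1/m)>a\}$ increase to $Y$, so some $\epsilon_0=1/m_0$ yields a set of positive measure; and on that set the condition $\liminf_n-\frac1n\log\mu(D_n^\Gamma(x,\epsilon_0))>a$ means exactly that $x$ lies in one of the increasing measurable sets $Z_N=\{x\colon\mu(D_n^\Gamma(x,\epsilon_0))<e^{-an}\ \forall n\ge N\}$ (measurable by Lemma~\ref{measurableh}), so some $Z=Z_{N_0}$ has $\mu(Z)>0$.

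The second step is the covering estimate. Fix $\lambda\in[0,a)$, take any $\epsilon\le\epsilon_0/2$ and any $N\ge N_0$, and let $\{D_{n_i}^\Gamma(x_i,\epsilon)\}_{i\in I}$ be an arbitrary finite or countable cover of $X$ with $n_i\ge N$, as in the definition of $\om_{N,\epsilon,\lambda}(X)$. Keeping only the balls that meet $Z$, for each such $i$ I pick $z_i\in D_{n_i}^\Gamma(x_i,\epsilon)\cap Z$ and use the elementary inclusion $D_{n_i}^\Gamma(x_i,\epsilon)\subseteq D_{n_i}^\Gamma(z_i,2\epsilon)\subseteq D_{n_i}^\Gamma(z_i,\epsilon_0)$, which together with $z_i\in Z$ and $n_i\ge N_0$ gives $\mu(D_{n_i}^\Gamma(x_i,\epsilon))<e^{-an_i}$. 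These retained balls still cover $Z$, so $\mu(Z)\le\sum_{i}e^{-an_i}\le\sum_{i}e^{-\lambda n_i}$, the last inequality because $\lambda<a$ and $n_i\ge1$. Taking the infimum over all admissible covers yields $\om_{N,\epsilon,\lambda}(X)\ge\mu(Z)$, and passing to the limits $N\to\infty$ and $\epsilon\to0$ gives $\om_\lambda(X)\ge\mu(Z)>0$, as required.

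The hard part is not any single estimate but the passage from the pointwise hypothesis $\hloc_\mu(x)>a$ to a genuinely uniform bound valid simultaneously over a set of positive measure and over all large $n$; this is precisely what the two continuity-from-below extractions accomplish, and it is where the measurability of $\hloc_\mu(\cdot,\epsilon)$ and of $x\mapsto\mu(D_n^\Gamma(x,\epsilon))$ from Lemma~\ref{measurableh} is needed. The second delicate point is that the centres $x_i$ of the covering balls are only required to lie in $X$, not in the good set $Z$, so the uniform measure bound cannot be applied to them directly; the doubling inclusion $D_{n_i}^\Gamma(x_i,\epsilon)\subseteq D_{n_i}^\Gamma(z_i,2\epsilon)$ with $z_i\in Z$ is what repairs this, at the mild cost of replacing $\epsilon$ by $\epsilon_0/2$.
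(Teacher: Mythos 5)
Your proof is correct, and its skeleton is the same as the paper's: everything is reduced to $\S_\mu(T)\le c(T)$; a positive-measure set on which $\mu(D_n^\Gamma(x,\epsilon_0))<e^{-an}$ holds uniformly for all $n\ge N_0$ is extracted by two continuity-from-below arguments (the paper's sets $K_m$ and $K_{m_0,N}$ play exactly the role of your $\epsilon_0$-level set and your $Z_N$); a covering estimate finishes; and the remaining assertions follow from Theorem~\ref{Theorem5.5}, Remark~\ref{Remark5.4}, and the bound of the integral by the essential supremum, just as in the paper. Where you genuinely diverge is the covering step. The paper first passes, via inner regularity, to a compact subset $K$ (compactness is never really used) and then argues by contradiction on the \emph{relative} quantity $c(T,F)$ for the good set $F$: in the definition \eqref{om} the covers of $F$ have centers in $F$, so the uniform decay bound applies to the centers directly; but one must then pass from $c(T,F)\ge S-\delta$ to $c(T)\ge S-\delta$, and this monotonicity of $c(T,\cdot)$ under inclusion is not immediate from the definition (covers of $X$ have centers in $X$, not in $F$) — it is only implicitly justified, e.g.\ via $c=b$ from Theorem~\ref{Theorem5.5} and the evident monotonicity of $b$; moreover the paper lets $\delta\to0$ in $c(T,F)\ge S-\delta$ although $F$ depends on $\delta$. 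You avoid both wrinkles by bounding $\om_\lambda(X)$ from below directly: the covers of $X$ have unrestricted centers, and your recentering inclusion $D_{n_i}^\Gamma(x_i,\epsilon)\subseteq D_{n_i}^\Gamma(z_i,2\epsilon)\subseteq D_{n_i}^\Gamma(z_i,\epsilon_0)$ with $z_i\in Z$ and $\epsilon\le\epsilon_0/2$ transfers the decay bound to arbitrary centers. The cost is the mild doubling trick (harmless, since $\epsilon\to0$ in the end); the gain is a proof needing neither inner regularity nor any set-monotonicity of $c$, with cleaner handling of the limiting parameters.
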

\begin{proof}
Set $S = \S_{\mu}(T)$ and take a small constant $\delta > 0$.  Then $\mu(Y_0)> 0,$ where $Y_0= \{ x \in X : \hloc_{\mu}(x) > S - \delta\}$. 
Since the measure is inner regular, there exists a compact subset $K$ of $Y_0$ such that $\mu(K)>0$ and 
$$\hloc_{\mu}(x) > S - \delta\ \text{for every}\ x \in K.$$

For every integer $m \geq 1$ set
$$K_m = \left\{ x\in K\colon \liminf_{n\to\infty} - \frac{1}{n} \log \mu(D_n^\Gamma(x, r)) > S - \delta, \: \forall \: r \in (0,1/m] \right\} .$$
The sequence $(K_m)_{m\geq1}$ is increasing and $\bigcup_{m\geq1}K_m = K$, so there exists $m_{0} \geq 1$ with
$\mu(K_{m_{0}}) > \frac{1}{2} \mu(K) > 0$. 

Similarly, for every $N\geq1$ let
$$K_{m_{0},N} = \left\{ x\in K_{m_0}\colon - \frac{1}{n} \log \mu(D_n^\Gamma(x, r))>S-\delta, \: \forall \: n \geq N , \: \forall \: r \in (0,1/m) \right\};$$
the sequence of sets $(K_{m_0,N})_{N\geq1}$ is increasing and $\bigcup_{N\geq1}K_{m_0,N}=K_{m_0}$, so there exists {$N_0 \geq 1$} such that
$$\mu(K_{m_0,N_0}) > \frac{1}{2} \mu(K_{m_0}) > \frac{1}{4} \mu(K) > 0.$$

Set $F=K_{m_0,N_0}$ and assume for a contradiction that $c(T,F) < S- \delta$. Fix an arbitrary $\lambda\in\R$ with
$c(T,F) < \lambda < S-\delta$. Then $\om_{\lambda}(F) = 0$ by the definition of $c(T,F)$. This implies that there exist $\epsilon\in (0,1/m_0)$, 
an integer $N \geq N_0$ and a finite or countable cover $\{D_{n_i}^\Gamma(x_i, \epsilon)\colon i\in I\}$ of $F$ with $x_i \in F$ and 
$n_i \geq N_0$ for all $i\in I$, such that
\begin{equation}\label{6.2}
\sum_{i\in I} e^{-\lambda n_i} < \frac{\mu(F)}{2} .
\end{equation}
Then by the choice of $F$, for all $i\in I$ we have $-  \log \mu(D_{n_i}^\Gamma(x_i, \epsilon)) > n_i (S - \delta)$, so
$\mu(D_{n_i}^\Gamma(x_i,\epsilon)) < e ^{-n_i (S - \delta)}.$
Since $F \subseteq \bigcup_{i\in I} D_{n_i}^\Gamma(x_i,\epsilon)$, it now follows that
$$\sum_{i\in I} e^{-\lambda n_i} > \sum_{i\in I} e^{-(S-\delta) n_i} > \sum_{i\in I} \mu(D_{n_i}^\Gamma(x_i,\epsilon))\geq \mu(F),$$
and this is a contradiction with \eqref{6.2}.

In this way we have proved that $c(T,F) \geq S-\delta$. Letting $\delta \to 0$ {we obtain that} $c(T,F) \geq S$ which in turn
implies $c(T) \geq S$. By Theorem~\ref{Theorem5.5} and Remark~\ref{Remark5.4}, we conclude also that $S \leq c(T) = b(T) \leq \th(T)$.

\smallskip 
The last assertion follows from the inequality  $\hloc_{\mu}(T) \leq \S_{\mu}(T)$ since by definition $\hloc_{\mu}(T)$ and $\S_\mu(T)$ are respectively the integral and the essential supremum of the function $\hloc_\mu\colon X\to \R_{\geq0}$. 
\end{proof}

\begin{remark} 
Theorem \ref{Proposition6.1} is closely related to \cite[Corollary 5.4]{Bis3}, where $G$ is a finitely generated group acting on a compact metric space $X$ and the regular system $\Gamma$ is standard. In the notation of Theorem \ref{Proposition6.1} and Remark~\ref{7.7}, it is proved in \cite[Corollary 5.4]{Bis3} that if $E$ is a Borel subset of $X$ with $\mu(E)>0$, then $s(E)\leq \th(T,E)$. 
Therefore, in view of Remark~\ref{7.7}, $\S_{\mu}(T) \leq  \th(T).$
\end{remark}

The following question arises as very natural in view of Theorem~\ref{Proposition6.1}. 

\begin{question} 
Let $G$ be a semigroup, $\Gamma=(N_n)_{n\in\Z_+}$ a regular system in $G$ and $T\colon G\times X\to X$ a continuous action of $G$ on a compact metric space $X$. Does $c(T)=\sup\{\underline S_{\mu}(T)\colon \mu\ \text{Borel probability measure on $X$}\}$ hold?
Does $\th(T)=\sup\{\underline S_{\mu}(T)\colon \mu\ \text{Borel probability measure on $X$}\}$ hold at least when $G$ is commutative and finitely generated and $\Gamma$ is standard?
\end{question}

In view of the Variational Principle and of Conjecture~\ref{lVPconj} it makes sense to ask whether also the following version of the local Variational Principle holds.

\begin{question}\label{laaaaaaast:question}
If $G$ is a semigroup, $\Gamma=(N_n)_{n\in\Z_+}$ a regular system in $G$, $T\colon G\times X\to X$ a continuous action on a compact metric space $X$ admitting some $T$-invariant Borel probability measure, does
\begin{equation}\label{lVP*}
\th(T,\Gamma)=\sup\{\underline h_\mu^{loc}(T,\Gamma)\colon \mu\in M(X,T)\}?
\end{equation}
\end{question}

In view of Theorem~\ref{Proposition6.2},  this version \eqref{lVP*}  of the local Variational Principle would imply the ``hard half'' (i.e., the inequality $\leq$) of the Variational Principle \eqref{7.2}, as \eqref{lVP*} and Theorem~\ref{Proposition6.2} would give $\th(T)\leq \sup \{ \th_\mu(T)\colon \mu \in M(X,T) \}$. So only the inequality $\geq$ in \eqref{7.2}  (sometimes referred to as ``easy half'') of the Variational Principle  would be missing, now stated as \eqref{geqVP} in Conjecture~\ref{conjVP1/2}.


\begin{thebibliography}{BDGS1}

\footnotesize{

\bibitem[AKM]{AKMc} R. L. Adler, A. G. Konheim, M. H. McAndrew, \emph{Topological entropy}, Trans. Amer. Math. Soc. 114 (1965), 309--319.


\bibitem[B1]{Bis1} A. Bi\'s, \emph{Entropies of a semigroup of maps}, Discr. Cont. Dyn. Sys. 11 (2004), 639--648.


\bibitem[B2]{Bis3} A. Bi\'s, \emph{An analogue of the variational principle for group and semigroup actions}, Ann. de L'Inst. Fourier 63 (2013), 839--863.

\bibitem[BDGS1]{B-S1} A. Bi\'s, D. Dikranjan, A. Giordano Bruno, L. Stoyanov, \emph{Algebraic entropies of commuting endomorphisms of torsion abelian groups}, Rend. Sem. Mat. Univ. Padova 144 (2020), 45--60.

\bibitem[BDGS2]{B-S2} A. Bi\'s,  D. Dikranjan,  A. Giordano Bruno, L. Stoyanov, \emph{Topological entropy, upper capacity and fractal dimensions of finitely generated semigroup actions}, Colloq. Math. 163 (2021), 131--151. 



\bibitem[Bow1]{Bow1} R. Bowen, \emph{Entropy for group endomorphisms and homogeneous spaces}, Trans. Amer. Math. Soc. 153 (1971), 401--414.

\bibitem[Bow2]{Bow2} R. Bowen, \emph{Topological entropy for noncompact sets}, Trans. Amer. Math. Soc. 154 (1973), 125--136.



	
\bibitem[BrK]{BrK} M. Brin, A. Katok, \emph{On local entropy}, In: Geometric Dynamics, Lect. Notes in Maths. vol. 1007, 1983, Springer, Berlin, pp. 30--38.

\bibitem[Ca]{Ca} J. S. C\'anovas, \emph{Topological sequence entropy of interval maps}, Nonlinearity 17 (2004), 49--56.

\bibitem[Con]{Con} J. P. Conze, \emph{Entropie d'un groupe ab\'{e}lien de transformations}, Z. Wahrscheinlichkeitstheorie 25 (1972), 11--30.


\bibitem[De]{De}  F. M. Dekking, \emph{Some examples of sequence entropy as an isomorphism invariant}, Trans. Amer. Mat. Soc. 259 (1980), 167--183.


\bibitem[DFG]{DFG} D. Dikranjan, A. Fornasiero, A. Giordano Bruno, \emph{Algebraic entropy for amenable semigroup actions}, J. Algebra 556 (2020), 467--546.

\bibitem[DG1]{DG1} D. Dikranjan, A. Giordano Bruno, \emph{The connection between topological and algebraic entropy}, Topology Appl. 159 (2012), 2980--2989.

\bibitem[DG2]{DG} D. Dikranjan, A. Giordano Bruno, \emph{Entropy on abelian groups}, Adv. Math. 298 (2016) 612--653.

\bibitem[DG3]{DG3} D. Dikranjan, A. Giordano Bruno, \emph{Entropy on normed semigroups (Towards a unifying approach to entropy)}, Diss. Math. 542 (2019), 1--90.

\bibitem[DGSZ]{DGSZ} D. Dikranjan, B. Goldsmith, L. Salce, P. Zanardo {\it  Algebraic entropy of endomorphisms of abelian groups}, Trans. Amer. Math. Soc. 361 (2009), 3401--3434. 
 
\bibitem[Di]{Di} E. I. Dinaburg, \emph{The relation between topological entropy and metric entropy}, Soviet. Math. Dokl. 11 (1970), 13--16.

\bibitem[E]{E} E. Eberlein, \emph{On topological entropy of semigroups of commuting transformations}, Ast\'erisque 40 (1977), 1--46.

\bibitem[EW]{EW} M. Einsiedler, T. Ward, \emph{Ergodic theory with a view towards number theory}, Graduate Texts in Mathematics 259, Springer-Verlag London, Ltd., London, 2011.

\bibitem[El]{El} S. A. Elsanousi, \emph{A variational principle for the pressure of a continuous $\Z^2$ action}, Amer. J. Math. 99 (1977), 77--100.


\bibitem[FH]{FH} D. Feng, W. Huang, \emph{Variational principles for topological entropies of subsets}, J. Funct. Anal. 263 (2012), 2228--2254.


\bibitem[FrS]{FrS} N. Franzov\'a, J. Smital, \emph{Positive sequence topological entropy characterizes chaotic maps}, Proc. Amer. Math. Soc. 112 (1991), 1083--1086.

\bibitem[GB]{GB} A. Giordano Bruno, {\em A bridge theorem for the entropy of semigroup actions}, Topol. Algebra Appl. 8 (2020), no. 1, 46--57. 

\bibitem[Go1]{Go1} T. N. T. Goodman, \emph{Relating topological and measure entropy}, Bull. London Math. Soc. 3 (1971), 176--180.

\bibitem[Go2]{Go} T. N. T. Goodman, \emph{Topological sequence entropy}, Proc. London Math. Soc. 29 (1974), 331--350.

\bibitem[GLW]{GLW} E. Ghys, R. Langevin, P. Walczak, \emph{Entropie geometrique des feuilletages}, Acta Math. 160 (1988), 105--142.

\bibitem[G]{G} L. W. Goodwyn, \emph{Topological entropy bounds measure-theoretic entropy}, Proc. Amer. Math. Soc. 23 (1969), 679--688.

\bibitem[H]{Hochman} M. Hochman, {\em Slow entropy and differentiable models for infinite-measure preserving $\Z^k$-actions}, Ergod. Theory Dyn. Syst 32 (2012), 653--674.

\bibitem[HS]{HS} K.-H. Hofmann, L. N. Stoyanov, \emph{Topological entropy of group and semigroup actions}, Adv. Math. 115 (1995), 54--98.



\bibitem[KT]{KT} A. Katok, J. P. Thouvenot, \emph{Slow entropy type invariants and smooth realization of commuting measure-preserving transformations}, Ann. Inst. Henri Poincare Probab. Statist. 33 (1997), 323--338. 

\bibitem[KC]{KC}  D. Kong, E. Chen, {\em Slow entropy for noncompact sets and variational principle}, J. Dynam. Differential Equations 26 (2014), no. 3, 477--492. 

\bibitem[Ku]{Ku}  A. Kushnirenko, \emph{On metric invariants of entropy type}, Uspehi Mat. Nauk 22 (1967), 57--66. 

\bibitem[Le]{Le} M. Lema\'nczycz, \emph{The sequence entropy for Morse shifts and some counterexamples}, Studia Math. 82 (1985),  221--241.



\bibitem[LSW]{LSW} D. Lind, K. Schmidt, T. Ward, \emph{Mahler measure and entropy for commuting automorphisms of compact groups}, Invent. Math. 101 (1990), 593--629. 

\bibitem[MaW]{MaW} J. Ma, Z. Wen, \emph{A Billingsley type theorem for Bowen entropy}, C. R. Acad. Sci. Paris 346 (2008), 503--507.


\bibitem[Mi1]{Mi1} M. Misiurewicz, \emph{A short proof of the variational principle for a ${\Z}_{+}^{n}$ action}, Ast\'erisque 40 (1977), 147--158.

\bibitem[Mi2]{Mi2} M. Misiurewicz, \emph{On Bowen's definition of topological entropy}, Discr. and Cont. Dynamical Systems 10 (2004), 827--833.



\bibitem[Oll]{Oll} J. M. Ollagnier, \emph{Ergodic theory and statistical mechanics}, Lecture Notes in Math. 1115, Springer-Verlag, Berlin-Heidelberg-New York, 1985.

\bibitem[OllP]{OllP} J. M. Ollagnier, D. Pinchon, \emph{The variational principle}, Studia Math. 72 (1982), 151--159.



\bibitem[P]{P} Ya. Pesin, \emph{Dimension theory in dynamical systems: contemporary views and applications}, The University of Chicago Press, Chicago 1997.

\bibitem[Pet]{Pet} J. Peters, {\em Entropy on discrete abelian groups},  Adv. Math. 33 (1979), 1--13. 



\bibitem[Ru1]{Ru1} D. Ruelle, \emph{Statistical mechanics on a compact set with $\Z^n$ action}, Trans. Amer. Math. Soc. 185 (1973), 237--252.

\bibitem[Ru2]{Ru2} D. Ruelle, \emph{Thermodynamic formalism}, Addison-Wesley, London-Amsterdam-Don Mills, Ontario-Sydney-Tokio, 1978.

\bibitem[Sch]{Sch} K. Schmidt, \emph{Dynamical systems of algebraic origin}, Progr. Math. 128, Birkh\"auser Zentralblatt Verlag, Basel, 1995.

\bibitem[S]{S} L. Stojanov, \emph{Uniqueness of topological entropy for endomorphisms on compact groups}, Boll. Un. Mat. Ital. B (7) 1 (1987), no. 3, 82--847.

\bibitem[ST]{ST} A. M. Stepin, A. T. Tagi-Zade, \emph{Variational characterization of topological pressure for amenable groups of transformations}, Dokl Akad. Nauk SSSR 254 (1980), 545--549.

\bibitem[T]{T} L. Todorovich, \emph{Entropy of $\Z^N$ actions}, Honours Thesis supervised by L. Stoyanov, University of Western Australia, 2009.


\bibitem[Wal]{Walc} P. Walczak, \emph{Dynamics of Foliations, Groups and Pseudogroups}, Birkh\"auser, 2004.

\bibitem[Wa]{Wa} P. Walters, \emph{Ergodic Theory}, Springer--Verlag, Berlin--Heidelberg--New York, 1980.


\bibitem[W]{W} M. D. Weiss, \emph{Algebraic and other entropies of group endomorphisms}, Math. Systems Theory 8 (3) (1974/75), 243--248.

\bibitem[ZC]{ZC} D. Zheng, E. Chen, \emph{Bowen entropy for actions of amenable groups}, Israel J. Math. 212 (2016), 895--911.
}
\end{thebibliography}
\end{document}